\newtheorem{thm}{Theorem}[section]
\newtheorem{lem}[thm]{Lemma}
\newtheorem{cor}[thm]{Corollary}
\newtheorem{prop}[thm]{Proposition}
\newtheorem{defn}[thm]{Definition}
\newtheorem{problem}[thm]{Problem}
\newtheorem{rem}[thm]{Remark}
\newtheorem{mthm}{Theorem}
\theoremstyle{definition}
\def\CM{\mathcal M}
\def\CH{\mathcal H}
\def\cl{\mathrm{cl}}
\def\scl{\mathrm{scl}}
\def\e{\equiv_P}
\begin{document}

\title[Signatures of surface bundles and stable commutator lengths of Dehn twists]
{Signatures of surface bundles and stable commutator lengths of Dehn twists}

\author[N. Monden]{Naoyuki Monden}
\address{Department of Mathematics, Faculty of Science, Okayama University, Okayama 700-8530, Japan}
\email{n-monden@okayama-u.ac.jp}

\begin{abstract}
The first aim of this paper is to give four types of examples of surface bundles over surfaces with non-zero signature. 
The first example is with base genus 2, a prescribed signature, a 0-section and the fiber genus greater than a certain number which depends on the signature. 
This provides a new upper bound on the minimal base genus for fixed signature and fiber genus. 
The second one gives a new asymptotic upper bound for this number in the case that fiber genus is odd. 
The third one has a small Euler characteristic. 
The last is a non-holomorphic example.

The second aim is to improve upper bounds for stable commutator lengths of Dehn twists by giving factorizations of powers of Dehn twists as products of commutators. 
One of the factorizations is used to construct the second examples of surface bundles. 
As a corollary, we see that there is a gap between the stable commutator length of the Dehn twist along a nonseparating curve in the mapping class group and that in the hyperelliptic mapping class group if the genus of the surface is greater than or equal to $8$. 
\end{abstract}

\maketitle

\setcounter{secnumdepth}{2}
\setcounter{section}{0}

\section{Introduction}\label{section:one}
\subsection{Notation}
In here, we introduce notation. 
Let $\Sigma_g^r$ be a compact oriented surface of genus $g$ with $r$ boundary components, and let $\CM_g^r$ be the mapping class group of $\Sigma_g^r$, that is the group of isotopy classes of orientation preserving self-diffeomorphisms of $\Sigma_g^r$ such that diffeomorphisms and isotopies fix the points of the boundaries. 
For simplicity, we write $\Sigma_g=\Sigma_g^0$ and $\CM_g=\CM_g^0$. 
For a subsurface $\Sigma$ of $\Sigma_g^r$, let $\CM(\Sigma)$ denote the subgroup of $\CM_g^r$ generated by elements whose the restrictions on $\Sigma_g^r-\Sigma$ are identity. 
We denote by $i(a,b)$ the geometric intersection number for two simple closed curves $a$ and $b$ on $\Sigma_g^r$.

For two elements $\phi_1,\phi_2$ in $\CM_g^r$, the notation $\phi_2\phi_1$ means that we first apply $\phi_1$ then $\phi_2$, the conjugation $\phi_2 \phi_1 \phi_2^{-1}$ of $\phi_1$ by $\phi_2$ is denoted by ${}_{\phi_2}(\phi_1)$, and we write $[\phi_1,\phi_2]$ for the commutator of $\phi_1$ and $\phi_2$. 
We denote by $t_c$ the right-handed Dehn twist along a simple closed curve $c$ on $\Sigma_g^r$. 
Since $\CM_g^r$ is generated by Dehn twists \cite{de}, every element can be written as a product of Dehn twists. 
If we consider an element in $\CM_g^r$ without explicit factorization as a product of Dehn twist, then we suppose that its certain factorization is given and fixed.

A surface bundle over a surface is a fiber bundle that the fiber and the base are closed oriented surfaces. 
If the fiber and the base are $\Sigma_g$ and $\Sigma_h$, respectively, then we call this the $\Sigma_g$-bundle over $\Sigma_h$. 
For the total space $X$ of this bundle, we denote by $\sigma(X)$ the signature of $X$. 
We write it simply $\sigma$ when no confusion can arise.

In this paper, we introduce the symbol ``$\e$" in Section~\ref{signature}. 
If the reader is interested only in the results on the (stable) commutator length, then he or she may replace ``$\e$" by ``$=$" and skip Section~\ref{global}, \ref{signature}, \ref{Surface bundles with base of genus two} and~\ref{odd}.

\subsection{Surface bundles over surfaces with non-zero signature}
Even though to consider surface bundles over surfaces is one simple way to get 4-manifolds, many fundamental problems on such bundles remain open. 
Problems about surface bundles with non-zero signature are exemplified as one of them.

Euler characteristics multiply for fiber bundles. 
In contrast, this property does not hold for the signature. 
Equivalently, there is a surface bundle over a surface with non-zero signature. 
Such examples were first exhibited by Atiyah \cite{Atiyah} and, independently, Kodaira \cite{Kodaira}. 
Since then, many examples of surface bundles with nonvanishing signature have been constructed (see e.g. \cite{Hirzebruch,Endo,BDS,BD,Stipsicz,EKKOS,Akhmedov,Lee}).

A $\Sigma_g$-bundle over $\Sigma_h$ gives some restrictions on the signature $\sigma$. 
For example, $\sigma$ must be divisible by $4$, and it vanishes if $h\leq 1$ or $g\leq 2$ using Meyer's signature cocycle and Birman-Hilden's relations \cite{bh} of $\CM_2$ (see \cite{Meyer1,Meyer2}). 
Hence, the case of $g\geq 3$ and $h\geq 2$ is interesting. 
The existence of an example of $g=3$ and $\sigma\neq 0$ was shown in \cite{Meyer1,Meyer2}, and explicit examples were constructed in \cite{Endo,Stipsicz,EKKOS,Lee}. 
In particular, for any integer $n$, there is a $\Sigma_3$-bundle over $\Sigma_h$ with $\sigma=4n$ if $h\geq 7|n|+1$ (see \cite{Lee}). 
An example of $h=2$ and $\sigma\neq 0$, which solves Problem 2.18 (A) in \cite{Kirby}, was first given by Bryan-Donagi \cite{BD}. 
Precisely, it satisfies $g=4k^3-2k^2+1$ and $\sigma=8(k^3-k)/3$ for any integer $k\geq 2$.
Thus, we notice that $g$ and $\sigma$ in the example of $h=2$ take discrete values compared to $h$ and $\sigma$ in the examples of $g=3$. 
If the example of \cite{BD} has a $0$-section (i.e. a section of self-intersection zero), then  the genus of a fiber can extend to $g\geq 4k^3-2k^2+1$ using ``section sum operations". 
However, the author does not know whether it admits a $0$-section or not. 
The motivation for the next result comes from these observations. 
\begin{mthm}\label{thmA}
For any integer $n$, there is a $\Sigma_g$-bundle over $\Sigma_2$ with $\sigma=4n$ if $g\geq 39|n|$. 
In particular, it admits a $0$-section. 
\end{mthm}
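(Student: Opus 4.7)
My approach is to realize the desired bundle as the one coming from a factorization
\[ [\phi_1, \psi_1][\phi_2, \psi_2] = 1 \quad \text{in } \CM_g, \]
which corresponds via the standard dictionary to a monodromy representation $\pi_1(\Sigma_2) \to \CM_g$ and hence to a $\Sigma_g$-bundle $X \to \Sigma_2$. By Meyer's theorem, $\sigma(X)$ is computed from the values of Meyer's signature cocycle $\tau_g$ on the two commutators, so the problem reduces to building such a factorization in which the Meyer values add up to $4n$ while keeping the fiber genus linear in $|n|$.

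The construction I have in mind is to fix a separating curve $c \subset \Sigma_g$ bounding subsurfaces $\Sigma, \Sigma' \subset \Sigma_g$ of roughly equal genus, and to find a commutator factorization $t_c^N = [\phi_1, \psi_1]$ in $\CM(\Sigma)$ together with its ``mirror'' $t_c^{-N} = [\phi_2, \psi_2]$ in $\CM(\Sigma')$. Multiplying, one obtains the identity, while Meyer's cocycle evaluated on each commutator contributes (with the same sign, since both realize positive powers of a positive Dehn twist once reoriented) a signature proportional to $N$. Choosing $N$ linear in $n$ then produces $\sigma = 4n$. The coefficient $39$ in the bound $g \geq 39|n|$ should come from the minimal genus of a subsurface in which the appropriate power $t_c^N$ admits a short commutator expression, tuned via chain relations, lantern-type relations, or known factorizations arising from Lefschetz fibrations.

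For the $0$-section, I would arrange that the factorization is supported inside a proper subsurface $\Sigma_g^1 \subset \Sigma_g$ whose complement is a disk $D$. The center of $D$ is then fixed by every monodromy, yielding a section $s \colon \Sigma_2 \to X$. By further arranging that the supports of $\phi_i, \psi_i$ do not meet a neighborhood of $s$, every monodromy acts as the identity on the tangent space at the fixed point, which forces the normal Euler number of $s$ to vanish; hence $s$ has self-intersection zero.

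The main technical hurdle is the second step: producing the explicit short commutator factorization of $t_c^N$ in a subsurface of genus roughly $\frac{39}{2}|n|$, with Meyer contribution exactly $2n$ per commutator. This is a delicate problem in the combinatorics of the mapping class group, and the constant $39$ is presumably obtained by optimizing over a specific family of algebraic identities (derived from chain relations, or from symmetrized Bryan--Donagi-type blocks), rather than by a direct $\scl$ estimate. Once such a factorization is exhibited, the remaining Meyer-cocycle computation and the verification of the $0$-section condition are formal.
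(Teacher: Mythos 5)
There is a genuine gap, and it sits exactly at the step you flag as ``the main technical hurdle'' plus the sign bookkeeping before it. First, the signature mechanism fails as described. The only evident way to produce the second factorization $t_c^{-N}=[\phi_2,\psi_2]$ supported on the complementary side is to conjugate the first one by an (orientation-reversing) involution exchanging the two sides of $c$; but then the two halves of your bundle over the two one-holed tori are orientation-reversed copies of one another. Applying that involution fiberwise while swapping the two handles of the base gives an orientation-reversing self-diffeomorphism of $X$, so $\sigma(X)=0$: the parenthetical claim ``same sign once reoriented'' is exactly backwards, since reorientation flips the Meyer/signature contribution, and the two halves cancel rather than add. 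The paper's signature does not come from Meyer values of the individual commutators at all: it writes a product of $12n$ lantern relators, $n$ inverse lantern relators and $n$ $2$-chain relators, all with disjoint supports, as exactly two commutators modulo primitive braid relators (Proposition~\ref{prop:10} together with Lemma~\ref{lemma:10} and the permuting diffeomorphism $\Phi$), and then applies the relator-counting formula of Endo--Hasegawa--Kamada--Tanaka (Theorem~\ref{ehktprop0}), giving $\sigma=-7\cdot n+1\cdot 11n=4n$.

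Second, even granting a favorable sign, your scheme cannot keep the base genus equal to $2$. In the mapping class group of the one-sided subsurface of genus $h$ with boundary $c$ one has $\cl_{\CM_h^1}(t_\partial^N)=[(N+3)/2]$ and $\scl_{\CM_h^1}(t_\partial)=1/2$ (this is \cite{bkm}, quoted in this paper), and the inclusion-induced map $\CM_h^1\to\CM_g$ is injective since the complement is not a disk or annulus; hence $t_c^{N}$ is never a single commutator supported on one side once $N\geq 2$, and the number of commutators needed grows linearly in $N\sim n$, forcing the base genus to grow with $n$. The whole difficulty of Theorem~\ref{thmA} is to keep the number of commutators at two while the signature grows, which the paper achieves by packing $n$ disjointly supported copies of the basic relator word $L_1L_2\cdots L_{12}C_{2,1}L^{-1}$ into the same two commutators, not by factorizing powers of a single Dehn twist along a fixed separating curve. (Your $0$-section idea is in the right spirit, but note it is not automatic that the lift to $\CM_g^1$ carries no residual boundary twist; in the paper this is free because the identity between the product of relators and the two commutators is established in $\CM_g^1$ itself.)
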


Meyer \cite{Meyer1,Meyer2} also proved that for every $g\geq 3$ and $n$, there is a $\Sigma_g$-bundle over $\Sigma_h$ with $\sigma=4n$. 
Motivated by this result, Problem~\ref{problem1} below, which is a refined version of Problem 2.18 (A) in \cite{Kirby}, was posed by Endo \cite{Endo}. 
Solving Problem~\ref{problem1} is equivalent to computing the minimal genus of the surfaces representing the $n$ times generator of $H_2(\CM_g;\mathbb{Z})/\mathrm{Tor}$ for fixed $g\geq 3$ and $n$ (see \cite{k1}). 
\begin{problem}[Endo \cite{Endo}]\label{problem1}
Let $h_g(n)$ be the minimal $h$ such that there exists a $\Sigma_g$-bundle over $\Sigma_h$ with $\sigma=4n$. 
Determine the value $h_g(n)$. 
\end{problem}
Upper bounds on $h_g(n)$ were given in \cite{Endo} after the initial work in \cite{Stipsicz,EKKOS,Lee}. 
A sharper bound given by Lee \cite{Lee} is $h_g(n) \leq 5|n|+1$ for $g\geq 6$. 
As a corollary of Theorem~\ref{thmA}, we can compute $h_g(n)$ for the special case and give it's upper bound for $g\geq 39$ by pulling back the bundle to unramified coverings of $\Sigma_2$ of degree $|n|$. 
\begin{cor} 
For any $n$, $h_g(n)=2$ if $g\geq 39|n|$, and $h_g(n)\leq |n|+1$ if $g\geq 39$. 
\end{cor}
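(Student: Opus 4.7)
The plan is to derive both assertions from Theorem~\ref{thmA}; the second will combine Theorem~\ref{thmA} with a pullback along a finite unramified cover of $\Sigma_2$.

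For the equality $h_g(n)=2$ when $g\ge 39|n|$ and $n\ne 0$, I will argue both inequalities. The upper bound $h_g(n)\le 2$ is immediate from Theorem~\ref{thmA}. For the lower bound, I will use the fact already recalled in the introduction that, via Meyer's signature cocycle, every $\Sigma_g$-bundle over a base of genus $\le 1$ has vanishing signature; since $4n\ne 0$, this forces $h_g(n)\ge 2$, and equality follows. (The case $n=0$ is trivial, e.g.\ via a product bundle.)

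For the bound $h_g(n)\le|n|+1$ when $g\ge 39$, the plan is: first, apply Theorem~\ref{thmA} with $n$ replaced by $\mathrm{sgn}(n)\in\{+1,-1\}$ to produce a $\Sigma_g$-bundle $X\to\Sigma_2$ with $\sigma(X)=4\,\mathrm{sgn}(n)$. Next, take a connected unramified degree-$|n|$ cover $p\colon \Sigma_{|n|+1}\to\Sigma_2$; such a cover exists because $\pi_1(\Sigma_2)$ surjects onto $\Z/|n|\Z$, and the Euler characteristic relation $2-2h=|n|\cdot(2-2\cdot 2)$ pins down the genus of the total space to $h=|n|+1$. Then form the pullback $\tilde X=p^\ast X\to\Sigma_{|n|+1}$, which is again a $\Sigma_g$-bundle, and observe that the canonical map $\tilde X\to X$ is a finite unramified $|n|$-fold covering of closed oriented $4$-manifolds. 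By multiplicativity of the signature under such covers (as the signature is the integral of the Hirzebruch $L$-class),
$$\sigma(\tilde X)=|n|\cdot\sigma(X)=4|n|\,\mathrm{sgn}(n)=4n,$$
giving $h_g(n)\le|n|+1$.

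The main obstacle is essentially absent here: all the hard content is packaged into Theorem~\ref{thmA}, and the remaining ingredients (multiplicativity of the signature under finite unramified covers, the vanishing of $\sigma$ for bases of genus $\le 1$, and the existence of cyclic covers of $\Sigma_2$ of every degree) are classical and already noted in the discussion preceding Theorem~\ref{thmA}.
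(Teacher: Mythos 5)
Your argument is correct and is essentially the paper's own (the paper only sketches it in one line): the upper bound $h_g(n)\le 2$ comes from Theorem~A, the lower bound from Meyer's vanishing of $\sigma$ for base genus $\le 1$, and the bound $h_g(n)\le |n|+1$ from pulling back the $\sigma=\pm4$ bundle over $\Sigma_2$ along a degree-$|n|$ unramified cover $\Sigma_{|n|+1}\to\Sigma_2$ and using multiplicativity of the signature under finite covers. The only nitpick is the $n=0$ parenthetical: a product bundle only shows $h_g(0)\le 2$ (in fact $h_g(0)=0$ via the trivial bundle over $S^2$), but this edge case is glossed over in the paper's statement as well.
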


Kotschick \cite{k1} first gave the lower bound on $h_g(n)$. 
The best known bound was obtained by Hamenstadt \cite{Hamenstadt}: $3|n|/(g-1)+1 \leq h_g(n)$. 
Since the upper bound with the same shape as the above lower bound, in which $g$ appears in the denominator, was given in \cite{EKKOS}, we next turn to study the asymptotic behavior of $h_g(n)$. 
This is natural since the base genus and the signature grow linearly in a sequence of bundles by pulling back by covers of the base of a given bundle. 
We consider the following problem posed by Mess (see Problem 2.18 (B) in \cite{Kirby}). 
\begin{problem}[Mess \cite{Kirby}]\label{problem2}
Let $\displaystyle H_g: = \lim_{n\to \infty} \frac{h_g(n)}{n}$. Determine the limit $H_g$. 
\end{problem}
The limit exists and is finite and interpreted as the Gromov-Thurston norm of the generator of $H_2(\CM_g;\mathbb{Z})/\mathrm{Tor}$ (see \cite{k1}). 
The lower bound $3/(g-1)\leq H_g$ is immediately obtained from the result of \cite{Hamenstadt}. 
For any $g\geq 3$, the upper bound on $H_g$ was first given in \cite{EKKOS}. 
This bound was improved as follows: $H_g\leq 6/(g-2)$ for even $g$, $H_g\leq 9/(g-2)$ for $g=3k\geq 6$ and $H_g\leq 14/(g-1)$ for odd $g$ (see \cite{BD,BDS,Lee}). 
Since there is a gap between the even and odd $g$ cases, we fill it. 
\begin{mthm}\label{thmB}
If $g$ is odd, then, for any integer $n$, there is an $\Sigma_g$-bundle over $\Sigma_{6|n|+5}$ with $\sigma=4(g-1)n$. 
Therefore, $H_g \leq 6/(g-1)$ for odd $g$. 
\end{mthm}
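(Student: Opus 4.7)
The plan is to realize the required $\Sigma_g$-bundle over $\Sigma_{6|n|+5}$ from a relation of the form $\prod_{i=1}^{6|n|+5}[a_i,b_i]=1$ in $\CM_g$, obtained by combining an efficient commutator factorization of a power of a Dehn twist (built in the commutator-length half of the paper, as advertised in the abstract) with a fiber-sum operation that cancels the twists. The signature of the resulting total space is then computed via Meyer's signature cocycle.

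First, I would produce---or import from the later sections---a factorization in $\CM_g^1$ of $t_c^{n}$, or a comparable boundary-supported word, as a product of roughly $6|n|+5$ commutators, where $c$ is a nonseparating curve. This is the combinatorial heart of the argument, and it is where the odd-$g$ hypothesis is essential: the hyperelliptic-type relations that give the even-$g$ bound $6/(g-2)$ of Bryan--Donagi and Stipsicz carry a parity obstruction, so one needs a fresh factorization that costs one extra unit in the denominator and accounts for the additive $+5$ coming from a fixed base case.

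Second, I would convert the algebraic factorization into the bundle. Capping off $\Sigma_g^1$ turns the factorization into the monodromy of a Lefschetz fibration over $\Sigma_{6|n|+5}$ whose only singular fibers correspond to $t_c^{n}$; a twisted fiber sum with an auxiliary piece whose monodromy contributes $t_c^{-n}$ (for example, a second copy of the same factorization flipped, or a standard Lefschetz pencil on $\Sigma_g$) cancels the Dehn twists on the monodromy level and leaves a genuine product-of-commutators relation in $\CM_g$. Additivity of signature under fiber sum combined with Meyer's cocycle evaluated on the cancelled pair $t_c^{\pm n}$ yields $\sigma=4(g-1)n$ after routine bookkeeping.

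The main obstacle is Step one: writing a power of $t_c$ as a short product of commutators in $\CM_g$ for arbitrary odd $g$. In even genus one can lean on the hyperelliptic involution, but that symmetry is weaker or unavailable in odd genus, and overcoming this is precisely what the paper's commutator-length machinery is designed for. Once the bundle is in hand, the asymptotic bound follows immediately: with $\sigma=4(g-1)n$ and base of genus $6|n|+5$, one has $h_g((g-1)n)\le 6|n|+5$, hence
\[
H_g=\lim_{m\to\infty}\frac{h_g(m)}{m}\le \lim_{n\to\infty}\frac{6|n|+5}{(g-1)|n|}=\frac{6}{g-1}.
\]
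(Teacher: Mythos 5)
Your overall skeleton (a commutator factorization of a power of a Dehn twist, cancelled against a complementary word so that the identity in $\CM_g$ becomes a product of $6|n|+5$ commutators, followed by signature bookkeeping) is in the right spirit, but the proposal is missing the two quantitative ingredients that actually make the theorem true, and its signature step would not work as described. To get $\sigma=4(g-1)n$ with a base genus depending only on $n$, the Dehn twist power that appears must itself grow linearly in $g$: the paper uses $t_{s_0}^{10(g-1)n}$ expressed, via $(g-1)n$ copies of the $3$-chain relator pushed around $\Sigma_g$ by a rotation of order $g-1$, as a product of only $|n|+3$ commutators (Theorem~\ref{thm:1000}), and, complementarily, $t_{s_0}^{-10(g-1)n}$ expressed from $10(g-1)n$ lantern relators as a product of $5|n|+2$ commutators (Proposition~\ref{prop:lanterncommutator}). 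Your step one, a factorization of $t_c^{n}$ into roughly $6|n|+5$ commutators, has the exponent independent of $g$, so no amount of fiber summing with a piece carrying $t_c^{-n}$ can produce a signature proportional to $(g-1)n$ out of it; the $(g-1)$ factor has to be built into the exponents and into the number of relators consumed, which your plan never arranges. Relatedly, your explanation of the odd-$g$ hypothesis is not the real one: it enters because for $g=2k+1$ the surface decomposes into $k$ genus-$2$ subsurfaces with two boundary circles permuted cyclically by the rotation $r_k$, which is what makes Proposition~\ref{prop:lanterncommutator} work; it has nothing to do with a parity obstruction in hyperelliptic relations.

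The signature computation is also not "routine bookkeeping" localized at the cancelled twists. Meyer's cocycle (equivalently, the relator-counting formula of Endo--Hasegawa--Kamada--Tanaka, Theorem~\ref{ehktprop0}) must be evaluated over the entire monodromy factorization, and different ways of writing the identity as a product of $6|n|+5$ commutators give different signatures; the value $4(g-1)n$ comes precisely from the fact that the combined word consumes $11(g-1)n$ lantern relators and $(g-1)n$ $2$-chain relators (using $C_3\e L\cdot C_2$, Lemma~\ref{lemma:?}), yielding $\sigma=-7(g-1)n+11(g-1)n$. Evaluating the cocycle "on the cancelled pair $t_c^{\pm n}$", or invoking additivity under a fiber sum with "a second copy of the same factorization flipped" (which would tend to cancel the signature rather than produce $4(g-1)n$), does not determine $\sigma$ and would fail. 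Your final limiting argument deducing $H_g\le 6/(g-1)$ from the existence of the bundles is fine, but the existence itself, with the stated signature, is exactly the part your outline does not deliver.
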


We next focus on surface bundles over surfaces with small Euler characteristics. 
The Euler characteristic of a $\Sigma_g$-bundle over $\Sigma_h$ is $4(g-1)(h-1)$. 
The smallest known example is that of \cite{Lee} ($g=3$, $h=8$ and $\sigma=4$). 
We slightly improve it. 
\begin{mthm}\label{thmC}
There exists a $\Sigma_3$-bundle over $\Sigma_7$ with $\sigma=4$ and a $0$-section. 
\end{mthm}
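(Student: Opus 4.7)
The plan is to exhibit an explicit factorization of the identity in $\CM_3^1$ as a product of seven commutators whose total Meyer signature cocycle is $4$. Such a factorization corresponds, via the standard dictionary between relations in the mapping class group and surface bundles with section (compare \cite{EKKOS,Endo,Lee}), to a $\Sigma_3$-bundle over $\Sigma_7$ equipped with a section of self-intersection zero, with signature $\sigma=4$ computed by summing Meyer's cocycle over the seven commutators. The boundary component of $\Sigma_3^1$ gives the section, and the product equalling the identity (rather than a nontrivial power of $t_\partial$) ensures the self-intersection is zero.

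First I would revisit Lee's construction in \cite{Lee} that realizes $\sigma=4$ on a $\Sigma_3$-bundle over $\Sigma_8$ with a $0$-section, i.e., a factorization $1 = \prod_{i=1}^{8}[a_i,b_i]$ in $\CM_3^1$ with the correct Meyer sum. The goal is to consolidate two of these commutators into one, reducing the base genus from $8$ to $7$. To do this, I would look for a sub-product $[a_j,b_j][a_{j+1},b_{j+1}]$ that, via a relation in $\CM_3^1$ (the lantern relation, a chain relation, or one of the specific commutator identities developed elsewhere in the paper for improving stable commutator length bounds), can be rewritten as a single commutator $[a',b']$ whose Meyer contribution equals the sum of the two original ones. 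Alternatively, one can assemble a fresh factorization from ``building blocks'' — short commutator expressions for Dehn twist powers $t_c^{\pm k}$ — arranged so that the twists cancel, the total word has seven commutators, and the signatures add to $4$.

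Having produced a candidate word, the two verifications to carry out are the signature calculation and the $0$-section check. The signature is evaluated commutator by commutator using Meyer's cocycle, which in the hyperelliptic or low-genus setting can be reduced to combinatorial data on the simple closed curves involved; one must confirm the seven contributions sum to $4$. The $0$-section property is verified by lifting the relation to $\CM_3^1$ and checking that no $t_\partial$ factor appears, equivalently that the relation vanishes in the central $\Z$-extension $\CM_3^1\to\CM_3$.

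The main obstacle is the explicit combinatorial search: Lee's length-$8$ construction is already tight, so shaving off a commutator requires either a sharper algebraic identity in $\CM_3^1$ (likely one of the commutator-length improvements proved in this paper) or a substantially new arrangement. Bookkeeping Meyer's cocycle through such a substitution — and ensuring the $0$-section survives — is delicate, but the ingredients needed are all within the scope of the mapping class group techniques the paper develops.
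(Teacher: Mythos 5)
Your proposal is a strategy outline rather than a proof: the entire content of the theorem lies in actually producing a factorization of the identity (or of $t_\partial^0$) in $\CM_3^1$ into seven commutators with the right signature, and this is exactly the step you leave open, conceding that ``the main obstacle is the explicit combinatorial search.'' Neither of your two suggested routes is carried out: you do not exhibit the sub-product of Lee's length-$8$ factorization that merges into a single commutator (nor verify that Lee's example even lifts to $\CM_3^1$ with a $0$-section, which you implicitly assume), and you do not assemble any concrete ``fresh factorization,'' nor perform the Meyer-cocycle bookkeeping you acknowledge is needed to see that the total is $4$. As it stands the argument establishes nothing beyond the (standard, and correctly stated) dictionary between commutator factorizations in $\CM_3^1$ and bundles over $\Sigma_7$ with a $0$-section. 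A minor additional inaccuracy: $\CM_3^1\to\CM_3$ is not a central $\Z$-extension (its kernel is an extension of $\pi_1(\Sigma_3)$ by the central $\Z$ generated by $t_\partial$); the correct criterion, as in Section~\ref{global}, is that the chosen lift of the monodromy equals $t_\partial^k$ with $k=0$.

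For comparison, the paper's proof does not modify Lee's example at all. It builds the monodromy directly from relators: the word ${}_f\bigl((L_1)^6(L_1^\prime)^6\bigr)\,C_{2,1}L^{-1}$ in $\CM_3^1$, i.e.\ twelve lantern relators, one $2$-chain relator and one inverse lantern relator. Proposition~\ref{prop:11-1} (with $n=3$) packages $(L_1)^6(L_1^\prime)^6$ into five commutators times boundary twists, Lemma~\ref{2chainlanterncommutator} packages $C_{2,1}L^{-1}$ into two commutators times $t_{a_1}^8t_{b_1}^4$, and the leftover twist powers $t_{a_1^\prime}^{-6}t_{a_1}^2t_{b_1}^4$ are absorbed into a single commutator via Lemma~\ref{lemma:67} and merged with $[t_\gamma,\psi]$ via Lemma~\ref{lemma:10}, yielding exactly seven commutators in $\CM_3^1$ (hence the $0$-section). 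The signature is then read off from Theorem~\ref{ehktprop0} by counting relators, $\sigma=-7\cdot 1+ (12-1)=4$, rather than by evaluating Meyer's cocycle commutator by commutator. If you want to salvage your plan, you would need to either supply the explicit consolidation of Lee's factorization together with the cocycle computation, or reproduce a relator-based construction of this kind.
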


Finally, we give non-holomorphic examples with non-zero signature. 
Thurston \cite{Thurston} showed that the total space of a $\Sigma_g$-bundle over $\Sigma_h$ is symplectic for $g\geq 2$. 
Then, the following question arises: \textit{for which values of $g$ and $h$ the total space of a $\Sigma_g$-bundle over $\Sigma_h$ with $\sigma\neq 0$ does not admit a complex structure?} 
If a holomorphic surface bundle is isotrivial, then $\sigma=0$ (see \cite{BD}), and there are simple examples with $\sigma=0$ that is non-isotrivial and whose total space can not be complex (see \cite{Baykur}). 
From this, we need the assumption on $\sigma\neq 0$. 
Baykur \cite{Baykur} showed that for any positive integer $N$ and for any $h\geq 3$, there exists $g>N$ such that there are infinite families of (pairwise non-homotopic) 4-manifolds with $\sigma \neq 0$ admitting a $\Sigma_g$-bundle over $\Sigma_h$ and not admitting any complex structure with either orientation (The same holds for any $g\geq 4$ if $h\geq 9$). 
Using Theorem 4 (2) of \cite{Baykur} and Theorem~\ref{thmA}, we see that the same is true for $h=2$ (i.e. the smallest $h$ satisfying $\sigma\neq0$). 
\begin{cor}
For any integer $n$ and for any $g\geq 39|n|+1$, there are infinite families of (pairwise non-homotopic) 4-manifolds with $\sigma=4n$ admitting a $\Sigma_g$-bundle over $\Sigma_2$ and not admitting any complex structure with either orientation. 
\end{cor}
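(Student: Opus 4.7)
The plan is to feed the surface bundles produced by Theorem~\ref{thmA} into the machinery of Theorem~4~(2) of \cite{Baykur}, used as a black box. Theorem~\ref{thmA} supplies, for every integer $n$ and every $g_0\geq 39|n|$, a $\Sigma_{g_0}$-bundle over $\Sigma_2$ with $\sigma=4n$ and a $0$-section; the existence of the $0$-section is exactly the ingredient that Baykur's construction requires as input. The restriction $h\geq 3$ appearing in Baykur's main statement recalled above is present only because no suitable input bundle over $\Sigma_2$ was previously available, so providing such an input via Theorem~\ref{thmA} is precisely what is needed to reach $h=2$.

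Concretely, I would specialize Baykur's construction with base genus $h=2$, initial fiber genus $g_0=39|n|$, signature $4n$, and the $0$-section from Theorem~\ref{thmA}. For every target genus $g\geq 39|n|+1=g_0+1$, the construction then outputs an infinite family of pairwise non-homotopic $4$-manifolds, each admitting a $\Sigma_g$-bundle structure over $\Sigma_2$ with $\sigma=4n$ and none admitting a complex structure with either orientation. The $+1$ shift between the hypotheses of Theorem~\ref{thmA} and those of the corollary is exactly the shift introduced by the section-sum step in Baykur's construction, which raises the fiber genus by one while leaving the base and the signature intact; distinct members of the family are produced via Luttinger-type surgeries on Lagrangian tori supplied by the $0$-section, and they are distinguished by their Seiberg--Witten basic classes, which simultaneously rule out any complex structure with either orientation.

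The only substantive task, and hence the main obstacle, is verifying that the hypotheses of Theorem~4~(2) of \cite{Baykur} are met verbatim by the input from Theorem~\ref{thmA}: in particular, that the $0$-section supplied has the form demanded by the section-sum step, that the quantitative bound $g_0+1\leq g$ coming from that step matches $g\geq 39|n|+1$, and that the degenerate case $n=0$ (where the signature hypothesis of Baykur could fail) is either covered by his statement or can be handled by starting from a trivial bundle $\Sigma_{g_0}\times\Sigma_2$ and performing the analogous surgeries. Once these compatibility checks are carried out, the corollary follows immediately by direct application.
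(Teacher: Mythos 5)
Your proposal follows exactly the paper's (one-line) argument: the corollary is obtained by feeding the $\Sigma_g$-bundles over $\Sigma_2$ with $\sigma=4n$ and a $0$-section from Theorem~\ref{thmA} into Theorem~4~(2) of \cite{Baykur} as a black box, with the shift from $g\geq 39|n|$ to $g\geq 39|n|+1$ accounted for by the fiber-genus increase in Baykur's construction. This is essentially the same approach as the paper, so no further comparison is needed.
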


\subsection{Stable commutator lengths of Dehn twists}\label{scl}
Since the monodromy factorization of a $\Sigma_g$-bundle over $\Sigma_h$ is a factorization of the identity as a product of $h$ commutators in $\CM_g$, techniques constructing commutators and reducing the number of them are required to prove Theorem~\ref{thmA}, \ref{thmB} and \ref{thmC}. 
We apply the techniques of (stable) commutator lengths on $\CM_g$ to the results on surface bundles. 
Especially, Theorem~\ref{thmD} (1) below will be used to show Theorem~\ref{thmB}.

Let $[G,G]$ be the commutator subgroup of a group $G$. 
For $x \in [G,G]$, the \textit{commutator length} $\cl_{G} (x)$ of $x$ is defined to be the smallest number of commutators whose product is equal to $x$. 
The \textit{stable commutator length} $\scl_{G} (x)$ of $x$ is the limit 
\[\scl_{G} (x) = \lim_{ n \to \infty} \frac{ \cl_{G} (x^n ) }{n}. \]
Note that the limit exists. 
We define $\cl_G(x) := \infty$ if $x\notin [G,G]$, $\scl_G(x) := \scl_G(x^k)/|k|$ if $x\notin[G,G]$ but $x^k \in [G,G]$ for some $k$ and $\scl_G(x):=\infty$ if $x^k\notin [G,G]$ for any $k$. 
From the results of \cite{bh} and \cite{po}, $\scl_{\CM_g}(x) <\infty$ for $g\geq 1$. 
Since Dehn twists are the most fundamental generators of $\CM_g$, computing $\cl_{\CM_g}(t_c)$ and $\scl_{\CM_g}(t_c)$ is the natural problem. 
Korkmaz and Ozbagci \cite{ko1} showed that $\cl_{\CM_g}(t_c)=2$ if $g\geq 3$. 
Therefore, our next problem is to the calculate $\cl_{\CM_g}(t_c^n)$ for any $n$ and $\scl_{\CM_g}(t_c)$. 
However, since it is difficult to compute $\cl_G$ and $\scl_G$ in general, it makes sense to give estimates on $\cl_{\CM_g}(t_c^n)$ and $\scl_{\CM_g}(t_c)$.

A lower bound on $\scl_{\CM_g}(t_c)$ was given by Endo-Kotschick \cite{ek}. 
Consequently, $\CM_g$ is not uniformly perfect, and the natural homomorphism from the second bounded cohomology of $\CM_g$ to its ordinary cohomology is not injective, which were conjectured by Morita \cite{m}. 
For technical reasons, they showed that $|n|/(18g-6) + 1 \leq \cl_{\CM_g}(t_c^n)$ for any $n$ if $c$ is a separating curve. 
This gives $1/(18g-6) \leq \scl_{\CM_g}(t_c)$ for a separating curve $c$. 
This assumption was removed by Korkmaz \cite{ko}, and the above results were extended to positive multi twists in \cite{BK}. 
In \cite{ko}, an upper bound on $\scl_{\CM_g}(t_c)$ was also given. 
He showed that $\scl_{\CM_g}(t_c)<2/30$ for a nonseparating curve $c$ if $g\geq 2$. 
On the other hand, there is an estimate $\scl_{\CM_g}(t_c) = O(1/g)$ for any simple closed curve $c$, so $\lim_{g \to \infty} \scl_{\CM_g}(t_c) = 0$ (see \cite{k2} and also \cite{ca}). 
Such an explicit upper bound on $\scl_{\CM_g}(t_c)$ was given in \cite{cms} if $c$ is nonseparating, and in \cite{my} if $c$ is separating. 
However, they don't give an factorization of $t_c^n$ as a product of commutators realizing $\lim_{g\to \infty} \scl_{\CM_g}(t_c)=0$ explicitly.

The purpose is to give sharper upper bounds for stable commutator lengths of Dehn twists giving explicit factorizations of powers of Dehn twists as products of commutators. 
We call a simple closed curve $s$ on $\Sigma_g$ the \textit{separating curve of type $h$} if $s$ separates into two components with genera $h$ and $g-h$ for $h=1,2,\ldots,[\frac{g}{2}]$. 
To state our results, let $s_0$ be a nonseparating curve on $\Sigma_g$ and let $s_h$ a separating curve of type $h$ on $\Sigma_g$. 
Our main results are following. 
\begin{mthm}\label{thmD}
Let $g\geq 2$ and $h\geq 2$. For any integer $n$, we have the following. 
\begin{enumerate}
\item[{\rm (1)}] $\cl_{\CM_g}(t_{s_0}^{10(g-1)n})\leq |n|+3$, and therefore $\scl_{\CM_g}(t_{s_0}) \leq 1/(10g-10)$, 
\item[{\rm (2)}] $\cl_{\CM_g}(t_{s_1}^{5(g-1)n})\leq [(7|n|+9)/2]$, and therefore $\scl_{\CM_g}(t_{s_1}) \leq 7/(10g-10)$, 
\item[{\rm (3)}] $\cl_{\CM_g}(t_{s_h}^{[g/h]n})\leq [(|n|+3)/2]$, and therefore $\scl_{\CM_g}(t_{s_h}) \leq 1/(2[g/h])$. 
\end{enumerate}
In particular, there are factorizations of powers of Dehn twists as products of commutators realizing the above upper bounds for the commutator lengths. 
\end{mthm}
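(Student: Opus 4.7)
The plan is to exhibit, for each curve type $s_h$ on $\Sigma_g$, an explicit factorization of $t_{s_h}^{N_h n}$ as a product of commutators whose count grows linearly in $n$ with the slope required for the claimed bound on $\scl_{\CM_g}(t_{s_h})$. Taking $n\to\infty$ and dividing then gives the stable commutator length bound automatically. The factorization will be built in two stages: first a ``base'' factorization of a fixed power $t_{s_h}^{N_h}$ into a few commutators, and then an amplification scheme that adds only one commutator per additional copy (or, for cases (2) and (3), per two consecutive copies) of $t_{s_h}^{N_h}$.

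For the base factorization, I would start from a positive factorization of $t_{s_h}^{N_h}$ coming from a chain relation in the mapping class group of an appropriate subsurface. In case (1), one takes a filling chain on $\Sigma_g$ together with a hyperelliptic-type subrelation whose overall exponent on a nonseparating curve is $10(g-1)$. In case (3), one embeds $\lfloor g/h\rfloor$ disjoint subsurfaces of genus $h$ with boundary in the class of $s_h$ into $\Sigma_g$ and applies the chain relation of $\Sigma_h^1$ in parallel, so that the boundary twists are conjugate to $t_{s_h}$ and combine to give $t_{s_h}^{\lfloor g/h\rfloor}$. Case (2) is similar but uses a lantern-type substitution on the genus-$1$ side of $s_1$ to promote the exponent to $5(g-1)$. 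In each case, the positive factorization is rewritten as a product of a small constant number of commutators using the Dehn twist toolbox: the braid relation, commutation of disjoint Dehn twists, and the conjugation identity $\phi\, t_c\, \phi^{-1}=t_{\phi(c)}$. This produces the constant terms $3$, $9/2$, and $3/2$ in the respective bounds.

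The amplification step is the crux. Here I would produce an element $\omega\in\CM_g$ and a geometric configuration of curves so that the conjugate $\omega\, t_{s_h}^{N_h}\, \omega^{-1}$ can be combined with the previous $t_{s_h}^{N_h(n-1)}$ factorization at the cost of a single new commutator; iterating gives the linear-in-$n$ growth in case (1), while in cases (2) and (3) the symmetric nature of the configuration lets one share a commutator between two consecutive iterations, yielding the $1/2$ factor in the effective slope. The main obstacle is keeping the commutator count tight: a naive rewriting of the positive factorization would hide additional factors of $g$ in the constants, and the inductive step requires a very specific self-homeomorphism of $\Sigma_g$ that shifts the supporting curves to a disjoint position so the shifted and unshifted factorizations can be telescoped. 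Achieving the precise constants $3$, $9/2$, and $3/2$, rather than larger numbers, is where the technical work concentrates, and it is also the step that dictates the exact leading constants $1/(10g-10)$, $7/(10g-10)$ and $1/(2\lfloor g/h\rfloor)$ in the final stable commutator length bounds.
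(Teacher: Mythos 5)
Your outline borrows the right toolkit (chain and lantern relations on subsurfaces, disjointly supported factorizations, the conjugation identity $\phi t_c\phi^{-1}=t_{\phi(c)}$), but the two steps that actually carry Theorem~\ref{thmD} are missing, and the mechanism you propose for the crucial one would fail. The amplification in $n$ --- which you yourself call the crux --- cannot proceed by conjugating $t_{s_h}^{N_h}$ ``to a disjoint position'' and telescoping: every power of $t_{s_h}$ is supported on the same annulus, and $\omega t_{s_h}^{N_h}\omega^{-1}=t_{\omega(s_h)}^{N_h}$ is a twist along a \emph{different} curve, so there is nothing to telescope in the $n$-direction. A generic base-plus-amplification scheme is also quantitatively doomed: by Theorem~\ref{bavard}, if $t_{s_0}^{10(g-1)}$ is a product of $k$ commutators then the $n$-th power costs $|n|(k-1)+\left[\frac{|n|}{2}\right]+1$ commutators, which has slope at least $3/2$ in $|n|$ once $k\geq 2$ (and $k=1$ is excluded by the known lower bounds), so it can never reach $|n|+3$. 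In the paper the $n$-growth is produced \emph{inside a fixed subsurface} by expanding the $n$-th power of an explicit relator: Proposition~\ref{prop:2} writes $C_3^n$ as $t_b^{12n}$ times $|n|+1$ commutators times boundary twists using Lemmas~\ref{lemma:67} and~\ref{lemma:31}, and Propositions~\ref{separatinggenus1} and~\ref{separatinggenush} obtain the slope $1/2$ by pairing consecutive lantern-conjugates, not by any sharing between iterations of a base factorization.

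The $g$-direction has a second gap. You assert that the boundary twists of $[g/h]$ disjoint parallel copies ``combine to give $t_{s_h}^{[g/h]}$'', but a product of twists along $[g/h]$ \emph{distinct} disjoint curves is not a power of a single twist; the paper needs its key Lemma~\ref{lemma:8} (a rotation cyclically permuting the disjoint curves turns $\prod_j t_{a_j}^{k_j}$ into one twist power at the cost of a single commutator with the rotation) together with Lemma~\ref{lemma:10} and Lemma~\ref{XYYZ} to absorb that extra commutator, which is exactly what keeps the count independent of $g$. Without this step your construction never yields a power of a single Dehn twist at all. Relatedly, for (1) there is no ``filling chain plus hyperelliptic-type subrelation'' giving exponent $10(g-1)$ on a nonseparating curve: the paper gets $10=12-2$ per handle by cyclically chaining $g-1$ three-chain relators so the boundary twists cancel around the cycle, paying one more commutator to trade $t_{b_{g-1}}^{12(g-1)n}t_{d_{g-1}}^{-2(g-1)n}$ for $t_{s_0}^{10(g-1)n}$ (Theorem~\ref{thm:1000}); for (2) the slope is $7/2=5/2+1$, the extra $1$ coming from applying part (1) to the leftover boundary twists $t_{d}^{10(g-1)n}$, so your claimed ``$1/2$ factor'' in case (2) is inconsistent with the stated bound $\left[(7|n|+9)/2\right]$; and the constants $3$, $9/2$, $3/2$ in your sketch are asserted rather than derived. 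As it stands the proposal is a plausible research plan, not a proof.
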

Sharper upper and lower bounds were given in \cite{mo,cms,my} if $g=2$.

Let $\CH_g$ be the hyperelliptic mapping class group of $\Sigma_g$, that is the subgroup of $\CM_g$ consisting of all elements that commute with isotopy class of some fixed hyperelliptic involution. 
Since $\CM_g=\CH_g$ if $g=1,2$, we have $\scl_{\CM_g}\equiv \scl_{\CH_g}$. 
In general, for a subgroup $H$ of a group $G$, we have $\scl_G(x)\leq \scl_H(x)$. 
By $1/(8g+4)\leq \scl_{\CH_g}(t_{s_0})$ (see \cite{mo}) and Theorem~\ref{thmD} (1), we obtain the following corollary. 
\begin{cor}
If $g\geq 8$, then $\scl_{\CM_g}(t_{s_0}) < \scl_{\CH_g}(t_{s_0})$.
\end{cor}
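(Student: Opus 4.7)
The plan is to combine the new upper bound of Theorem~\ref{thmD}~(1) with the previously known lower bound for the hyperelliptic mapping class group in a single chain of inequalities, and then to identify the range of $g$ in which the resulting numerical gap is strict. There is essentially no geometric content beyond what has already been assembled; the corollary is a numerical comparison made possible by the improvement recorded in Theorem~\ref{thmD}~(1).

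Concretely, the first step is to invoke Theorem~\ref{thmD}~(1), which gives
\[
\scl_{\CM_g}(t_{s_0}) \;\leq\; \frac{1}{10g-10}.
\]
The second step is to cite the lower bound from \cite{mo}, namely
\[
\frac{1}{8g+4} \;\leq\; \scl_{\CH_g}(t_{s_0}),
\]
which is valid in the hyperelliptic mapping class group (and whose proof uses representations of $\CH_g$ into a group where the signature cocycle is bounded, so it does not automatically extend to $\CM_g$). The third step is to compare the two bounding fractions: $1/(10g-10) < 1/(8g+4)$ holds if and only if $8g+4 < 10g-10$, i.e.\ $g > 7$. Chaining the three displays under the hypothesis $g\geq 8$ then yields the strict inequality $\scl_{\CM_g}(t_{s_0}) < \scl_{\CH_g}(t_{s_0})$.

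There is no real obstacle to the argument: the only point one has to verify is the threshold coming from the elementary inequality $8g+4<10g-10$, which forces precisely the hypothesis $g\geq 8$ stated in the corollary. What is conceptually interesting, and worth noting in the write-up, is that the threshold is dictated by the improvement from the previously best constant $2/30$ of \cite{ko} to $1/(10g-10)$ in Theorem~\ref{thmD}~(1); without this sharpening the numerical comparison with $1/(8g+4)$ would never be decisive for large $g$, and so the corollary really is a direct payoff of the new factorizations constructed in the proof of Theorem~\ref{thmD}~(1).
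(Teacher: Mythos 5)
Your proposal is correct and matches the paper's argument exactly: the paper deduces the corollary by chaining the upper bound $\scl_{\CM_g}(t_{s_0})\leq 1/(10g-10)$ from Theorem~D~(1) with the lower bound $1/(8g+4)\leq \scl_{\CH_g}(t_{s_0})$ from the cited work, and the elementary comparison $1/(10g-10)<1/(8g+4)$ gives precisely the threshold $g\geq 8$. Nothing further is needed.
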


From \cite{po}, we have $\cl_{\CM_g}(t_c^n) <\infty $ for any $n$ if $g\geq 3$. 
In contrast, $\cl_{\CM_1}(t_{s_0}^n) < \infty$ (resp. $\cl_{\CM_2}(t_{s_0}^n) < \infty$ and $\cl_{\CM_2}(t_{s_1}^n) < \infty$) if and only if $n \equiv 0 \pmod{12}$ (resp. $n \equiv 0 \pmod{10}$ and $n \equiv 0 \pmod{5}$). 
Even though $\scl_{\CM_1}(t_{s_0})= 1/12$ (see Remark 4.5 in \cite{cms}), to my knowledge, $\cl_{\CM_1}(t_{s_0}^{12})$ is still unknown. 
We determine $\cl_{\CM_1}(t_{s_0}^{12n})$. 
It was shown in \cite{ko1} (resp. \cite{ks}) that $t_{s_0}^{10}$ (resp. $t_{s_1}^5$) in $\CM_2$ is written as products of $2$ commutators (resp. $6$ commutators). 
Hence, $\cl_{\CM_2}(t_{s_0}^{10}) \leq 2$ and $\cl_{\CM_2}(t_{s_1}^5)\leq 6$. 
We generalize the results to $10n$ and $5n$ and improve the result of \cite{ks} slightly. 
\begin{mthm}\label{thmE} 
For any integer $n$, we have the following. 
\begin{enumerate}
\item[{\rm (1)}] $\cl_{\CM_1}(t_{s_0}^{12n}) = |n|+1$, 
\item[{\rm (2)}] $\cl_{\CM_2}(t_{s_0}^{10n}) \leq |n|+1$, 
\item[{\rm (3)}] $\cl_{\CM_2}(t_{s_1}^{5n}) \leq [(7|n|+3)/2]$. 
\end{enumerate}
In particular, there are factorizations of powers of Dehn twists as products of commutators realizing the above upper bounds. 
\end{mthm}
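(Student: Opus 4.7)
I treat the two bounds separately. For the lower bound, recall from Remark~4.5 of \cite{cms}, as quoted in the introduction, that $\scl_{\CM_1}(t_{s_0}) = 1/12$. Homogeneity gives $\scl_{\CM_1}(t_{s_0}^{12n}) = |n|$, and the general inequality $\cl_G(x) \geq \scl_G(x) + \tfrac12$ for $x \in [G,G]\setminus\{1\}$ (a consequence of $|\phi([a,b])| \leq D(\phi)$ for homogeneous quasimorphisms $\phi$: writing $x=\prod_{i=1}^k[a_i,b_i]$ gives $|\phi(x)|\leq(2k-1)D(\phi)$, and Bavard duality takes the supremum) yields $\cl_{\CM_1}(t_{s_0}^{12n}) \geq |n|+\tfrac12$, hence $\geq |n|+1$ by integrality. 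For the matching upper bound I would write $t_{s_0}^{12n}$ explicitly as a product of $|n|+1$ commutators in $SL(2,\Z)\cong\CM_1$. A natural route is to start from the torus relation $(t_{s_0} t_b)^6=1$, derive a factorization of $t_{s_0}^{12}$ as two commutators, and then use an amalgamation identity—arranging the factorization so that consecutive copies share a commutator—to ensure that each additional factor of $t_{s_0}^{12}$ contributes exactly one new commutator.

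\textbf{Parts (2) and (3).} Here I would exploit the relations available in $\CM_2$, namely the hyperelliptic relation $(t_{c_1}t_{c_2}t_{c_3}t_{c_4}t_{c_5})^6=1$ and the 3-chain relation $(t_{c_1}t_{c_2}t_{c_3})^4=t_{s_1}$, together with the known seed bounds $\cl_{\CM_2}(t_{s_0}^{10})\leq 2$ of \cite{ko1} and $\cl_{\CM_2}(t_{s_1}^5)\leq 6$ of \cite{ks}. For (2) my plan is to recast the Korkmaz--Ozbagci expression as $t_{s_0}^{10}=[A,B][C_0,D]$ with $C_0$ belonging to a one-parameter family $\{C_k\}$ of mapping classes, so that the telescoped product $\prod_{k=0}^{n-1}\bigl([A,B][C_k,D]\bigr)$ equals $t_{s_0}^{10n}$ and can be repackaged as $|n|+1$ commutators. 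For (3), the appearance of $7/2$ in the bound strongly suggests grouping iterations in pairs: I would produce an explicit factorization of $t_{s_1}^{10}$ as a product of seven commutators using the 3-chain relation together with the hyperelliptic involution, iterate in pairs, and adjust by one extra commutator when $|n|$ is odd, giving exactly $[(7|n|+3)/2]$. The case $n=1$ of this construction already yields $\cl_{\CM_2}(t_{s_1}^5)\leq 5$, the claimed improvement over \cite{ks}, obtained by using the hyperelliptic involution to absorb one of their six commutators.

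\textbf{Main obstacle.} The proof is essentially constructive, so the real difficulty lies in manufacturing factorizations whose length matches the stated counts exactly. Naive concatenation of $n$ copies of a seed yields only linear bounds with a suboptimal constant, and the decisive technical step is to arrange each seed so that successive copies amalgamate—the same idea underlying Culler's construction for $\cl([a,b]^n)$ in a free group, but now realized by concrete mapping classes on $\Sigma_1$ or $\Sigma_2$. The numerical values in the statement track precisely how much sharing the torus, chain, and hyperelliptic relations permit, and the hardest instance is extracting the extra $+1$ saving in part (3) that improves the \cite{ks} bound from $6$ to $5$ at $n=1$ and thereby underlies the slope $7/2$.
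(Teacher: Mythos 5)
Your lower-bound argument in part (1) is fine: $\scl_{\CM_1}(t_{s_0})=1/12$ together with the Bavard-type inequality $\cl_G(x)\geq \scl_G(x)+\tfrac12$ and integrality gives $\cl_{\CM_1}(t_{s_0}^{12n})\geq |n|+1$, and this is essentially the same argument as the paper's (the paper derives the same contradiction from Bavard's theorem on powers of products of $k$ commutators, which is just another route to $\scl\leq \cl-\tfrac12$).

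The genuine gap is that all three upper bounds --- which are the actual content of the theorem, since it asserts explicit factorizations realizing them --- are never constructed. For (1) you say you would factor $t_{s_0}^{12}$ as two commutators and then ``amalgamate'' consecutive copies so each further copy costs one commutator; for (2) you propose telescoping the Korkmaz--Ozbagci expression $t_{s_0}^{10}=[A,B][C_k,D]$ and ``repackaging'' the resulting $2n$ commutators into $|n|+1$; for (3) you propose a seven-commutator expression for $t_{s_1}^{10}$ obtained by letting the hyperelliptic involution absorb a commutator. In each case the step you wave at is precisely the nontrivial one, and nothing in your proposal shows it can be done: there is no general identity that lets $n$ copies of a two-commutator word share commutators, and a product of $2n$ commutators cannot in general be rewritten with $n+1$. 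What makes it work in the paper is a specific mechanism: one takes the $n$-th power of the $3$-chain relator $C_3=t_d^{-1}t_{d'}^{-1}(t_at_bt_c)^4$ and first collects all twist powers onto the single curve $b$, obtaining $C_3^n\e t_a^{4n}t_c^{4n}(t_bt_v)^{2n}t_d^{-n}t_{d'}^{-n}$ with $v=t_at_c(b)$; then $(t_bt_v)^{2n}$ contributes exactly $n$ conjugated commutators (via the conjugation identity of Lemma~\ref{lemma:31} and an element carrying $(b,t_b^{-2}(v))$ to $(v,b)$, fed into Lemma~\ref{lemma:67}), and the block $t_a^{4n}t_c^{4n}t_b^{4n}$ (resp.\ $t_a^{4n}t_c^{4n}t_b^{2n}$) collapses to $t_b^{12n}$ (resp.\ $t_b^{10n}$) times a \emph{single} commutator, because one explicit mapping class takes the relevant pair of curves to another pair. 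Capping off $d,d'$ gives (1); embedding $\Sigma_1^2$ in $\Sigma_2$ with $d=d'$ gives (2). Part (3) in the paper does not use the hyperelliptic involution at all: it uses a lantern relation $t_{c_1}t_{s_{1,1}}t_{z_1}=t_{d_1}t_{d_{g-1}}t_{a_1}^2$ to write $t_{s_{1,1}}^{5n}$ as $[\,|5n|/2\,]+1$ commutators times $t_{d_1}^{10n}$, and then invokes part (2) to absorb $t_{d_1}^{10n}$ with $|n|+1$ further commutators; your pairing heuristic reproduces the numerology but supplies none of the curve-level constructions that justify it. Without these explicit factorizations the proposal does not prove the stated bounds.
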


\subsection{Outline}
The outline of the paper is as follows. 
In Section~\ref{section:2}, we introduce some relators in $\CM_g^r$ and a signature formula for achiral Lefschetz fibrations given by Endo-Hasegawa-Kamada-Tanaka \cite{ehkt}. 
They will be used to compute the signatures of surface bundles over surfaces. 
Section~\ref{lemmas} exhibits techniques to write certain elements as products of commutators and to reduce the number of commutators. 
In Section~\ref{section:two}--\ref{Scl of the Dehn twist along a separating curve}, we give the proofs of the main results.

\vspace{0.1in}
\noindent \textit{Acknowledgements.} 
I wishes to express my gratitude to H. Endo, S. Kamada and K. Tanaka for their explanations on \cite{ehkt} and helpful comments, to A. Akhmedov and R. I. Baykur for their interests and asking me the existence of the bundle in Theorem~\ref{thmA} and to M. Korkmaz for his comments. 
I am especially grateful to H. Endo with whom I discussed the subject matter of this paper. 
The author was supported by Grant-in-Aid for Young Scientists (B) (No. 16K17601), Japan Society for the Promotion of Science.

\section{Relators in mapping class groups and a signature formula}\label{section:2}
In this section, we present the signature formula for achiral Lefschetz fibrations given in \cite{ehkt}. 
When we consider an achiral Lefschetz fibration, we obtain its global monodromy in the mapping class group of the fiber. 
The result in \cite{ehkt} says that we can compute the signature of the total space of the fibration by ``counting the numbers of certain relators" included in the global monodromy.

The outline of this section is as follows. 
We give a brief summary of the global monodromy of an achiral Lefschetz fibration in Subsection~\ref{global}. 
In Subsection~\ref{presentation}, we describe four fundamental relators and the infinite presentation of $\CM_g$ given by Luo \cite{Luo}. 
In Subsection~\ref{signature}, we review the result of \cite{ehkt}.

\subsection{The global monodromy of an achiral Lefschetz fibration}\label{global}
We briefly describe the global monodromy and the section of an achiral Lefschetz fibration. 

Let $g\geq 2$. 
Roughly speaking, a genus-$g$ \textit{achiral Lefschetz fibration} $\pi:X\to \Sigma_h$ is a smooth fibration of a 4-manifold $X$ over $\Sigma_h$ with regular fiber $\Sigma_g$ and finitely many singular fibers. 
The singular fibers are classified two types: \textit{of type $+1$}, and \textit{of type $-1$}. 
Each singular fiber is obtained by collapsing a simple closed curve $v$ on $\Sigma_g$, called the vanishing cycle. 
Note that if $\pi$ has no singular fibers, then it is an $\Sigma_g$-bundle over $\Sigma_h$. 
When we give a genus-$g$ achiral Lefschetz fibration $X\to \Sigma_h$ with $n$ singular fibers of type $\epsilon_1,\epsilon_2,\ldots,\epsilon_n$ whose vanishing cycles are $v_1,v_2,\ldots,v_n$, where $\epsilon_i=\pm1$, we obtain the following relator (up to cyclic permutations), called the \textit{global monodromy} of $\pi$, in $\CM_g$: 
\begin{align}
t_{v_1}^{\epsilon_1} t_{v_2}^{\epsilon_2} \cdots t_{v_n}^{\epsilon_n} [\mathcal{X}_1,\mathcal{Y}_1][\mathcal{X}_2,\mathcal{Y}_2] \cdots [\mathcal{X}_h,\mathcal{Y}_h] = \mathrm{id}. \label{monodromyLF}
\end{align}
Conversely, if we give the above relator, then we get a genus-$g$ achiral Lefschetz fibration $X\to \Sigma_h$ with $n$ singular fibers of type $\epsilon_1,\epsilon_2,\ldots,\epsilon_n$ whose vanishing cycles are $v_1,v_2,\ldots,v_n$.

A genus-$g$ achiral Lefschetz fibration $\pi:X \to \Sigma_h$ with the global monodromy (\ref{monodromyLF}) admits a $(-k)$-section (that is, $s:\Sigma_h \to X$ such that $\pi \circ s = \mathrm{id}_{\Sigma_h}$ and $[s(\Sigma_h)]^2=-k$) if and only if there exists a lift of (\ref{monodromyLF}) from $\CM_g$ to $\CM_g^1$ as 
\[t_{\widetilde{v}_1}^{\epsilon_1} t_{\widetilde{v}_2}^{\epsilon_2} \cdots t_{\widetilde{v}_n}^{\epsilon_n} [\widetilde{\mathcal{X}}_1,\widetilde{\mathcal{Y}}_1][\widetilde{\mathcal{X}}_2,\widetilde{\mathcal{Y}}_2] \cdots [\widetilde{\mathcal{X}}_h,\widetilde{\mathcal{Y}}_h] = t_\partial^{k},\]
where $\partial$ is the boundary curve on $\Sigma_g^1$, $t_{\widetilde{v}_i}$ is a Dehn twist mapped to $t_{v_i}$ under the map $\CM_g^1 \to \CM_g$ induced by the inclusion $\Sigma_g^1 \to \Sigma_g$, and similarly, $\widetilde{\mathcal{X}}_j$ and $\widetilde{\mathcal{Y}}_j$ are mapped to $\mathcal{X}_j$ and $\mathcal{Y}_j$, respectively.

By the result of \cite{ehkt}, the signature of $X$ is determined by ``the numbers of certain relators" of $\CM_g$ included in (\ref{monodromyLF}). 
In the next subsection, we introduce the relators.

\subsection{Luo's infinite presentations of mapping class groups}\label{presentation}
In \cite{ehkt}, they employ an infinite presentation of $\CM_g^r$ given by Luo \cite{Luo}. 
To state it, we introduce four fundamental relators in $\CM_g^r$. 
\begin{defn}\label{relators}\rm 
Let $a,b$ be simple closed curve on $\Sigma_g^r$. 
\begin{itemize}
\item If $a$ is a trivial, then $t_a=\mathrm{id}$, so we call it the \textit{trivial relator} and write \[T:=t_a.\] 

\item Let $c=t_b(a)$. Then, we have the relation $t_c= t_bt_at_b^{-1}$, called the \textit{primitive braid relation}. 
Therefore, we obtain the \textit{primitive braid relator}
\[P:=t_c^{-1}t_bt_at_b^{-1}.\]

\item\label{2chain} Let $a,b$ be simple closed curves on $\Sigma_1^1$ bounded by $c$ with $i(a,b)=1$ as in Figure~\ref{2chaincurves}. 
Then, the \textit{2-chain relation} $t_c=(t_at_b)^6$ holds in $\CM_1^1$.
This gives the \textit{2-chain relator} 
\[C_2 := t_c^{-1} (t_at_b)^6 ,\]

\item\label{lanternrelator} Let $x,y,z$ be the interior curves on $\Sigma_0^4$ as in Figure~\ref{lanterncurves}, and let $a,b,c,d$ be the boundary curves on $\Sigma_0^4$ as in the figure. 
Then, the \textit{lantern relation} $t_a t_b t_c t_d = t_x t_y t_z$ holds in $\CM_0^4$. 
Then, we have the \textit{lantern relator}
\[L := t_d^{-1} t_c^{-1} t_b^{-1} t_a^{-1} t_x t_y t_z .\]
\end{itemize}
\end{defn}
\begin{figure}[htbp]
 \begin{tabular}{c}
  \begin{minipage}{0.5\hsize}
  \centering
       \includegraphics[scale=.20]{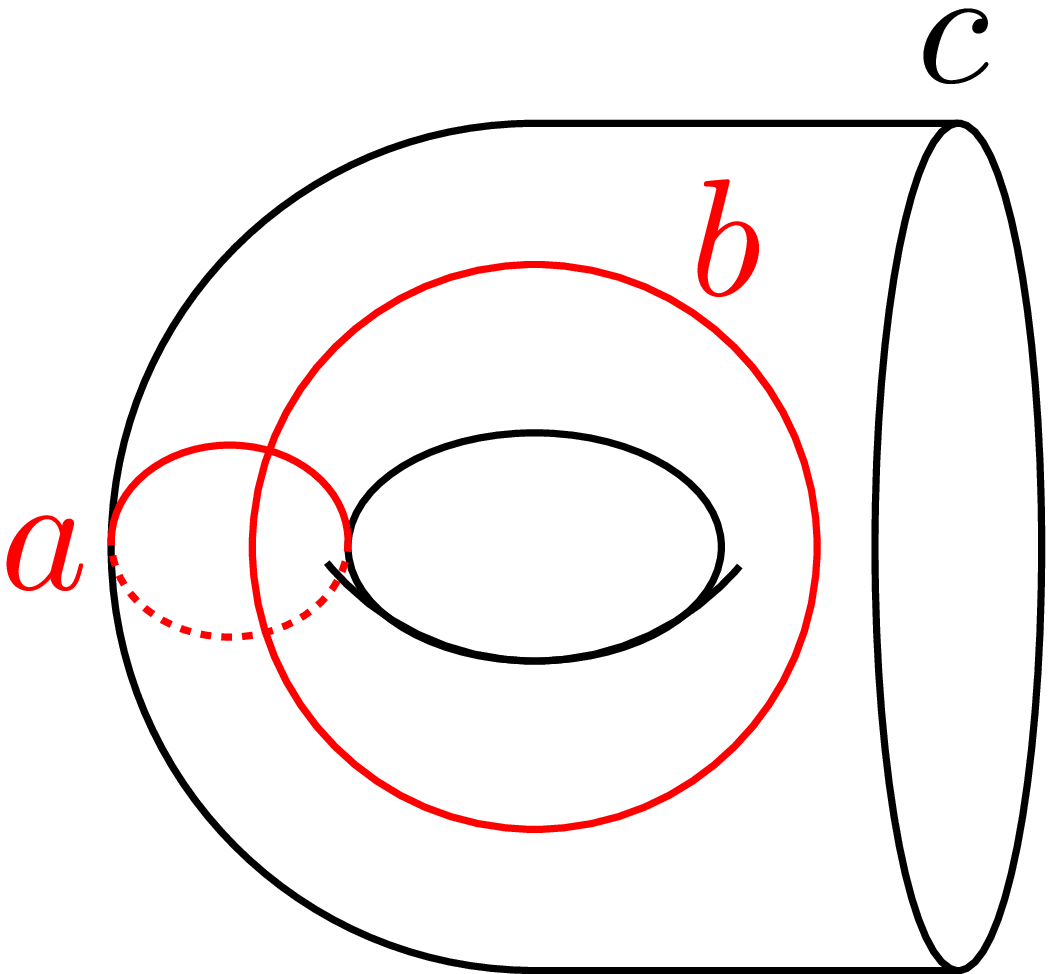}
       \caption{The curves $a,b,c$ on $\Sigma_1^1$.}
       \label{2chaincurves}
 \end{minipage}
 \begin{minipage}{0.5\hsize}
  \centering
       \includegraphics[scale=.20]{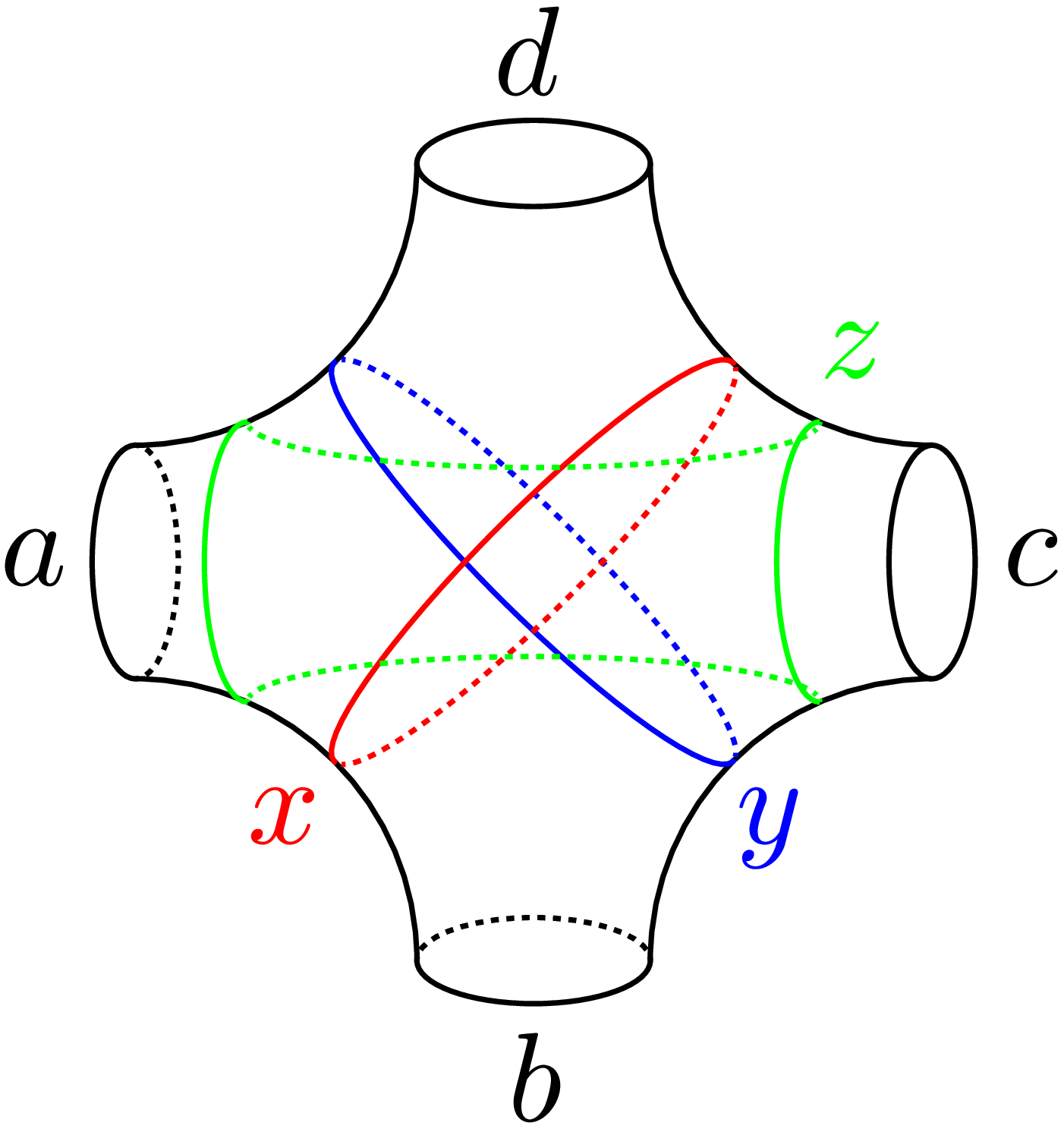}
       \caption{The curves $a,b,c,d,x,y,z$ on $\Sigma_0^4$.}
       \label{lanterncurves}
  \end{minipage}
 \end{tabular}
\end{figure}

Luo \cite{Luo} gave the following infinite presentation of the mapping class group $\CM_g^r$. 
\begin{thm}[\cite{Luo}]\label{Luo}
$\CM_g^r$ has an infinite presentation whose generators are the set of all Dehn twists and whose relators are $T$, $P$, $C_2$ and $L$. 
\end{thm}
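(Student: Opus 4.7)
The plan is to reduce Luo's infinite presentation to a known finite presentation such as Wajnryb's, or better yet to Gervais's infinite presentation of $\CM_g^r$, which already uses the set of all Dehn twists as generators and whose relators are the disjointness relations, the braid relations, the star (chain) relations on every genus-one subsurface with boundary, and the lantern relations on every genus-zero subsurface with four boundary components. Since Dehn's theorem ensures that the set of all Dehn twists generates $\CM_g^r$, the only content of the theorem is that the four families $T,P,C_2,L$ suffice to derive every relation among Dehn twists, and for this it is enough to show that each relator in Gervais's presentation is a consequence of $T,P,C_2,L$.

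First I would dispose of the ``conjugation'' relations. The primitive braid relator $P$ applied to a pair of disjoint curves $a,b$ (so that $t_b(a)=a$) immediately yields $t_a=t_bt_at_b^{-1}$, i.e., the disjointness relation $[t_a,t_b]=1$. Applied to a pair with $i(a,b)=1$, writing $c=t_b(a)$, one has $t_c=t_bt_at_b^{-1}$; combining this with the symmetric application obtained by exchanging the roles of $a$ and $b$ and using that $t_a(b)=t_b^{-1}(a)$ (modulo a conjugation handled by $P$ again), one recovers the braid relation $t_at_bt_a=t_bt_at_b$. The trivial relator $T$ just kills Dehn twists along null-homotopic curves, a bookkeeping device needed whenever a curve in a lantern or $2$-chain configuration happens to degenerate on the ambient surface.

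Next I would show that every chain relation follows from $C_2$ together with $P$ and $L$. The basic $2$-chain relation $C_2$ lives on $\Sigma_1^1$ and expresses $t_c=(t_at_b)^6$ for the standard pair of curves. Longer chain relations can be reduced to this one inductively: given a chain $a_1,\dots,a_k$ on the ambient surface, one uses $P$ to realize conjugations by words in $t_{a_i}$ that transport shorter chain relations into position, and one uses $L$ to split boundary twists of a subsurface into twists along interior curves on a four-holed sphere inside a larger piece. This is the standard ``chain relations from the lantern and the $2$-chain'' inductive argument. Similarly, every instance of the lantern relation on an arbitrary embedded four-holed sphere in $\Sigma_g^r$ is conjugate by a composition of Dehn twists to the model $L$ on $\Sigma_0^4$, because any two embeddings of $\Sigma_0^4$ whose boundary components are in the same mapping class orbit differ by a product of Dehn twists; the equivalence of the two instances is therefore a consequence of $P$.

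The main obstacle I expect is the chain-relation step: bootstrapping arbitrary chain relations from the single $2$-chain relation $C_2$ requires a careful inductive argument that intertwines $C_2$ with $L$ on well-chosen four-holed spheres inside $\Sigma_g^r$ and with repeated applications of $P$ to move everything into a standard position. Once this is in place the verification of Gervais's remaining relators is bookkeeping, and the theorem follows from Gervais's theorem that $\CM_g^r$ is presented by those relators together with Dehn's generation theorem.
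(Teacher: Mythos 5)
The paper offers no proof of this statement at all: Theorem~\ref{Luo} is quoted from Luo's paper \cite{Luo} and used as a black box, so there is no internal argument to compare yours against. Your outline is, in spirit, the route Luo himself takes (he derives his presentation from Gervais's presentation, which in turn rests on Wajnryb's finite presentation), so the strategy is not wrong; but as a proof it has a genuine gap, and the gap is exactly the content of Luo's theorem. The easy half is what you do carry out: Dehn's generation theorem, and the fact that $P$ yields the commuting relations, the braid relations and all conjugation relations (this is Lemma~\ref{lemma:???} and Definition~\ref{primitive} in the paper --- note the braid relation follows at once from $P$ applied with $f=t_at_b$ and $t_at_b(a)=b$, so your symmetric-exchange detour via $t_a(b)=t_b^{-1}(a)$ is unnecessary). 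The hard half is verifying that every remaining relator of Gervais's (equivalently Wajnryb's) presentation --- in particular the star relations supported on genus-one subsurfaces with up to three boundary components, and, in the closed case, the additional relation in Wajnryb's presentation --- is a consequence of $T$, $P$, $C_2$, $L$. You only assert that ``longer chain relations can be reduced to this one inductively'' and call the remaining verification ``bookkeeping''; that inductive interplay of $C_2$, $L$ and $P$ is precisely what occupies Luo's paper and cannot be waved through.

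Two smaller points. First, in the theorem the relators $T$, $P$, $C_2$, $L$ are understood to range over all embedded configurations in $\Sigma_g^r$, so there is no need to reduce an arbitrary lantern to a single model; and such a reduction could not be achieved by conjugation alone in any case, since embedded four-holed spheres fall into several mapping-class orbits distinguished by the topology of their complements, so the clause ``boundary components in the same orbit'' does not cover all instances. Second, if you reduce to Wajnryb's presentation for a closed surface rather than to an infinite presentation for surfaces with boundary, you must also dispose of the relation involving the hyperelliptic element, which is not of chain or lantern type on its face; this again requires argument, not bookkeeping.
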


In the rest of this subsection, we present variations of the primitive braid relator $P$. 
They are used throughout this paper. 
Before it, we give the following lemma. 
\begin{lem}\label{lemma:???}
Let $f$ be a product of $k$ Dehn twists in $\CM_g^r$. 
For $a$ a simple closed curve on $\Sigma_g^r$, $t_{f(a)}^{-1}ft_af^{-1}$ is a product of $k$ primitive braid relators. 
\end{lem}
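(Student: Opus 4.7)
The plan is to prove the lemma by induction on $k \geq 0$, the number of Dehn twists in the factorization of $f$. The base case $k=0$ is vacuous: $f = \mathrm{id}$, so $f(a) = a$ and the expression $t_{f(a)}^{-1} f t_a f^{-1}$ reduces to the empty word. For $k=1$ one writes $f = t_b^{\epsilon}$. When $\epsilon = +1$, the word $t_{t_b(a)}^{-1} t_b t_a t_b^{-1}$ is literally the primitive braid relator $P$ from Definition~\ref{relators}. When $\epsilon = -1$, setting $a' := t_b^{-1}(a)$ and $P' := t_a^{-1} t_b t_{a'} t_b^{-1}$, a direct computation shows $t_{f(a)}^{-1} f t_a f^{-1} = t_{a'}^{-1} t_b^{-1} t_a t_b = t_b^{-1} (P')^{-1} t_b$, which is a conjugate of the inverse of a primitive braid relator.

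For the inductive step I would peel off a single twist from the right: write $f = f' \cdot t_b^{\epsilon}$ with $f'$ a product of $k-1$ Dehn twists, and set $a' := t_b^{\epsilon}(a)$, so $f(a) = f'(a')$. By the induction hypothesis there exist $k-1$ primitive braid relators (in the generalized sense, i.e.\ conjugates and possibly inverses) $Q_1, \ldots, Q_{k-1}$ with
\[
t_{f'(a')}^{-1} f' t_{a'} f'^{-1} = Q_1 Q_2 \cdots Q_{k-1},
\]
and by the base case there is one further primitive braid relator $Q_k$ satisfying $t_b^{\epsilon} t_a t_b^{-\epsilon} = t_{a'} Q_k$. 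Substituting this into $f t_a f^{-1}$ yields
\[
t_{f(a)}^{-1} f t_a f^{-1} = t_{f'(a')}^{-1} f' t_{a'} f'^{-1} \cdot \bigl(f' Q_k f'^{-1}\bigr) = Q_1 \cdots Q_{k-1} \cdot \bigl(f' Q_k f'^{-1}\bigr),
\]
a product of $k$ primitive braid relators, completing the induction.

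The main point that requires care is the exact meaning of ``primitive braid relator'' in the statement. In Luo's infinite presentation (Theorem~\ref{Luo}), $P$ denotes the whole family of such words as $a, b$ and $c = t_b(a)$ vary, and any consequence in the normal closure is expressible as a product of conjugates (and inverses) of these. The $\epsilon = -1$ subcase of the base step and the conjugation by $f'$ in the inductive step both produce conjugates and/or inverses of the defining word, so the lemma must be read in that generalized sense. Apart from this bookkeeping, the argument is a routine computation and no serious obstacle is expected; the main utility of the lemma is that the \emph{count} of relators matches the length of the given factorization of $f$, which is what feeds into the signature formula in Subsection~\ref{signature}.
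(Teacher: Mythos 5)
Your proof is correct and follows essentially the same route as the paper: one primitive braid relator per Dehn twist factor of $f$, with the $\epsilon=-1$ case handled exactly as in the paper (a conjugate of the inverse of the relator $t_{a}^{-1}t_b t_{t_b^{-1}(a)}t_b^{-1}$), the only difference being that you package the telescoping as an induction on $k$ while the paper writes out the explicit conjugators $Q_i^{\epsilon_i}$ directly. The ``generalized sense'' caveat you raise is consistent with the paper's own usage, since its proof likewise counts conjugates and inverses of the defining word $P$ as primitive braid relators.
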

\begin{proof}
Let $f = t_{b_k}^{\epsilon_k} \cdots t_{b_2}^{\epsilon_2} t_{b_1}^{\epsilon_1}$, where $\epsilon_i=\pm 1$ and each $b_i$ is a simple closed curve on $\Sigma_g^r$. 
For simplicity, we set 
$c_0=a$, $c_i=t_{b_i}^{\epsilon_i}(c_{i-1})$ for $i=1,2,\ldots,k$, so $c_k=f(a)$. 
Then, $P_i^{\epsilon_i}=t_{c_i}^{-1}t_{b_i}^{\epsilon_i}t_{c_{i-1}}t_{b_i}^{-\epsilon_i}$ is a primitive braid relator if $\epsilon_i=1$, 
and $P_i^{\epsilon_i}$ is the conjugation of the inverse of the primitive relator $t_{c_{i-1}}^{-1}t_{b_i}t_{c_i}t_{b_i}^{-1}$ by $t_{b_i}^{-1}$ if $\epsilon_i=-1$ since $t_{b_i}(c_i)=c_{i-1}$ from $c_i=t_{b_i}^{-1}(c_{i-1})$. 
Here, we write
\begin{align*}
P_i^{\epsilon_i} &= V_it_{b_i}^{-\epsilon_i} 
\end{align*}
for $i=1,2,\ldots,k$, where $V_i = t_{c_i}^{-1}t_{b_i}^{\epsilon_i}t_{c_{i-1}}$. 
Note that 
\begin{align}
V_k \cdots V_2 V_1&= t_{c_k}^{-1} t_{b_k}^{\epsilon_k} \cdots t_{b_2}^{\epsilon_2} t_{b_1}^{\epsilon_1} t_{c_0} =t_{f(a)}^{-1}ft_a \label{V_1V_2V_k}
\end{align}
Here, let us consider the following conjugation $Q_i^{\epsilon_i}$ of $P_i^{\epsilon_i}$: 
\begin{align*}
Q_i^{\epsilon}&:= (V_k \cdots V_{i+2} V_{i+1}) P_i^{\epsilon_i} (V_k \cdots V_{i+2} V_{i+1})^{-1} \\
&= (V_k \cdots V_{i+2} V_{i+1} V_i) t_{b_i}^{-\epsilon_i} (V_k \cdots V_{i+2} V_{i+1})^{-1}. 
\end{align*}
Then, from (\ref{V_1V_2V_k}) we have 
\begin{align*}
Q_1^{\epsilon_1} Q_2^{\epsilon_2} \cdots Q_k^{\epsilon_k} = V_k \cdots V_2 V_1 t_{b_1}^{-\epsilon_1} t_{b_2}^{-\epsilon_2} \cdots t_{b_k}^{-\epsilon_k} = t_{f(a)}^{-1}ft_af^{-1}. 
\end{align*}
This finishes the proof. 
\end{proof}
From Lemma~\ref{lemma:???}, we can regard the word $t_{f(a)}^{-1}ft_af^{-1}$ as a primitive relator, so we use the same letter $P$ for $t_{f(a)}^{-1}ft_af^{-1}$, and we call the relation $ft_af^{-1}=t_{f(a)}$ the \textit{primitive braid relation} again. 
Moreover, the two well-known relations, called the commutative and the braid relations, are also the primitive braid relations. 
\begin{defn}\label{primitive}\rm
Let $a,b$ be two simple closed curves on $\Sigma_g^r$. 
\begin{itemize}
\item Let $f$ be an element in $\CM_g^r$. Then, we have the \textit{primitive braid relator} $ft_af^{-1}=t_{f(a)}$ and the \textit{primitive relator} 
\[P:=t_{f(a)}^{-1}ft_af^{-1}.\]

\item If $i(a,b)=0$, then $t_b(a)=a$. Therefore, we have the \textit{commutative relation} $t_a t_b = t_b t_a$ in $\CM_g^r$ and the \textit{commutative relator}
\[P:= t_a^{-1}t_bt_at_b^{-1},\] 

\item If $i(a,b)=1$, then $t_at_b(a)=b$. Then, the \textit{braid relation} $t_a t_b t_a = t_b t_a t_b$ holds in $\CM_g^r$. 
This gives the \textit{braid relator}
\[P:= t_b^{-1}t_at_bt_a t_b^{-1}t_a^{-1}.\]
\end{itemize}
\end{defn}

\subsection{A signature formula}\label{signature}
We now present the work of \cite{ehkt}. 
Since (\ref{monodromyLF}) is normally generated by $T,P,C_2,L$ from Theorem~\ref{Luo}, we can count the number of these four relators included in (\ref{monodromyLF}). 
This fact is the key to state the result in \cite{ehkt}. 
\begin{thm}[\cite{ehkt}, Proposition 2.9]\label{ehktprop0}
Let $n^{\pm}(R)$ be the number of a relator $R^{\pm1}$ included in the global monodromy of a genus-$g$ achiral Lefschetz fibration $\pi:X \to \Sigma_h$, where $R=T,P,C_2,L$. 
We set $n(R)=n^+(R)-n^-(R)$. 
Then, we have 
\begin{align*}
\sigma(X)=-n(T)-7n(C_2)+n(L). 
\end{align*}
\end{thm}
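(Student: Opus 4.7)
The plan is to compute $\sigma(X)$ by expressing the global monodromy (\ref{monodromyLF}), which equals the identity in $\CM_g$, as a product of conjugates of the four Luo relators $T$, $P$, $C_2$, $L$, and reading off the signature from the signed totals $n(R) = n^+(R) - n^-(R)$. Since the empty word corresponds to a product bundle with $\sigma = 0$, the problem reduces to determining the signature change $c(R)$ produced by inserting one copy of each Luo relator, and then summing.

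First I would extend Meyer's signature cocycle $\tau_g\colon \CM_g \times \CM_g \to \mathbb{Z}$, which computes signatures of genuine $\Sigma_g$-bundles over surfaces, to a function $\sigma$ on words in Dehn twists and commutators that represent $\mathrm{id} \in \CM_g$, matching the signature of the associated achiral Lefschetz fibration. The key property needed is that if a word $W$ is rewritten as $W' \cdot (gRg^{-1})$ for a Luo relator $R$, then $\sigma(W) = \sigma(W') + c(R)$, with $c(R)$ depending only on $R$. Conjugation invariance follows from the 2-cocycle identity for $\tau_g$; position-independence corresponds to splitting the underlying achiral Lefschetz fibration along a separating loop in the base and applying Novikov additivity of the signature.

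Next I would compute the four coefficients individually. The two elementary cases are $c(P) = 0$, since a primitive braid relation merely relabels vanishing cycles by an ambient fiber-diffeomorphism and therefore preserves the diffeomorphism type of $X$; and $c(T) = -1$, since a Dehn twist along a null-homotopic curve corresponds to a reducible singular fiber whose neighborhood is a $(-1)$-blow-up of a product piece. The two geometric cases are $c(L) = +1$, by the lantern--blowdown correspondence, under which replacing $t_xt_yt_z$ by $t_at_bt_ct_d$ in a positive factorization blows down an exceptional sphere and raises the signature by $1$; and $c(C_2) = -7$, obtained by identifying the 2-chain piece with a fiber-summand of the classical genus-one Lefschetz fibration over $S^2$ with $12$ singular fibers and signature $-8$, from which the single bounding $t_c^{-1}$ separately contributes $-1$, leaving $-7$ for the relator as a whole. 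Summing with signs gives $\sigma(X) = -n(T) - 7\,n(C_2) + n(L)$.

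The main obstacle is the $c(C_2) = -7$ computation: the other three relators admit self-contained local 4-manifold interpretations (ambient isotopy, $(-1)$-blow-up, blow-down of an exceptional sphere), but $C_2$ couples six pairs of interacting Dehn twists on a genus-one subsurface to a separating boundary twist, so isolating the $-7$ requires either an explicit evaluation of Meyer's cocycle on a generating set for $\CM_1^1$ or a direct identification of the associated 4-manifold piece with a neighborhood of a cusp fiber inside the elliptic surface. A secondary technical point is the position-independence of $c(R)$, particularly for $C_2$ whose Dehn twists may interact non-trivially with commutators in the rest of the monodromy; this reduces to a careful application of the cocycle identity together with the vanishing of $\tau_g$ on pairs involving the identity.
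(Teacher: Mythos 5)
First, a point of reference: the paper does not prove Theorem~\ref{ehktprop0} at all --- it is imported verbatim from \cite{ehkt}, where it is established via the chart description of Lefschetz fibrations (cf.\ Remark~\ref{ehktprop0} following the statement in the paper). So your proposal has to be judged as a reconstruction of an external proof. Your route --- defining a per-relator signature contribution $c(R)$, proving insertion/conjugation invariance via Meyer's cocycle and Novikov additivity, and evaluating $c(T)=-1$, $c(P)=0$, $c(L)=+1$, $c(C_2)=-7$ --- is essentially the ``signature of a relator'' framework of Endo--Nagami rather than the chart-theoretic argument of \cite{ehkt}; that is a legitimate alternative, and your four constants are the correct ones (your $E(1)$ bookkeeping $-8=c(T)+c(C_2)$ is exactly the consistency check one wants).

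The genuine gap is that the well-definedness of $c(R)$ is not a technical preliminary but the entire content of the proposition, and your sketch both asserts it and then quietly uses more of it than you acknowledge. Concretely: (i) extending Meyer's cocycle from genuine surface bundles to achiral Lefschetz words already requires the local signature corrections at singular fibers (separating versus nonseparating vanishing cycles contribute differently), and the Novikov-additivity argument needs the inserted conjugate $gRg^{-1}$ to sit over a subdisk of the base, i.e.\ to be a contiguous subword, which must be arranged and shown not to affect the outcome; (ii) more seriously, your evaluation of $c(C_2)$ is performed in a degenerate model: you cap off the boundary curve $c$, so the computation takes place at fiber genus one, where $C_2$ collapses to the relator $(t_at_b)^6$ of $\CM_1$ and $t_c$ becomes a trivial relator. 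In the situation of the theorem ($g\geq 2$) the curve $c$ is an essential separating curve in the fiber, and transporting the value $-7$ from the capped-off genus-one model to arbitrary $g$ is precisely the independence-of-fiber-genus (stabilization) statement you have not established --- this is what Endo--Nagami's intrinsic cocycle definition of the relator signature, or the chart stabilization machinery of \cite{ehkt}, is for. A minor slip in the same direction: a cusp fiber neighborhood (monodromy $t_at_b$) is not an adequate substitute model for $C_2$; the correct closed model is the elliptic fibration $E(1)$ with its twelve singular fibers, as in your main computation. Until (ii) is filled by either an honest evaluation of the cocycle sum for the word $t_c^{-1}(t_at_b)^6$ on $\Sigma_1^1\subset\Sigma_g$ or a stabilization argument, the proposal is a plausible plan rather than a proof.
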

\begin{rem}
Originally, Proposition 2.9 in \cite{ehkt} is stated in terms of a graphical method, called the ``chart" description. 
\end{rem}

From Theorem~\ref{ehktprop0}, we notice that primitive braid relators are not needed for the computation of $\sigma(X)$. 
Equivalently, if we have an achiral Lefschetz fibration $\pi':X'\to \Sigma_h$ with the monodromy obtained by applying primitive braid relations to that of $\pi:X\to \Sigma_h$, then $\sigma(X)=\sigma(X^\prime)$ holds. 
For this reason, we introduce the following notation.
\begin{defn}\label{substitution}\rm
Let $P$ be a primitive braid relator in $\CM_g^r$. 
\begin{itemize}
\item Let $V$ and $V'$ be elements in $\CM_g^r$ with $V'V^{-1}=P^\epsilon$, where $\epsilon=\pm 1$. 
Set
\begin{align*}
W  &:= U_1 V U_2, \\
W' &:= U_1 V' U_2, 
\end{align*}
where $U_1$ and $U_2$ are elements in $\CM_g^r$. 
Then, we can construct $W'$ from $W$ using $P$ as follows: 
\begin{align*}
(U_1 P^\epsilon U_1^{-1}) W = (U_1 P^\epsilon U_1^{-1}) U_1 V U_2  = U_1 V' U_2 = W'.
\end{align*}
When $W'$ is obtained from $W$ by applying a sequence of the above operations (i.e. by using the primitive braid relations), we denote it by
\[W \e W'.\]

\item We say that \textit{$W$ can commute with $W'$ modulo $P$} if the next relation holds:
 \[W \cdot W' \e W' \cdot W.\] 

\item Let $W_1,W_2,\ldots,W_n$ be elements in $\CM_g^r$. If the relation 
\[W_1 W_2 \cdots W_{n-1}W_n \e W_n W_1 W_2 \cdots W_{n-1} \]
holds, then we call it a \textit{cyclic permutation}. 
\end{itemize}
\end{defn}

Remark~\ref{fundamental} below collects fundamental properties of Definition~\ref{substitution}. We will use it (without specifying) repeatedly. 
\begin{rem}\label{fundamental}
Let $f, X_1, X_2$ be elements in $\CM_g^r$, and let $a,a_1,a_2,\ldots,a_k$ be simple closed curves on $\Sigma_g^r$. 
We follow the notation of Definition~\ref{substitution}. 
\begin{enumerate}
\item[{\rm (1)}] For a primitive braid relator $P=t_{f(a)}^{-1}ft_af^{-1}$, we set $V=t_{f(a)}$, $V^\prime=ft_{a}f^{-1}$, $U_1 = X_1$, $U_2= X_2$. 
Then, we have
\[X_1 \cdot t_{f(a)} \cdot X_2 \e X_1 \cdot f t_a f^{-1} \cdot X_2.\]
\item[{\rm (2)}] For a primitive braid relator $P=t_{f(a)}^{-1}ft_af^{-1}$, we set $V=f$, $V^\prime=t_{f(a)}^{-1}ft_a$, $U_1 = t_{f(a)}$, $U_2= \mathrm{id}$. 
Then, we have
\[X_1 \cdot t_{f(a)} \cdot f \cdot X_2 \e X_1 \cdot f \cdot t_a \cdot X_2,\]
In particular, for any element $f$, the Dehn twist along a boundary curve $\partial$ of $\Sigma_g^r$ can commute with $f$ modulo $P$ from $f(\partial)=\partial$. 

\item[{\rm (3)}] A cyclic permutation always holds for a relator $R$ for the following reason: 
if we set $R=t_{a_1}^{\epsilon_1} t_{a_2}^{\epsilon_2} \cdots t_{a_k}^{\epsilon_k}$, where $\epsilon_i=\pm 1$, then $t_{a_k}^{-1}Rt_{a_k}R^{-1}$ is a primitive braid relator from $R(a_k)=a_k$. Therefore, for $P^{-1}=Rt_{a_k}^{-1}R^{-1}t_{a_k}$, when we set $V=R$, $V'=t_{a_k}Rt_{a_k}^{-1}$, $U_1=t_{a_k}$ and $U_2=\mathrm{id}$, we have 
\[t_{a_1}^{\epsilon_1} t_{a_2}^{\epsilon_2} \cdots t_{a_{k-1}}^{\epsilon_{k-1}} t_{a_k}^{\epsilon_k} \e t_{a_k}^{\epsilon_k} \cdot t_{a_1}^{\epsilon_1} t_{a_2}^{\epsilon_2} \cdots t_{a_{k-1}}^{\epsilon_{k-1}}\]
\end{enumerate}
\end{rem}

\section{Lemmas}\label{lemmas}
This section exhibits techniques to prove the main results.

From Section~\ref{section:2}, we see that we need to write relators as a product of commutators. 
The next lemma will be useful for constructing commutators. 
The special cases were used in \cite{harer}, \cite{ko1} and \cite{bkm}. 
\begin{lem}\label{lemma:67}
Let $a_1,a_2,\ldots,a_n$ and $b_1,b_2,\ldots,b_n$ be simple closed curves on $\Sigma_g^r$. If there is an element $f$ in $\CM_g^r$ mapping $(a_1,a_2,\ldots,a_n)$ to $(b_1,b_2,\ldots,b_n)$, then 
for any integers $k_1,k_2,\ldots,k_n$, the following holds:
\[t_{a_1}^{k_1} t_{a_2}^{k_2} \cdots t_{a_n}^{k_n} \ t_{b_n}^{-k_n} \cdots t_{b_2}^{-k_2} t_{b_1}^{-k_1} \e [t_{a_1}^{k_1} t_{a_2}^{k_2} \cdots t_{a_n}^{k_n}, f].\]
\end{lem}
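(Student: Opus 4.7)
The plan is to convert the trailing $t_{b_i}^{-k_i}$ factors into $f$-conjugates of the leading $t_{a_i}^{k_i}$ factors by means of the primitive braid relation, and let a telescoping of $f^{-1} \cdot f$ pairs collapse the result into a commutator.

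First I would establish, for each $i = 1,\ldots,n$ and each integer $k$, the auxiliary identity
\[
t_{b_i}^{k} \e f \, t_{a_i}^{k} f^{-1}.
\]
Since $f(a_i) = b_i$, the word $t_{b_i}^{-1} f t_{a_i} f^{-1}$ is a primitive braid relator, and Remark~\ref{fundamental}~(1) therefore allows a single occurrence of $t_{b_i}$ to be replaced by $f t_{a_i} f^{-1}$ in any product; the analogous relator with $t_{a_i}, t_{b_i}$ replaced by $t_{a_i}^{-1}, t_{b_i}^{-1}$ handles the inverse substitution. Writing $t_{b_i}^{k} = t_{b_i} \cdots t_{b_i}$ (or $t_{b_i}^{-1} \cdots t_{b_i}^{-1}$ if $k<0$) and iterating this move $|k|$ times, with the intermediate $f^{-1} \cdot f$ pairs cancelling in $\CM_g^r$, yields the displayed identity.

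Second, I would concatenate these $n$ independent substitutions on the tail of the word, obtaining
\[
t_{b_n}^{-k_n} \cdots t_{b_2}^{-k_2} t_{b_1}^{-k_1} \e f \bigl( t_{a_n}^{-k_n} \cdots t_{a_2}^{-k_2} t_{a_1}^{-k_1} \bigr) f^{-1},
\]
where again the $n-1$ interior $f^{-1} f$ pairs collapse. Splicing this back into the full product gives
\[
t_{a_1}^{k_1} \cdots t_{a_n}^{k_n} \cdot t_{b_n}^{-k_n} \cdots t_{b_1}^{-k_1} \e \bigl( t_{a_1}^{k_1} \cdots t_{a_n}^{k_n} \bigr) \, f \, \bigl( t_{a_n}^{-k_n} \cdots t_{a_1}^{-k_1} \bigr) \, f^{-1},
\]
and the right-hand side is exactly $[ t_{a_1}^{k_1} \cdots t_{a_n}^{k_n} , f ]$, since $(t_{a_1}^{k_1} \cdots t_{a_n}^{k_n})^{-1} = t_{a_n}^{-k_n} \cdots t_{a_1}^{-k_1}$.

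The only substantive ingredient is the single primitive braid relation $f t_{a_i} f^{-1} = t_{b_i}$; everything else is bookkeeping. The main obstacle is therefore just to confirm that every manipulation lies strictly inside the equivalence $\e$ of Definition~\ref{substitution} — that each replacement $t_{b_i}^{\pm 1} \leftrightarrow f t_{a_i}^{\pm 1} f^{-1}$ is one application of Remark~\ref{fundamental}~(1), and that the $f^{-1} f$ telescopings do not secretly use a relator of type $T$, $C_2$ or $L$. Both are automatic, so the equivalence is realized by a sequence of primitive braid moves only, which is exactly what is needed for the signature count of Theorem~\ref{ehktprop0}.
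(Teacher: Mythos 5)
Your proof is correct and follows essentially the same route as the paper: rewrite each $t_{b_i}^{-k_i}=t_{f(a_i)}^{-k_i}$ as $f\,t_{a_i}^{-k_i}f^{-1}$ via the primitive braid relation (the paper compresses your iteration into the identity $(ft_{a_i}f^{-1})^{-k_i}=ft_{a_i}^{-k_i}f^{-1}$), let the interior $f^{-1}f$ pairs cancel, and recognize the result as $[t_{a_1}^{k_1}\cdots t_{a_n}^{k_n},f]$. No substantive difference.
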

\begin{proof}
By the primitive braid relations and $(ft_{a_i}f^{-1})^{-k_i} = ft_{a_i}^{-k_i}f^{-1}$, we have
\begin{align*}
t_{a_1}^{k_1} t_{a_2}^{k_2} \cdots t_{a_n}^{k_n} \ t_{b_n}^{-k_n} \cdots t_{b_2}^{-k_2} t_{b_1}^{-k_1} &= t_{a_1}^{k_1} t_{a_2}^{k_2} \cdots t_{a_n}^{k_n} \cdot t_{f(a_n)}^{-k_n} \cdots t_{f(a_2)}^{-k_2} t_{f(a_1)}^{-k_1} \\
&\e t_{a_1}^{k_1} t_{a_2}^{k_2} \cdots  t_{a_n}^{k_n} f t_{a_n}^{-k_n} \cdots t_{a_2}^{-k_2} t_{a_1}^{-k_1} f^{-1}. 
\end{align*}
By $t_{a_n}^{-k_n} \cdots t_{a_2}^{-k_2} t_{a_1}^{-k_1} = (t_{a_1}^{k_1} t_{a_2}^{k_2} \cdots  t_{a_n}^{k_n})^{-1}$, we obtain the required formula. 
\end{proof}

The next three lemmas are used to construct an element $f$ in Lemma~\ref{lemma:67}. 
\begin{lem}\label{lemma:10001}
Let $a,b,c$ be nonseparating curves on $\Sigma_g^r$ such that $i(a,b)=i(b,c)=1$. 
Then the following holds. 
\begin{enumerate}
\item[{\rm (1)}] $t_bt_ct_at_b$ maps $a$ to $c$. It maps $(a,c)$ to $(c,a)$ if $i(a,c)=0$, 
\item[{\rm (2)}] $t_at_bt_c$ maps $(a,b)$ to $(b,c)$ if $i(a,c)=0$. 
\end{enumerate}
\end{lem}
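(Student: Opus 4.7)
The proof should be a short computation using just two elementary facts about Dehn twists on simple closed curves (as unoriented isotopy classes):
\begin{enumerate}
\item[(i)] If $i(x,y)=0$, then $t_x(y)=y$, and consequently $t_x$ and $t_y$ commute.
\item[(ii)] If $i(x,y)=1$, then $t_x t_y(x)=y$ (an immediate consequence of the braid relation $t_xt_yt_x=t_yt_xt_y$, applied to $x$).
\end{enumerate}

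My plan for (1) is to apply (ii) twice in succession. Starting from $a$, the inner two twists give $t_at_b(a)=b$ because $i(a,b)=1$, and then the outer two give $t_bt_c(b)=c$ because $i(b,c)=1$. Hence $t_bt_ct_at_b(a)=c$ without any hypothesis on $i(a,c)$. For the second half of (1), the key observation is the symmetry trick: when $i(a,c)=0$, fact (i) gives $t_at_c=t_ct_a$, so
\[
t_bt_ct_at_b \;=\; t_bt_at_ct_b.
\]
The right-hand side is obtained from $t_bt_ct_at_b$ by interchanging $a$ and $c$, so repeating the previous computation with the roles of $a$ and $c$ swapped yields $t_bt_at_ct_b(c)=a$, i.e.\ $t_bt_ct_at_b(c)=a$.

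For (2), the strategy is again to peel off twists one at a time, using (i) to make a Dehn twist disappear whenever its curve is disjoint from the argument. Applied to $a$: since $i(a,c)=0$, the innermost twist $t_c$ fixes $a$, and then fact (ii) gives $t_at_b(a)=b$. Applied to $b$: fact (ii) gives $t_bt_c(b)=c$ first, and then $t_a(c)=c$ by (i) since $i(a,c)=0$. So $t_at_bt_c$ sends $(a,b)$ to $(b,c)$.

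There is really no obstacle here beyond careful bookkeeping; the only mildly non-routine point is noticing that the disjointness hypothesis $i(a,c)=0$ in (1) is used not to evaluate any single twist directly but to commute $t_a$ past $t_c$, exposing the $a\leftrightarrow c$ symmetry of the word $t_bt_ct_at_b$ so that the first half of (1) can be reused verbatim.
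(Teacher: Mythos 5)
Your proof is correct and follows essentially the same route as the paper: both arguments evaluate the words twist by twist using $t_xt_y(x)=y$ for $i(x,y)=1$ and $t_x(y)=y$ (hence commutation) for $i(x,y)=0$, and in particular both use $t_at_c=t_ct_a$ to rewrite $t_bt_ct_at_b(c)=t_bt_at_ct_b(c)$ before computing. Your phrasing of that last step as an $a\leftrightarrow c$ symmetry is only a cosmetic repackaging of the paper's direct calculation.
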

\begin{proof}
Since $t_at_b(a)=b$, $t_bt_c(b)=c$, $t_ct_b(c)=b$ and $t_bt_a(b)=a$, and $t_a(c)=c$, $t_c(a)=a$ and $t_at_c=t_ct_a$ if $i(a,c)=0$ (see Definition~\ref{primitive}), (1) follows from 
\begin{align*}
t_bt_ct_at_b(a) &= t_bt_c(b) = c, \\
t_bt_ct_at_b(c) &= t_bt_at_ct_b(c) = t_bt_a(b) = a, 
\end{align*}
and (2) is obtained as follows:
\begin{align*}
t_at_bt_c(a) &= t_at_b(a) = b, \\
t_at_bt_c(b) &= t_a(c) = c. 
\end{align*}
\end{proof}

\begin{lem}\label{lemma:10002}
Let $a,b,c,\alpha,\beta,\gamma$ be nonseparating curves on $\Sigma_g^r$ such that $i(a,b)=i(b,c)=i(\alpha,\beta)=i(\beta,\gamma)=1$. 
Suppose that $a$ (resp. $\gamma$) is disjoint from $\alpha,\beta,\gamma$ (resp. $a,b,c$). 
Then, $t_{b}t_{c}t_{a}t_{b} \cdot t_{\beta}t_{\gamma}t_{\alpha}t_{\beta}$ maps $(a,\alpha)$ to $(c,\gamma)$. 
It maps $(a,c,\alpha,\gamma)$ to $(c,a,\gamma,\alpha)$ if $c$ (resp. $\alpha$) is disjoint from $a, \alpha,\beta,\gamma$ (resp. $\gamma, a,b,c$). 
\end{lem}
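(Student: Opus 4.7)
The plan is to reduce the claim to two independent applications of Lemma~\ref{lemma:10001}~(1), one for the triple $(a,b,c)$ and one for $(\alpha,\beta,\gamma)$. Writing $f := t_{b}t_{c}t_{a}t_{b}$ and $g := t_{\beta}t_{\gamma}t_{\alpha}t_{\beta}$, the map in question is the composition $fg$, so I only need to track how $f$ and $g$ act on each of the four relevant curves, one curve at a time. The key general fact I will use repeatedly is that a Dehn twist $t_{x}$ fixes the isotopy class of any simple closed curve disjoint from $x$, so a product of Dehn twists along curves disjoint from $y$ fixes $y$.

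For the first assertion, the hypothesis that $a$ is disjoint from $\alpha,\beta,\gamma$ makes every Dehn twist appearing in $g$ fix the isotopy class of $a$; hence $g(a)=a$, and Lemma~\ref{lemma:10001}~(1) gives $(fg)(a)=f(a)=c$. Symmetrically, the hypothesis that $\gamma$ is disjoint from $a,b,c$ makes $f$ fix $\gamma$, while Lemma~\ref{lemma:10001}~(1) applied to $(\alpha,\beta,\gamma)$ gives $g(\alpha)=\gamma$; composing yields $(fg)(\alpha)=f(\gamma)=\gamma$, as required.

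For the second assertion, the extra hypotheses provide exactly what is needed to iterate the argument. Disjointness of $c$ from $\alpha,\beta,\gamma$ gives $g(c)=c$, and disjointness of $\alpha$ from $a,b,c$ gives $f(\alpha)=\alpha$. Moreover, the extra hypotheses include $i(a,c)=0$ and $i(\alpha,\gamma)=0$, which are precisely what is needed to apply the second half of Lemma~\ref{lemma:10001}~(1) to each triple, yielding $f(c)=a$ and $g(\gamma)=\alpha$. Composing then gives $(fg)(c)=f(c)=a$ and $(fg)(\gamma)=f(\alpha)=\alpha$.

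I do not expect any real obstacle: the whole proof is a careful bookkeeping of which Dehn twist fixes which curve, combined with two invocations of Lemma~\ref{lemma:10001}~(1). The only subtlety worth flagging is that $f$ and $g$ need not commute as mapping classes (we have no hypothesis about $b,c$ versus $\alpha,\beta$ beyond what touches $a$ and $\gamma$), so the argument must genuinely be run on the images of the four specified curves rather than by a global commutation; the disjointness hypotheses are tailored so that this local computation nonetheless goes through cleanly.
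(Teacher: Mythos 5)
Your proposal is correct and follows essentially the same route as the paper: both reduce each assertion to Lemma~\ref{lemma:10001}~(1) applied separately to the triples $(a,b,c)$ and $(\alpha,\beta,\gamma)$, using the disjointness hypotheses to see that $t_{\beta}t_{\gamma}t_{\alpha}t_{\beta}$ fixes $a$ (resp.\ $c$) and $t_{b}t_{c}t_{a}t_{b}$ fixes $\gamma$ (resp.\ $\alpha$). Your write-up merely makes explicit the ``similar argument'' the paper leaves implicit for the second assertion, including the observation that the extra hypotheses supply $i(a,c)=i(\alpha,\gamma)=0$ needed for the latter part of Lemma~\ref{lemma:10001}~(1).
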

\begin{proof}
Since $a$ (resp. $\gamma$) is disjoint from $\alpha,\beta,\gamma$ (resp. $a,b,c$), we have 
\begin{align*}
t_b t_c t_a t_b \cdot t_{\beta} t_{\gamma} t_{\alpha} t_{\beta} (a) &= t_b t_c t_a t_b (a) = c, \\
t_b t_c t_a t_b \cdot t_{\beta} t_{\gamma} t_{\alpha} t_{\beta} (\alpha) &= t_b t_c t_a t_b (\gamma)  = \gamma 
\end{align*}
by the farmer part of Lemma~\ref{lemma:10001} (1). By a similar argument, the latter part of Lemma~\ref{lemma:10002} follows from that of Lemma~\ref{lemma:10001} (1).  
This finishes the proof. 
\end{proof}

\begin{lem}\label{lemma:10003}
Let $a,b,c,\alpha,\beta,\gamma$ be nonseparating curves on $\Sigma_g^r$ such that $i(a,b)=i(b,c)=i(\alpha,\beta)=i(\beta,\gamma)=1$. 
Suppose that $a,c$ (resp. $\beta$) are disjoint from $\alpha,\beta,\gamma$ (resp. $a,b,c$). 
Then, $ t_{\beta}t_{\gamma} \cdot t_{b}t_{c}t_{a}t_{b} \cdot t_{\alpha}t_{\beta}$ maps $(a,\alpha)$ to $(c,\gamma)$. 
\end{lem}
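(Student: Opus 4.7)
The plan is to imitate the proofs of Lemma~\ref{lemma:10001} and Lemma~\ref{lemma:10002}: just apply the word
$f = t_{\beta}t_{\gamma} \cdot t_{b}t_{c}t_{a}t_{b} \cdot t_{\alpha}t_{\beta}$
to each of $a$ and $\alpha$ from the innermost factor outward, using only two basic facts repeatedly: $t_x(y)=y$ whenever $i(x,y)=0$, and $t_xt_y(x)=y$ whenever $i(x,y)=1$ (a consequence of the braid relation). The disjointness hypotheses are arranged exactly so that most twists act trivially on the curve being transported, and the remaining twists assemble into the two braid-type moves $t_at_b(a)=b$, $t_bt_c(b)=c$ and $t_\alpha t_\beta(\alpha)=\beta$, $t_\beta t_\gamma(\beta)=\gamma$.

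First I would track $a$. Reading $f$ from right to left: $t_\beta(a)=a$ and $t_\alpha(a)=a$ since $a$ is disjoint from $\alpha$ and $\beta$. The middle block is exactly the one studied in Lemma~\ref{lemma:10001}(1): applying $t_bt_ct_at_b$ sends $a$ to $c$. Finally $t_\gamma(c)=c$ and $t_\beta(c)=c$ since $c$ is disjoint from $\beta$ and $\gamma$. Hence $f(a)=c$.

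Next I would track $\alpha$. The two innermost twists give $t_\alpha t_\beta(\alpha)=\beta$ by the braid relation, using $i(\alpha,\beta)=1$. The middle block $t_bt_ct_at_b$ then acts on $\beta$; but $\beta$ is disjoint from each of $a,b,c$, so each of these four twists fixes $\beta$. Finally the outermost pair $t_\beta t_\gamma$ acts on $\beta$ and sends it to $\gamma$ using $i(\beta,\gamma)=1$. Hence $f(\alpha)=\gamma$.

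I do not expect a real obstacle here: the statement is essentially the ``decoupled'' version of Lemma~\ref{lemma:10002} in which the two half-twist words $t_bt_ct_at_b$ and $t_\beta t_\gamma \cdots t_\alpha t_\beta$ are interleaved rather than concatenated. The only thing to be a bit careful about is confirming that at each stage the curve being acted on is disjoint from the twisting curve, which is precisely what the hypothesis ``$a,c$ disjoint from $\alpha,\beta,\gamma$ and $\beta$ disjoint from $a,b,c$'' guarantees; notably, no assumption is needed on how $b$ meets $\alpha$ or $\gamma$, because $b$ is only ever applied to $a$, $\beta$, or a curve produced from $a$ (never to $\alpha$ or $\gamma$), and $\alpha,\gamma$ are only touched by $t_\alpha,t_\beta,t_\gamma$.
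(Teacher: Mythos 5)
Your proposal is correct and is essentially the paper's own argument: track $a$ and $\alpha$ through the word, using disjointness to see the flanking twists act trivially, Lemma~\ref{lemma:10001}~(1) for the middle block on $a$, and $t_\alpha t_\beta(\alpha)=\beta$, $t_\beta t_\gamma(\beta)=\gamma$ for $\alpha$. No gaps.
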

\begin{proof}
Since $a,c$ (resp. $\beta$) are disjoint from $\alpha,\beta,\gamma$ (resp. $a,b,c$), by $t_\alpha t_\beta (\alpha) = \beta$, $t_\beta t_\gamma (\beta) = \gamma$ (see Definition~\ref{primitive}) and the farmer part of Lemma~\ref{lemma:10001} (1), we have
\begin{align*}
t_{\beta} t_{\gamma} \cdot t_b t_c t_a t_b \cdot t_{\alpha} t_{\beta} (a) &= t_{\beta} t_{\gamma} \cdot t_b t_c t_a t_b (a) = t_{\beta} t_{\gamma} (c) = c, \\
t_{\beta} t_{\gamma} \cdot t_b t_c t_a t_b \cdot t_{\alpha} t_{\beta} (\alpha) &= t_{\beta} t_{\gamma} \cdot t_b t_c t_a t_b (\beta)  = t_{\beta} t_{\gamma} (\beta) = \gamma,
\end{align*}
and this finishes the proof. 
\end{proof}

The key lemma of this paper is following. 
\begin{lem}\label{lemma:8}
Let $a_1,a_2,\ldots,a_{m+1}$ be disjoint simple closed curves on $\Sigma_g^r$. 
If there is an element $f$ in $\CM_g^r$ such that $f(a_i)=a_{i+1}$ for $i=1,2,\ldots,m$, then we have the following relations in $\CM_g^r$ for any integers $k_1,k_2,\ldots,k_{m+1}$: 
\begin{enumerate}
\item[{\rm (1)}] $t_{a_1}^{k_1} t_{a_2}^{k_2} \cdots t_{a_{m+1}}^{k_{m+1}} \e [t_{a_1}^{k_1} t_{a_2}^{k_1+k_2} \cdots t_{a_m}^{k_1+k_2+\cdots + k_m}, f] \cdot t_{a_{m+1}}^{k_1+k_2+\cdots +k_{m+1}}$, 
\item[{\rm (2)}] $t_{a_1}^{k_1} t_{a_2}^{k_2} \cdots t_{a_{m+1}}^{k_{m+1}} \e t_{a_{m+1}}^{k_1+k_2+\cdots +k_{m+1}} \cdot [t_{a_1}^{k_1} t_{a_2}^{k_1+k_2} \cdots t_{a_m}^{k_1+k_2+\cdots + k_m}, f]$, 
\item[{\rm (3)}] $t_{a_1}^{k_1} t_{a_2}^{k_2} \cdots t_{a_{m+1}}^{k_{m+1}} \e t_{a_{m+1}}^{k_1+k_2+\cdots +k_{m+1}} \cdot [f, t_{a_1}^{-k_1} t_{a_2}^{-k_1-k_2} \cdots t_{a_m}^{-k_1-k_2-\cdots-k_m}]$.
\end{enumerate}
\end{lem}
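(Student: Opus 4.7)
The plan is to prove all three identities by a single bookkeeping calculation together with two ingredients: (i) the conjugation rule $f t_{a_i}^{\ell} f^{-1} \equiv_P t_{a_{i+1}}^{\ell}$, which follows from the primitive braid relation $f t_a f^{-1} = t_{f(a)}$, and (ii) the fact that the $t_{a_i}$ commute pairwise modulo $P$, since the curves $a_1, \ldots, a_{m+1}$ are disjoint (commutative relations are primitive braid relations by Definition~\ref{primitive}). I would set $S_j := k_1 + k_2 + \cdots + k_j$ for notational convenience and write $A := t_{a_1}^{S_1} t_{a_2}^{S_2} \cdots t_{a_m}^{S_m}$ and $B := A^{-1} \equiv_P t_{a_1}^{-S_1} t_{a_2}^{-S_2} \cdots t_{a_m}^{-S_m}$ (the reordering using disjointness).

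For part (1), I would compute $[A, f] = A f A^{-1} f^{-1}$ by inserting $f^{-1}f$ between successive factors of $A^{-1}$ and applying the conjugation rule to obtain
\begin{align*}
f A^{-1} f^{-1} \equiv_P t_{a_{m+1}}^{-S_m} t_{a_m}^{-S_{m-1}} \cdots t_{a_2}^{-S_1}.
\end{align*}
Multiplying on the left by $A$ and collecting powers of each $t_{a_i}$ via disjointness gives a telescoping effect: the $a_i$ contribution becomes $t_{a_i}^{S_i - S_{i-1}} = t_{a_i}^{k_i}$ for $2 \le i \le m$, the $a_1$ contribution is $t_{a_1}^{k_1}$, and the $a_{m+1}$ contribution is $t_{a_{m+1}}^{-S_m}$. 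Multiplying by $t_{a_{m+1}}^{S_{m+1}}$ on the right replaces the exponent on $a_{m+1}$ by $S_{m+1} - S_m = k_{m+1}$, yielding exactly $t_{a_1}^{k_1} t_{a_2}^{k_2} \cdots t_{a_{m+1}}^{k_{m+1}}$.

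For part (2), I would simply move $t_{a_{m+1}}^{S_{m+1}}$ across the prefix $t_{a_1}^{k_1} \cdots t_{a_m}^{k_m}$ obtained in (1) using disjointness (again modulo $P$). For part (3), I would repeat the same calculation with $[f, B] = f B f^{-1} B^{-1}$, where $B = t_{a_1}^{-S_1} \cdots t_{a_m}^{-S_m}$; the conjugation rule gives $f B f^{-1} \equiv_P t_{a_2}^{-S_1} \cdots t_{a_{m+1}}^{-S_m}$, and concatenating with $B^{-1} \equiv_P t_{a_m}^{S_m} \cdots t_{a_1}^{S_1}$ and again grouping by curve yields the same expression $t_{a_1}^{k_1} \cdots t_{a_m}^{k_m} t_{a_{m+1}}^{-S_m}$; pre-multiplying by $t_{a_{m+1}}^{S_{m+1}}$ and commuting finishes the job.

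There is no real obstacle beyond careful bookkeeping: each step is an instance of a primitive braid relation (either conjugation by $f$ or a commutativity from disjointness), and the only subtle point is keeping track of the signs and cumulative sums $S_j$ so that the telescope collapses correctly. The main thing to verify is that every rearrangement is legal under $\equiv_P$, which is automatic since both the conjugation moves and the disjointness-commutation moves are listed in Definition~\ref{primitive} as instances of $P$.
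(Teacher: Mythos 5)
Your proposal is correct and is essentially the paper's own argument: both rest on the cumulative sums $K_i=k_1+\cdots+k_i$, the conjugation relation $ft_{a_i}^{\ell}f^{-1}\e t_{a_{i+1}}^{\ell}$, and commutativity of twists along the disjoint $a_i$, with the only difference being that you expand the commutator and collapse it to the left-hand side, whereas the paper telescopes the left-hand side into the commutator form. This is the same computation read in the opposite direction, so there is nothing further to add.
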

\begin{proof}
For abbreviation, set $K_i:=k_1+k_2+\cdots + k_i$. 
Then, we have
\[t_{a_1}^{k_1} t_{a_2}^{k_2} \cdots t_{a_{m+1}}^{k_{m+1}} = t_{a_1}^{K_1} t_{a_2}^{-K_1} \cdot t_{a_2}^{K_2} t_{a_3}^{-K_2} \cdot t_{a_3}^{K_3} t_{a_4}^{-K_3} \cdots t_{a_m}^{K_m} t_{a_{m+1}}^{-K_m} \cdot t_{a_{m+1}}^{K_{m+1}}.\] 
This relation and the commutative relations give the following three relations:
\begin{align*}
t_{a_1}^{k_1} t_{a_2}^{k_2} \cdots t_{a_{m+1}}^{k_{m+1}} &\e t_{a_1}^{K_1} t_{a_2}^{K_2} \cdots t_{a_m}^{K_m} \cdot t_{a_{m+1}}^{-K_m} \cdots t_{a_3}^{-K_2} t_{a_2}^{-K_1} \cdot t_{a_{m+1}}^{K_{m+1}}, \\
t_{a_1}^{k_1} t_{a_2}^{k_2} \cdots t_{a_{m+1}}^{k_{m+1}} &\e t_{a_{m+1}}^{K_{m+1}} \cdot t_{a_1}^{K_1} t_{a_2}^{K_2} \cdots t_{a_m}^{K_m} \cdot t_{a_{m+1}}^{-K_m} \cdots t_{a_3}^{-K_2} t_{a_2}^{-K_1}, \\
t_{a_1}^{k_1} t_{a_2}^{k_2} \cdots t_{a_{m+1}}^{k_{m+1}} &\e t_{a_{m+1}}^{K_{m+1}} \cdot t_{a_2}^{-K_1} t_{a_3}^{-K_2} \cdots t_{a_{m+1}}^{-K_m} \cdot t_{a_m}^{K_m} \cdots t_{a_2}^{K_2} t_{a_1}^{K_1}. 
\end{align*}
Here, by the primitive braid relation $t_{a_{i+1}} \e ft_{a_i}f^{-1}$ and $(ft_{a_i}f^{-1})^{-K_i} = ft_{a_i}^{-K_i}f^{-1}$ for $i=1,2,\ldots,m$, we obtain 
\begin{align*}
t_{a_1}^{K_1} t_{a_2}^{K_2} \cdots t_{a_m}^{K_m} \cdot t_{a_{m+1}}^{-K_m} \cdots t_{a_3}^{-K_2} t_{a_2}^{-K_1} &\e t_{a_1}^{K_1} t_{a_2}^{K_2} \cdots t_{a_m}^{K_m} \cdot f t_{a_m}^{-K_m} \cdots t_{a_2}^{-K_2} t_{a_1}^{-K_1} f^{-1}, \\
t_{a_2}^{-K_1} t_{a_3}^{-K_2} \cdots t_{a_{m+1}}^{-K_m} \cdot t_{a_m}^{K_m} \cdots t_{a_2}^{K_2} t_{a_1}^{K_1} &\e f t_{a_1}^{-K_1} t_{a_2}^{-K_2} \cdots t_{a_m}^{-K_m} f^{-1} \cdot t_{a_m}^{K_m} \cdots t_{a_2}^{K_2} t_{a_1}^{K_1}. 
\end{align*}
Hence, the relations (1)--(3) follow from $t_{a_m}^{-K_m} \cdots t_{a_2}^{-K_2} t_{a_1}^{-K_1} = (t_{a_1}^{K_1} t_{a_2}^{K_2} \cdots t_{a_m}^{K_m})^{-1}$ and $t_{a_m}^{K_m} \cdots t_{a_2}^{K_2} t_{a_1}^{K_1} = (t_{a_1}^{-K_1} t_{a_2}^{-K_2} \cdots t_{a_m}^{-K_m})^{-1}$. 
\end{proof}

The next four lemmas are used to reduce the number of commutators. 
\begin{lem}\label{lemma:10}
For elements $X_1,X_2,Y_1,Y_2$ in $\CM_g^r$ with $X_iY_j \e Y_jX_i$, we have
\[[X_1,X_2][Y_1,Y_2] \e [X_1Y_1,X_2Y_2]. \] 
\end{lem}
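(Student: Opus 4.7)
The plan is to prove this directly by expanding both commutators and then shuffling the factors using the hypothesis $X_i Y_j \equiv_P Y_j X_i$. First, observe that the hypothesis, combined with the observation that a primitive braid relator can be inverted (yielding another primitive braid relator) and conjugated (still yielding a product of primitive braid relators via Lemma~\ref{lemma:???}), implies that any monomial in $X_i^{\pm 1}$ commutes modulo $P$ with any monomial in $Y_j^{\pm 1}$. So effectively we have freedom to move any $X_i^{\pm 1}$ past any $Y_j^{\pm 1}$ at the cost of introducing primitive braid relators.

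Next, I would expand
\[
[X_1 Y_1, X_2 Y_2] = X_1 Y_1 X_2 Y_2 Y_1^{-1} X_1^{-1} Y_2^{-1} X_2^{-1}.
\]
Then I would apply the relation $Y_1 X_2 \equiv_P X_2 Y_1$ to bring $X_2$ next to $X_1$, producing $X_1 X_2 Y_1 Y_2 Y_1^{-1} X_1^{-1} Y_2^{-1} X_2^{-1}$. After this, I would slide $X_1^{-1}$ leftward past the block $Y_1 Y_2 Y_1^{-1}$ (using $X_1^{-1} Y_j \equiv_P Y_j X_1^{-1}$) to obtain $X_1 X_2 X_1^{-1} Y_1 Y_2 Y_1^{-1} Y_2^{-1} X_2^{-1}$. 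Finally, I would slide $X_2^{-1}$ leftward past $Y_1 Y_2 Y_1^{-1} Y_2^{-1}$, yielding
\[
X_1 X_2 X_1^{-1} X_2^{-1} Y_1 Y_2 Y_1^{-1} Y_2^{-1} = [X_1, X_2] [Y_1, Y_2],
\]
which completes the derivation.

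There is essentially no obstacle beyond bookkeeping; the only minor point is to justify that $X_i Y_j \equiv_P Y_j X_i$ gives rise to $X_i^{-1} Y_j^{\pm 1} \equiv_P Y_j^{\pm 1} X_i^{-1}$. This follows because the inverse of any word representing the identity modulo $P$ also represents the identity modulo $P$, since the set of primitive braid relators is closed under inversion and conjugation by arbitrary elements of $\CM_g^r$ (by Lemma~\ref{lemma:???} applied with $f$ itself a Dehn twist word). Once this is observed, the computation above is a finite sequence of four commutations of adjacent letters, each substitution being an instance of Definition~\ref{substitution}, and the conclusion follows.
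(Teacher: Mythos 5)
Your argument is correct and is essentially the paper's proof: the paper simply records the single rearrangement $X_1X_2X_1^{-1}X_2^{-1}\cdot Y_1Y_2Y_1^{-1}Y_2^{-1} \e X_1Y_1X_2Y_2Y_1^{-1}X_1^{-1}Y_2^{-1}X_2^{-1}$, which is exactly your shuffle read in the opposite direction. Your side remark about inverses is fine (and is implicitly needed in the paper's one-liner as well), though the cleaner justification is that Definition~\ref{substitution} already permits inserting $U_1P^{\epsilon}U_1^{-1}$ with $\epsilon=\pm1$ and arbitrary $U_1$, so $X_iY_j\e Y_jX_i$ passes to inverses by conjugating the corresponding relator, without needing Lemma~\ref{lemma:???}.
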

\begin{proof}
It follows from 
\begin{align*}
X_1X_2X_1^{-1}X_2^{-1} \cdot Y_1Y_2Y_1^{-1}Y_2^{-1} \e X_1Y_1 X_2Y_2 Y_1^{-1}X_1^{-1} Y_2^{-1}X_2^{-1}. 
\end{align*}
\end{proof}

\begin{lem}\label{XYYZ}
For any three elements $X,Y,Z$ in a group $G$, we have
\[[X,Y][Y,Z] = [XZ^{-1},ZYZ^{-1}].\]
\end{lem}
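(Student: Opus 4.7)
The plan is a direct computational verification: expand both sides as words in $X, Y, Z$ and their inverses and observe that they reduce to the same expression. This is purely a group-theoretic identity, with no use of the mapping class group structure.

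First I would expand the left-hand side, using the cancellation $Y^{-1} \cdot Y = e$:
\[
[X,Y][Y,Z] = XYX^{-1}Y^{-1} \cdot YZY^{-1}Z^{-1} = XYX^{-1}ZY^{-1}Z^{-1}.
\]
Next I would expand the right-hand side, using $(XZ^{-1})^{-1} = ZX^{-1}$ and $(ZYZ^{-1})^{-1} = ZY^{-1}Z^{-1}$:
\[
[XZ^{-1}, ZYZ^{-1}] = (XZ^{-1})(ZYZ^{-1})(ZX^{-1})(ZY^{-1}Z^{-1}).
\]
Collapsing the adjacent pairs $Z^{-1}Z$ inside this product yields
\[
XYZ^{-1} \cdot ZX^{-1} \cdot ZY^{-1}Z^{-1} = XYX^{-1}ZY^{-1}Z^{-1},
\]
which matches the simplified left-hand side. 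The two expressions are therefore equal in any group $G$.

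There is no real obstacle here; the only thing to be careful about is the order of multiplication (the paper uses the convention that $\phi_2\phi_1$ means applying $\phi_1$ first, but this is irrelevant for an abstract group identity), and the bookkeeping of inverses when computing $(XZ^{-1})^{-1}$ and $(ZYZ^{-1})^{-1}$. The identity is a standard ``telescoping'' trick — inserting $Z^{-1}Z$ between the two commutators — and will presumably be invoked later to combine two commutators sharing a common entry into a single commutator, thereby reducing commutator length by one.
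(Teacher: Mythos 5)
Your proof is correct and coincides with the paper's: both expand $[X,Y][Y,Z]$ and $[XZ^{-1},ZYZ^{-1}]$ as words and check that each reduces to $XYX^{-1}ZY^{-1}Z^{-1}$. Nothing further is needed.
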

\begin{proof}
The equation immediately follows from the following computations$:$
\begin{align*}
[X,Y][Y,Z] &= XYX^{-1}Y^{-1} \cdot YZY^{-1}Z^{-1} = XYX^{-1} ZY^{-1}Z^{-1}, \\
[XZ^{-1},ZYZ^{-1}] &= (XZ^{-1}) (ZYZ^{-1}) (ZX^{-1}) (ZY^{-1}Z^{-1}) = XYX^{-1} ZY^{-1}Z^{-1}. 
\end{align*}
\end{proof}

\begin{lem}\label{lemma:31}
Let $X,Y$ be element in $\CM_g^r$. For any integer $n$, we have
\begin{enumerate}
\item[{\rm (1)}] $\displaystyle (XY)^n = {}_{X}(Y) {}_{X^2}(Y) \cdots {}_{X^n}(Y) \cdot X^n$, 
\item[{\rm (2)}] $\displaystyle (XY)^n = X^n \cdot {}_{X^{-n+1}}(Y) \cdots {}_{X^{-2}}(Y) {}_{X^{-1}}(Y) Y$. 
\end{enumerate}
\end{lem}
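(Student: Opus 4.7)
\medskip

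\textbf{Plan.} Both identities are purely algebraic and I would prove them by induction on $n$, treating $n \geq 0$ directly and then obtaining $n < 0$ by inversion. The key telescoping identity driving everything is ${}_{X^k}(Y) \cdot X = X^k Y X^{1-k}$, together with the immediate observation that ${}_{X^{k+1}}(Y) \cdot X^{k+1} = X^k \cdot XY \cdot X^{-k} \cdot X^k \cdot X = X^{k+1}Y X^{-(k+1)} \cdot X^{k+1}$, so that appending one factor corresponds to multiplying by $XY$ on the appropriate side.

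For formula (1), the base cases $n=0$ (empty product) and $n=1$ (namely $XYX^{-1}\cdot X = XY$) are immediate. For the inductive step, assuming the formula for $n$, I multiply on the right by $XY$:
\begin{align*}
(XY)^{n+1} &= \bigl({}_{X}(Y)\,{}_{X^2}(Y)\cdots {}_{X^n}(Y)\bigr)\cdot X^n\cdot XY \\
&= \bigl({}_{X}(Y)\cdots {}_{X^n}(Y)\bigr)\cdot X^{n+1}Y X^{-(n+1)}\cdot X^{n+1} \\
&= \bigl({}_{X}(Y)\cdots {}_{X^n}(Y)\cdot {}_{X^{n+1}}(Y)\bigr)\cdot X^{n+1},
\end{align*}
which is the desired expression for $n+1$.

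For formula (2), the base cases are similarly trivial, and for the inductive step I instead multiply on the \emph{left} by $XY$. Using the identity $XY\cdot X^{n} = X^{n+1}\cdot X^{-n}YX^{n} = X^{n+1}\cdot {}_{X^{-n}}(Y)$, one obtains
\begin{align*}
(XY)^{n+1} &= XY\cdot X^n\cdot \bigl({}_{X^{-n+1}}(Y)\cdots {}_{X^{-1}}(Y)\bigr)\cdot Y \\
&= X^{n+1}\cdot {}_{X^{-n}}(Y)\cdot \bigl({}_{X^{-n+1}}(Y)\cdots {}_{X^{-1}}(Y)\bigr)\cdot Y,
\end{align*}
which is formula (2) for $n+1$. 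For negative $n$ I would apply $(XY)^n = ((XY)^{-n})^{-1}$ to the already established positive-$n$ formula, using $({}_{X^k}(Y))^{-1} = {}_{X^k}(Y^{-1})$ and reversing the order of the factors; this converts formula (1) for $-n$ into formula (2)-type expression for $n$ and vice versa, giving the stated identities under the natural interpretation of the products with reversed index ranges.

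\textbf{Expected obstacle.} There is essentially no mathematical obstacle: the statement is a routine group-theoretic identity obtained by telescoping. The only delicate point is the bookkeeping of index ranges and empty products at the boundary of the induction (and for negative $n$), which requires care but nothing more.
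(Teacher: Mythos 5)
Your proof is correct and rests on exactly the same telescoping identity the paper uses; the paper simply writes out the products $(XYX^{-1})(X^2YX^{-2})\cdots(X^nYX^{-n})X^n$ and $X^n(X^{-n+1}YX^{n-1})\cdots(X^{-1}YX)Y$ and declares the collapse immediate, whereas you formalize that collapse as an induction on $n$. Your remark on negative $n$ (inverting the positive-$n$ formulas and reinterpreting the index range) is also consistent with, and if anything slightly more careful than, the paper's treatment, so there is nothing to fix.
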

\begin{proof}
The equations immediately follow from 
\begin{align*}
(XY)^n &= (XYX^{-1})(X^2YX^{-2}) \cdots (X^nYX^{-n}) X^n, \\
(XY)^n &= X^n (X^{-n+1}YX^{n-1}) \cdots (X^{-2}YX^2) (X^{-1}YX) Y. 
\end{align*}
\end{proof}

\begin{lem}\label{lemma:21}
Let $X$ and $f$ be elements in $\CM_g^r$ such that $X$ is the product $X=X_1X_2\cdots X_n$ satisfying $X_i \cdot X_j \e X_j \cdot X_i$ for $i\neq j$, $X_{i+1} \cdot f \e f \cdot X_i $ and $X_1 \cdot f \e f \cdot X_n $.  Then, we have
\[X \cdot f \e f \cdot X.\]
\end{lem}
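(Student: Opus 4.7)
The plan is to push $f$ from the right end of $X\cdot f = X_1 X_2 \cdots X_n \cdot f$ all the way to the left by iterated application of the hypotheses $X_{i+1} f \e f X_i$, closing up with the wrap-around relation $X_1 f \e f X_n$ and the mutual commutativity of the $X_i$'s. Conceptually, these relations say that conjugation by $f$ cyclically permutes the factors $X_1, \dots, X_n$, and the product of all of them is invariant under a cyclic permutation; so we expect $Xf \e fX$.

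Concretely, I would argue by induction on $k$ that, for $1 \le k \le n-1$,
\[
X_1 X_2 \cdots X_n \cdot f \e X_1 X_2 \cdots X_{n-k} \cdot f \cdot X_{n-k} X_{n-k+1} \cdots X_{n-1}.
\]
The base case $k=1$ is exactly one application of $X_n f \e f X_{n-1}$. For the inductive step, apply $X_{n-k} f \e f X_{n-k-1}$ to the instance of $X_{n-k}$ that now sits immediately to the left of $f$; the factors $X_{n-k}, X_{n-k+1}, \dots, X_{n-1}$ on the right of $f$ are untouched by this substitution, so the induction goes through. At $k = n-1$ we obtain
\[
X_1 \cdot f \cdot X_1 X_2 \cdots X_{n-1}.
\]

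To finish, invoke the wrap-around relation $X_1 f \e f X_n$ to get
\[
f \cdot X_n \cdot X_1 X_2 \cdots X_{n-1},
\]
and then use the commutativity hypotheses $X_i X_j \e X_j X_i$ (for $i \neq j$) to slide $X_n$ past $X_1, \dots, X_{n-1}$ to its rightful position, yielding $f \cdot X_1 X_2 \cdots X_n = f \cdot X$.

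There is no real obstacle beyond careful bookkeeping: at each inductive step one must be sure that the relation $X_{n-k} f \e f X_{n-k-1}$ is being applied to the correct occurrence of $X_{n-k}$ (the one that has just become adjacent to $f$ on the left), and that the $X_j$'s accumulating on the right of $f$ do not interfere with subsequent substitutions. Both points are immediate from the explicit form of the intermediate word.
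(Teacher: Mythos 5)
Your proof is correct and takes essentially the same route as the paper's: the paper first uses the commutativity hypotheses to rewrite $X_1X_2\cdots X_n\cdot f \e X_2X_3\cdots X_nX_1\cdot f$ and then pushes $f$ leftward through the factors, while you push $f$ leftward first and commute $X_n$ into place at the end. The two orderings are interchangeable, and both arguments rest on exactly the same ingredients, namely the relations $X_{i+1}\cdot f\e f\cdot X_i$, $X_1\cdot f\e f\cdot X_n$ and the mutual commutativity of the $X_i$ modulo $P$.
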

\begin{proof}
We obtain the claim as follows:
\begin{align*}
X_1 X_2 \cdots X_{n-1} X_n \cdot f &\e X_2 X_3 \cdots X_n X_1 \cdot f \e f\cdot X_1 X_2 \cdots X_{n-1} X_n. 
\end{align*}
\end{proof}

\section{Scl of the Dehn twist along a nonseparating curve}\label{section:two}
We first give the proof of Theorem~\ref{thmD} (1) since some results in it will be used in the proofs of Theorem~\ref{thmA}, \ref{thmB} and \ref{thmE}. 
Note that since Dehn twists along two nonseparating curves $s_0,s_0'$ (resp. two separating curves $s_h,s_h'$ of type $h$ and a separating curve $s_{g-h}$ of type $g-h$) are conjugate, and a conjugate of a commutator is again a commutator, it suffices to prove Theorem~\ref{thmD} and~\ref{thmE} for some nonseparating curve (resp. separating curve of type $h$).

In order to prove Theorem~\ref{thmD} (1), we present the 3-chain relator and factorize its $n$-the power as a product of commutators and Dehn twists. 
The factorization will be used to show Theorem~\ref{thmA}, \ref{thmC}, \ref{thmD} (1) and \ref{thmE} (1) and (2). 
\begin{defn}\label{3-chain}\rm 
Let $a,b,c$ be simple closed curves on $\Sigma_1^2$ bounded by $d,d'$ with $i(a,b)=i(b,c)=1$ and $i(c,a)=0$ as in Figure~\ref{abcdd'}. 
Then, we have the \textit{3-chain relation} $t_{d^\prime}t_d=(t_at_bt_c)^4$ in $\CM_1^2$ and the \textit{3-chain relator}
\[C_3 :=  t_d^{-1}t_{d^\prime}^{-1}(t_at_bt_c)^4.\]
\end{defn}
\begin{figure}[hbt]
  \centering
       \includegraphics[scale=.95]{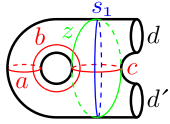}
       \caption{The curves $a,b,c,d,d',s_1,z$ on $\Sigma_1^2$.}
       \label{abcdd'}
  \end{figure}

The next proposition is the key result in Section~\ref{section:two}. 
We will use some equations in the proof to show Theorem~\ref{thmA}, \ref{thmC}, \ref{thmD} (1) and \ref{thmE} (1) and (2). 
\begin{prop}\label{prop:2}
In the notation of Definition~\ref{3-chain}, for any integer $n$, there are elements $V_1,W_1,V_2,W_2,\ldots,V_{|n|+1},W_{|n|+1}$ in $\CM_1^2$ such that the following holds in $\CM_1^2$:
\[C_3^n \e t_{b}^{12n} [V_1,W_1] [V_2,W_2] \cdots [V_{|n|+1},W_{|n|+1}] \cdot t_{d}^{-n} t_{d'}^{-n}. \]
\end{prop}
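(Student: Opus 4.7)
Plan. Let $f := t_at_bt_c$, so that $C_3 = t_d^{-1} t_{d'}^{-1} f^4$. My approach has three steps.

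First (reduction). Because $d$ and $d'$ are boundary components of $\Sigma_1^2$, the Dehn twists $t_d$ and $t_{d'}$ commute modulo $\e$ with every element of $\CM_1^2$ by Remark~\ref{fundamental}(2). Commuting them rightward past each factor of $f^4$ gives
\[
C_3^n \e f^{4n} \cdot t_d^{-n} t_{d'}^{-n},
\]
so the problem reduces to producing a factorization
\[
f^{4n} \e t_b^{12n} \cdot [V_1, W_1][V_2, W_2] \cdots [V_{|n|+1}, W_{|n|+1}]. \qquad (\ast)
\]

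Second (a key identity). Because $i(a,c)=0$ and $i(a,b)=i(b,c)=1$, Lemma~\ref{lemma:10001} shows $f(a) = b$ and $f(b) = c$. The primitive braid relations then yield $t_a \e f^{-1} t_b f$ and $t_c \e f t_b f^{-1}$. Substituting these into $f = t_at_bt_c$ and clearing the $f^{\pm 1}$'s produces the identity
\[
f^3 \e t_b\, f\, t_b\, f\, t_b,\qquad\text{equivalently}\qquad f^4 \e (t_b f)^3.
\]
This identity is the mechanism that transfers $t_b$-mass onto the left of the desired factorization.

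Third (induction on $|n|$). The base case $n = 0$ is trivial with $[V_1, W_1] = [1,1]$. For the step from $n$ to $n+1$ with $n \geq 0$ (the negative case is analogous via $f^{-4} \e (t_bf)^{-3}$), I would multiply the inductive factorization by $f^4 \e (t_b f)^3$ on the right, then iteratively replace the remaining twist factors in the new $f^4$ by conjugates of $t_b$ using $t_a \e f^{-1}t_bf$ and $t_c \e f t_b f^{-1}$, so as to pull out twelve additional copies of $t_b$. The leftover conjugates of $t_b$ are then assembled into a single new commutator by applying Lemma~\ref{lemma:67} (or, equivalently, Lemma~\ref{lemma:8} to the telescoping chain of curves $b, f(b), f^2(b), \dots$ with conjugating element $f$). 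Finally, Lemma~\ref{XYYZ} is invoked to merge this new commutator with the last commutator $[V_{|n|+1}, W_{|n|+1}]$ of the previous step, so that the commutator count goes up by exactly one and becomes $|n+1|+1$.

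The main obstacle is the bookkeeping in the inductive step. To apply Lemma~\ref{XYYZ} at each stage, the last commutator of step $n$ and the first new commutator at step $n+1$ must share an argument, which forces the $V_i, W_i$ to be chosen with a uniform telescoping structure (most naturally the one produced by Lemma~\ref{lemma:8}) so that consecutive commutators always have the forms $[\cdot,Y]$ and $[Y,\cdot]$. Equally delicate is the need to shuttle the freshly extracted $t_b^{12}$ leftward across the existing commutators modulo $\e$, which requires that the entries $V_i, W_i$ commute with $t_b$ up to further controlled corrections absorbed into the commutator being formed. Arranging all of this so that the count comes out to exactly $|n|+1$ rather than growing like $2|n|$ is the technical heart of the proposition.
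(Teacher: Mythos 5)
Your first reduction (centrality of $t_d,t_{d'}$, so $C_3^n\e f^{4n}t_d^{-n}t_{d'}^{-n}$ with $f=t_at_bt_c$) and your identity $f^4\e(t_bf)^3$, obtained from $t_a\e f^{-1}t_bf$, $t_c\e ft_bf^{-1}$, are both correct. But the proof stops exactly where the proposition begins: you never actually produce the $|n|+1$ commutators, and you yourself label the count ``the technical heart.'' Two concrete steps of your plan fail as stated. First, Lemma~\ref{lemma:8} cannot be applied to the chain $b, f(b), f^2(b),\ldots$, because its hypothesis requires the curves to be \emph{disjoint}, whereas $f(b)=c$ and $i(b,c)=1$; so the proposed ``telescoping'' assembly of the twelve conjugates of $t_b$ into a single commutator has no justification (Lemma~\ref{lemma:67} needs the word to already have the shape $t_{a_1}^{k_1}\cdots t_{a_m}^{k_m}t_{b_m}^{-k_m}\cdots t_{b_1}^{-k_1}$, which you have not arranged). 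Second, the inductive step needs $t_b^{12}$ to be shuttled leftward past the previously built commutators; their entries are words in $t_a,t_c,f$, none of which commute with $t_b$ (all the underlying curves meet $b$), and ``controlled corrections absorbed into the commutator'' is precisely the unproven claim. Likewise the merge via Lemma~\ref{XYYZ} requires consecutive commutators to share an entry, a structure you never exhibit. As written, the argument only yields a commutator count growing roughly like $2|n|$, not $|n|+1$.

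For comparison, the paper avoids induction and the commuting problem altogether: it sets $v=t_at_c(b)$, checks that $t_bt_v$ interchanges $a$ and $c$, and derives $C_3^n\e t_a^{4n}t_c^{4n}(t_bt_v)^{2n}t_d^{-n}t_{d'}^{-n}$. Then $(t_bt_v)^2\e t_b^4\,[t_b^{-1}t_{t_b^{-2}(v)},\phi_3]$ with $\phi_3=t_at_ct_b^3$ (Lemma~\ref{lemma:67}), and Lemma~\ref{lemma:31}\,(2) extracts $t_b^{4n}$ from the $n$-th power in one stroke, leaving $n$ factors that are conjugates of this single commutator by powers of $t_b^{-4}$ --- conjugates of commutators are commutators, so no commutation of $t_b$ with anything is needed. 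Finally $t_a^{4n}t_c^{4n}t_b^{4n}\e t_b^{12n}[t_b^{-4n}t_{t_b^{-8n}(a)}^{4n},\phi_4]$ supplies the last commutator, giving exactly $|n|+1$. If you want to salvage your route, you would need an analogous one-shot extraction (e.g.\ via Lemma~\ref{lemma:31}) rather than an induction that interleaves new $t_b^{12}$ blocks with old commutators.
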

\begin{proof}
Let $v=t_{a}t_{c}(b)$. Since $a$ is disjoint from $c$, $t_c^{-1}t_a^{-1}(c)=c$ and $t_c^{-1}t_a^{-1}(a)=a$ (see Definition~\ref{primitive}). 
By the primitive braid relation and Lemma~\ref{lemma:10001} (2), we have 
\begin{align*}
t_{b} t_{v}(a) = t_{b} t_{a}t_{c}t_{b}t_{c}^{-1}t_{a}^{-1} (a) = t_{b} t_{a}t_{c}t_{b} (a) = c, \\
t_{b} t_{v}(c) = t_{b} t_{a}t_{c}t_{b}t_{c}^{-1}t_{a}^{-1} (c) = t_{b} t_{a}t_{c}t_{b} (c) = a.
\end{align*}
This gives the following two relations:
\begin{align}
t_{b}t_{v} \cdot t_{a}&\e t_{c} \cdot t_{b}t_{v}, \label{bva}\\
t_{b}t_{v} \cdot t_{c}&\e t_{a} \cdot t_{b}t_{v}. \label{bvc}
\end{align}
Note that using the primitive braid relation, we have
\begin{align*}
t_{a} t_{b} t_{c} t_{a} t_{b} t_{c} &\e t_{a} t_{b} t_{a} t_{c} t_{b} t_{c}  \\
&\e t_{a} \cdot t_{b} (t_{a} t_{c} t_{b} t_{c}^{-1} t_{a}^{-1}) \cdot t_{a} t_{c} t_{c}  \\
&\e t_{a} \cdot t_{b} t_{v} \cdot t_{a} t_{c} t_{c}. 
\end{align*}
This equation, the relations (\ref{bva}) and (\ref{bvc}), the commutative relation $t_{a}t_{c}=t_{c}t_{a}$ and a cyclic permutation give
\begin{align*}
C_3 \e t_{a}^4 t_{c}^4 (t_{b} t_{v})^2 t_{d}^{-1}t_{d'}^{-1}. 
\end{align*}
When we take $n$-th power of this relation, by the property of boundary curves $d,d^\prime$, the relations (\ref{bva}) and (\ref{bvc}) and the commutative relation $t_{a}t_{c}=t_{c}t_{a}$, we have 
\begin{align}\label{relation10000}
C_3^n \e t_{a}^{4n} t_{c}^{4n} (t_{b} t_{v})^{2n} t_{d}^{-n} t_{d'}^{-n}. 
\end{align}
By this equation and the primitive braid relations, we have
\begin{align*}
C_3^n &\e t_{a}^{4n} t_{c}^{4n} (t_{b}^4 \cdot t_{b}^{-1} (t_{b}^{-2} t_{v} t_{b}^2) t_{b}^{-1} t_{v})^n t_{d}^{-n} t_{d'}^{-n}  \\
&\e t_{a}^{4n} t_{c}^{4n} (t_{b}^4 \cdot t_{b}^{-1} t_{t_{b}^{-2}(v)} t_{b}^{-1} t_{v})^n t_{d}^{-n} t_{d'}^{-n} . 
\end{align*}
Here, when we set $\phi_3 := t_{a} t_{c} t_{b}^3$ in $\CM_1^2$, $\phi_3(b) = t_{a} t_{c}(b) = v$ and $\phi_3(t_{b}^{-2}(v)) = t_{a} t_{c} t_{b}(v) = t_{a} t_{c} t_{b} t_{a} t_{c}(b)$. 
From the commutative and the braid relations, we have 
\begin{align*}
t_{a} t_{c} t_{b} t_{a} t_{c} = t_{a} t_{c} t_{b} t_{c} t_{a} = t_{a} t_{b} t_{c} t_{b} t_{a}. 
\end{align*}
By Lemma~\ref{lemma:10001} (2), we see that
\begin{align*}
t_{a} t_{b} t_{c} t_{b} t_{a}(b) = t_{a} t_{b} t_{c}(a) = b, 
\end{align*}
so $\phi_3(t_{b}^{-2}(v))=b$. 
Therefore, $\phi_3$ maps $(b,t_{b}^{-2}(v))$ to $(v,b)$. 
This gives 
\begin{align*}
C_3^n \e t_{a}^{4n} t_{c}^{4n} (t_{b}^4 \cdot [t_{b}^{-1} t_{t_{b}^{-2}(v)},\phi_3])^n t_{d}^{-n} t_{d'}^{-n}
\end{align*}
from Lemma~\ref{lemma:67}. 
Therefore, by Lemma~\ref{lemma:31} (2), we obtain the following relation:
\begin{align}\label{relation100}
C_3^n \e t_{a}^{4n} t_{c}^{4n} t_{b}^{4n} \cdot \prod_{i=1}^n {}_{t_{b}^{-4(i-1)}} ([t_{b}^{-1} t_{t_{b}^{-2}(v)}, \phi_3])  \cdot t_{d}^{-n} t_{d'}^{-n}. 
\end{align}
Note that the conjugation of a commutator is also a commutator, and that we have 
\begin{align*}
t_{a}^{4n} t_{c}^{4n} t_{b}^{4n} &\e t_{b}^{12n} \cdot t_{b}^{-4n} (t_{b}^{-8n} t_{a}^{4n} t_{b}^{8n}) t_{b}^{-4n} (t_{b}^{-4n} t_{c}^{4n} t_{b}^{4n})  \\
&\e t_{b}^{12n} \cdot t_{b}^{-4n} t_{t_{b}^{-8n}(a)}^{4n} t_{b}^{-4n} t_{t_{b}^{-4n}(c)}^{4n}. 
\end{align*}
Since $t_at_bt_c$ maps $(a,b)$ to $(b,c)$ by Lemma~\ref{lemma:10001} (2), we find that $t_b^{-4n}t_at_bt_ct_b^{8n}$, denoted $\phi_4$, maps $(b,t_{b}^{-8n}(a))$ to $(t_{b}^{-4n}(c),b)$, so Lemma~\ref{lemma:67} gives 
\[t_{a}^{4n} t_{c}^{4n} t_{b}^{4n} \e t_{b}^{12n} \cdot [t_{b}^{-4n} t_{t_{b}^{-8n}(a)}^{4n}, \phi_4], \] 
and this establishes the formula.
\end{proof}

Theorem~\ref{thmD} (1) directly follows from Theorem~\ref{thm:1000} below, which will be used to prove Theorem~\ref{thmB}, since the left hand side of the equation in it is relator. 
\begin{thm}\label{thm:1000}
Let $s_0$ be a nonseparating curve on $\Sigma_g$ for $g\geq 2$ and $C_{3,j}$ a $3$-chain relator. 
For any integer $n$, there are elements $\mathcal{V}_1,\mathcal{W}_1,\mathcal{V}_2,\mathcal{W}_2,\ldots,\mathcal{V}_{|n|+3},\mathcal{W}_{|n|+3}$ in $\CM_g$ such that 
\begin{align*}
\prod_{j=1}^{g-1} C_{3,j}^n \e  t_{s_0}^{10(g-1)n} [\mathcal{V}_1,\mathcal{W}_1] [\mathcal{V}_2,\mathcal{W}_2] \cdots [\mathcal{V}_{|n|+3},\mathcal{W}_{|n|+3}]. 
\end{align*}
\end{thm}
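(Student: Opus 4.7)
My plan is to apply Proposition~\ref{prop:2} to $g-1$ suitably chosen $3$-chain relators embedded in $\Sigma_g$ and then to collapse the outputs into the claimed form.

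First I arrange the geometry. Decompose $\Sigma_g$ cyclically into $g-1$ genus-one subsurfaces $N_1,\dots,N_{g-1}\cong\Sigma_1^2$ glued along $g-1$ disjoint non-separating circles $e_1,\dots,e_{g-1}$ (indices read modulo $g-1$), with $N_j\cap N_{j+1}=e_j$; an Euler-characteristic count confirms that $g-1$ copies of $\Sigma_1^2$ glued in a cycle form a closed surface of genus~$g$. In each $N_j$ choose $3$-chain curves $a_j,b_j,c_j$ with boundaries $d_j=e_{j-1}$ and $d_j'=e_j$, and denote the associated $3$-chain relator $C_{3,j}$. The cyclic symmetry supplies an element $F\in\CM_g$ which is a rotation of the chain, satisfying $F(b_j)=b_{j+1}$ and $F(e_j)=e_{j+1}$ for $j=1,\dots,g-2$. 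Using that any two non-separating curves on $\Sigma_g$ are in the same mapping class orbit, I may assume $s_0=b_{g-1}=e_{g-1}$ up to replacing it by a conjugate.

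Applying Proposition~\ref{prop:2} to each $C_{3,j}^n$ yields
\[
C_{3,j}^n\e t_{b_j}^{12n}\cdot\prod_{i=1}^{|n|+1}[V_{j,i},W_{j,i}]\cdot t_{e_{j-1}}^{-n}t_{e_j}^{-n},
\]
with every commutator supported in the interior of $N_j$. Multiplying these relations for $j=1,\dots,g-1$ and using that non-adjacent $N_j$'s are disjoint, together with the fact that a Dehn twist along a shared boundary $e_j$ commutes with anything supported in either adjacent subsurface (Remark~\ref{fundamental}(2)), I can freely rearrange via primitive braid relations into
\[
\prod_{j=1}^{g-1}C_{3,j}^n\e\Bigl(\prod_{j=1}^{g-1}t_{b_j}^{12n}\Bigr)\cdot\Bigl(\prod_{j=1}^{g-1}\prod_{i=1}^{|n|+1}[V_{j,i},W_{j,i}]\Bigr)\cdot\Bigl(\prod_{j=1}^{g-1}t_{e_j}^{-2n}\Bigr),
\]
where each $e_j$ now appears with exponent $-2n$ because it enters the boundary block once as $d_j'$ and once as $d_{j+1}$.

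Next I shrink the commutator count. For each fixed $i$, the $g-1$ commutators $[V_{1,i},W_{1,i}],\dots,[V_{g-1,i},W_{g-1,i}]$ lie in disjoint subsurfaces, so iterating Lemma~\ref{lemma:10} merges them into a single commutator, reducing the middle block to $|n|+1$ commutators. For the leading block I apply Lemma~\ref{lemma:8}(1) to the disjoint ordered tuple $(b_1,\dots,b_{g-1})$ with the rotation $F$, obtaining
\[
\prod_{j=1}^{g-1}t_{b_j}^{12n}\e [U_b,F]\cdot t_{b_{g-1}}^{12(g-1)n}=[U_b,F]\cdot t_{s_0}^{12(g-1)n},
\]
one additional commutator. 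The same lemma applied to $(e_1,\dots,e_{g-1})$ with the same $F$ gives $\prod_{j=1}^{g-1}t_{e_j}^{-2n}\e[U_e,F]\cdot t_{s_0}^{-2(g-1)n}$, a final additional commutator. Combining, $t_{s_0}^{12(g-1)n}\cdot t_{s_0}^{-2(g-1)n}=t_{s_0}^{10(g-1)n}$, and I can slide this power all the way to the left by conjugating each of the $|n|+3$ commutators along the way by a power of $t_{s_0}$, which keeps each a commutator and introduces no new ones. This delivers the desired factorisation.

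The main obstacle is the geometric setup in the first paragraph: showing that the cyclic decomposition of $\Sigma_g$ into $g-1$ copies of $\Sigma_1^2$ really admits a single rotation $F\in\CM_g$ that simultaneously permutes the $3$-chain curves $b_j$ and the gluing circles $e_j$, and that the ordered tuples $(b_1,\dots,b_{g-1})$ and $(e_1,\dots,e_{g-1})$ satisfy the hypotheses of Lemma~\ref{lemma:8}(1); once this is in place, the rest of the argument is a bookkeeping exercise combining Proposition~\ref{prop:2}, Lemma~\ref{lemma:8}, and Lemma~\ref{lemma:10}.
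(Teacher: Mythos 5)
Your overall strategy is the same as the paper's (cyclic decomposition of $\Sigma_g$ into $g-1$ copies of $\Sigma_1^2$, Proposition~\ref{prop:2} on each piece, merging the commutators with Lemma~\ref{lemma:10}, and handling the leftover twist blocks with the rotation via Lemma~\ref{lemma:8}), but the endgame has a genuine gap. The identification ``$s_0=b_{g-1}=e_{g-1}$'' is impossible: $b_{g-1}$ lies in the interior of the subsurface $N_{g-1}$ while $e_{g-1}$ is one of its boundary circles, so they are disjoint, non-isotopic curves; each is nonseparating, hence conjugate to $s_0$, but they cannot both be taken equal to $s_0$ simultaneously. Consequently your two residual blocks are $t_{b_{g-1}}^{12(g-1)n}$ and $t_{e_{g-1}}^{-2(g-1)n}$ along \emph{different} curves, and the step ``$t_{s_0}^{12(g-1)n}\cdot t_{s_0}^{-2(g-1)n}=t_{s_0}^{10(g-1)n}$'' is unjustified. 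Absorbing the mismatch $t_{b_{g-1}}^{2(g-1)n}t_{e_{g-1}}^{-2(g-1)n}$ into a commutator (Lemma~\ref{lemma:67} with an $f$ taking $b_{g-1}$ to $e_{g-1}$) costs one more commutator, so your count becomes $|n|+4$, not $|n|+3$.

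The paper recovers the lost commutator by \emph{not} using part (1) of Lemma~\ref{lemma:8} twice: it uses parts (2) and (3) so that the two rotation commutators come out as $[B,r]$ and $[r,D]$, checks that $[B,r]$ fixes $d_{g-1}$ (hence commutes with the intervening twist power modulo $P$), and then merges them into the single commutator $[BD^{-1},DrD^{-1}]$ via Lemma~\ref{XYYZ}. The commutator saved there is then spent on $t_{b_{g-1}}^{2(g-1)n}t_{d_{g-1}}^{-2(g-1)n}\e[t_{b_{g-1}}^{2(g-1)n},f]$, yielding exactly $t_{b_{g-1}}^{10(g-1)n}$ times $|n|+3$ commutators. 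Note that with your setup, where both rotation commutators have the form $[\,\cdot\,,F]$, Lemma~\ref{XYYZ} does not apply directly, so you would also need to switch one of them to the form $[F,\,\cdot\,]$ (as in Lemma~\ref{lemma:8}(3)) before merging. With these corrections your argument becomes the paper's proof.
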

\begin{proof}
Let us consider the simple closed curves $a_1,b_1,c_1$ on the genus-1 subsurface $S_1^2$ of $\Sigma_g$ bounded by $d_1,d_{g-1}$ as in Figure~\ref{Li}. 
Then, we obtain the 3-chain relator $C_{3,1}:=t_{d_{g-1}}^{-1}t_{d_1}^{-1}(t_{a_1}t_{b_1}t_{c_1})^4$. 
By Proposition~\ref{prop:2}, the relation
\begin{align*}
C_{3,1}^n \e t_{b_1}^{10n}[V_{1,1},W_{1,1}] [V_{2,1},W_{2,1}] \cdots [V_{|n|+1,1},W_{|n|+1},1] t_{d_{g-1}}^{-n} t_{d_1}^{-n},
\end{align*}
holds in $\CM(S_1^2)$ for any integer $n$, where $V_{i,1},W_{i,1}$ are some elements in $\CM(S_1^2)$. 

  \begin{figure}[hbt]
  \centering
       \includegraphics[scale=.80]{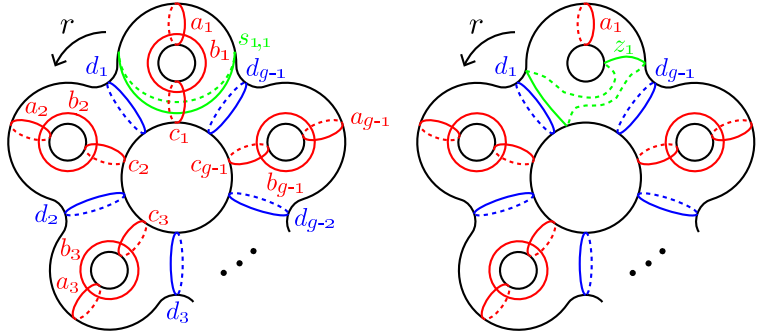}
       \caption{The rotation $r$ of $\Sigma_g$ and the curves $a_1,b_1,c_1,d_1,d_{g-1},s_{1,1},z_1$.}
       \label{Li}
  \end{figure}
Let $r$ be the rotation of $\Sigma_g$ by $2\pi/(g-1)$ as in Figure~\ref{Li}. 
We set
\begin{align*}
&C_{3,j} := {}_{r^{j-1}}(C_{3,1}),& && \\
&b_j := r^{j-1}(b_1),& &d_j := r^{j-1}(d_1),& \\
&V_{i,j} := {}_{r^{j-1}}(V_i)& &W_{i,j} := {}_{r^{j-1}}(W_i)& 
\end{align*}
for $j=1,2,\ldots,g-1$. 
Note that $b_g = b_1$, $d_g=d_1$. 
Then, using the primitive braid relations, the relation holds in $\CM(r^{j-1}(S_1^2))$:
\[ C_{3,j}^n \e t_{b_j}^{10n}[V_{1,j},W_{1,j}] [V_{2,j},W_{2,j}] \cdots [V_{|n|+1,j},W_{|n|+1},j] t_{d_j}^{-n} t_{d_{j+1}}^{-n} \]
for $j=1,2,\ldots,g-1$. 
Here, any simple closed curves on $\mathrm{Int}(r^{j-1}(S_1^2))$ are disjoint from any simple closed curves on $\mathrm{Int}(r^{j^\prime-1}(S_1^2))$ if $j\neq j^\prime$, and $d_j,d_{j+1}$ are boundary curves of of $r^{j-1}(S_1^2)$. 
Hence, for any elements $e_j$ in $\CM(r^{j-1}(S_1^2))$ and any element $f_{j^\prime}$ in $\CM(r^{j^\prime-1}(S_1^2))$, we have $e_j f_{j^\prime} = f_{j^\prime} e_j$ by the commutative relations and the property of boundary curves if $j\neq j^\prime$. 
From Lemma~\ref{lemma:10} and $d_g=d_1$, we have 
\begin{align*}
\prod_{j=1}^{g-1} C_{3,j}^n &\e  \prod_{j=1}^{g-1} t_{b_j}^{10n} \cdot \prod_{i=1}^{|n|+1}[\mathcal{V}_i,\mathcal{W}_i] \cdot \prod_{j=1}^{g-1} t_{d_j}^{-2n} \\
&\e  \prod_{j=1}^{g-1} t_{b_j}^{10n}  \cdot \prod_{j=1}^{g-1} t_{d_j}^{-2n} \cdot \prod_{i=1}^{|n|+1}[\mathcal{V}_i,\mathcal{W}_i]
\end{align*}
where $\mathcal{V}_i = V_{i,1} V_{i,2} \cdots V_{i,g-1}$ and $\mathcal{W}_i = W_{i,1} W_{i,2} \cdots W_{i,g-1}$. 
Using Lemma~\ref{lemma:8} (2) and (3), we see that
\begin{align*}
&\prod_{j=1}^{g-1}t_{b_j}^{12n} \e t_{b_{g-1}}^{12(g-1)n} [B,r], \\
&\prod_{j=1}^{g-1} t_{d_j}^{-2n} \e  t_{d_{g-1}}^{-2(g-1)n} [r,D],
\end{align*}
where $B := t_{b_1}^{12n} t_{b_2}^{24n} \cdots t_{b_{g-2}}^{12(g-2)n}$ and $D := t_{d_1}^{2n} t_{d_2}^{4n} \cdots t_{d_{g-2}}^{2(g-2)n}$. 
This gives 
\begin{align*}
\prod_{j=1}^{g-1}C_{3,j}^n \e t_{b_{g-1}}^{12(g-1)n} [B,r]  \cdot t_{d_{g-1}}^{-2(g-1)n} [r,D] \cdot \prod_{i=1}^{|n|+1}[\mathcal{V}_i,\mathcal{W}_i],
\end{align*}
Since $b_j,a_j$ are disjoint from each $d_g$ for any $j$, we have $B(d_g)=d_g$. 
This gives $[B,r](d_{g-1})=BrB^{-1}r^{-1}(d_{g-1})=d_{g-1}$, so we have 
$[B,r] t_{d_{g-1}}^{12(g-1)n} \e t_{d_{g-1}}^{12(g-1)n} [B,r]$. 
From this and Lemma~\ref{XYYZ}, we obtain 
\begin{align*}
\prod_{j=1}^{g-1}C_{3,j}^n \e t_{b_{g-1}}^{12(g-1)n} t_{d_{g-1}}^{-2(g-1)n} \cdot [BD^{-1},DrD^{-1}] \cdot \prod_{i=1}^{|n|+1}[\mathcal{V}_i,\mathcal{W}_i].
\end{align*}
Since $b_{g-1}$ and $d_{g-1}$ are nonseparating, there exists a diffeomorphism $f$ satisfying $f(b_{g-1})=d_{g-1}$. 
Therefore, by Lemma~\ref{lemma:67} we have 
\begin{align*}
t_{b_{g-1}}^{12(g-1)n} t_{d_{g-1}}^{-2(g-1)n} &= t_{b_{g-1}}^{10(g-1)n} \cdot t_{b_{g-1}}^{2(g-1)n} t_{d_{g-1}}^{-2(g-1)n} \\
&\e t_{b_{g-1}}^{10(g-1)n} [t_{b_{g-1}}^{2(g-1)n},f], 
\end{align*}
and this proves Theorem~\ref{thm:1000} and therefore Theorem~\ref{thmD} (1). 
\end{proof}
\begin{rem}
M. Korkmaz spoke an interesting proof giving an upper bound on $\scl_{\CM_g}(t_{s_0})$ at Max Plank, 2013 (see \cite{MaxPlank}). 
The main idea is to use his result of \cite{ko} and quasi-morphisms and to consider $[\frac{g}{2}]$ disjoint genus-2 subsurfaces of $\Sigma_g$ with one boundary. 
The proof of Theorem~\ref{thmD} is much inspired by his idea. 
\end{rem}

\section{Surface bundles with base of genus two}\label{Surface bundles with base of genus two}
In this section, we prove Theorem~A.

Throughout this section, we suppose that $g\geq 39$. 
Let us consider $\Sigma_g^1$ with one boundary component $\partial$ as in Figure~\ref{surfaceS}. 
Then, we can take $13$ disjoint subsurfaces $S_1,S_2,\ldots,S_{12}$ and $S$ of genus $3$ with one boundary component and an element $\Phi$ in $\CM_g^1$ such that $\Phi(S_i)=S_{i+1}$, $\Phi(S_{12})=S_1$ and $\Phi|_S=\mathrm{id}|_S$ as in Figure~\ref{surfaceS}. 
  \begin{figure}[hbt]
  \centering
       \includegraphics[scale=.80]{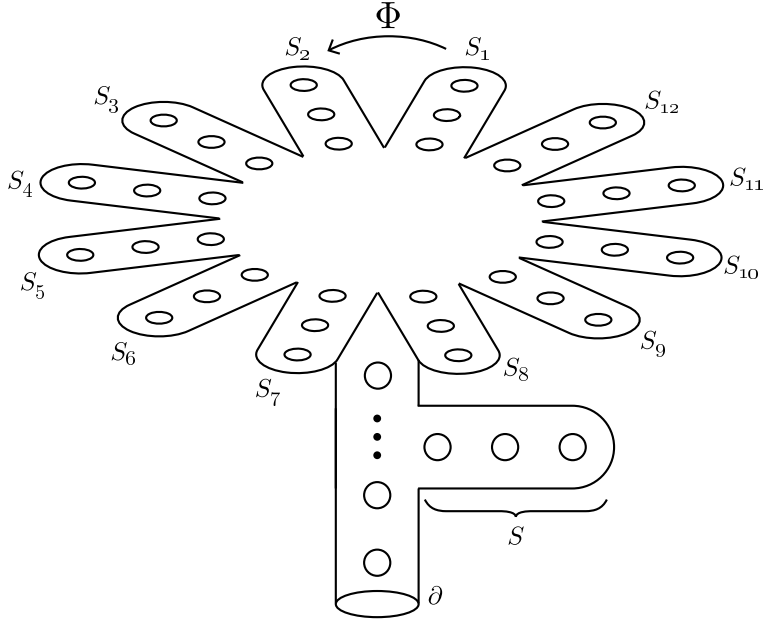}
       \caption{The subsurfaces $S,S_1,S_2,\ldots,S_{12}$ of $\Sigma_g^1$.}
       \label{surfaceS}
  \end{figure}

Let $\alpha_1,\beta_1,\gamma_1,\delta_1,\epsilon_1,\zeta_1,x_1,y_1,z_1$ be the simple closed curves on $S_1$ as Figure~\ref{L0L12}, and let $a_1,b_1,s_{1,1},d_1,d_2,\gamma, \delta, y, z, \epsilon, \zeta$ be simple closed curves on $S$ as in the figure. 
  \begin{figure}[hbt]
  \centering
       \includegraphics[scale=.90]{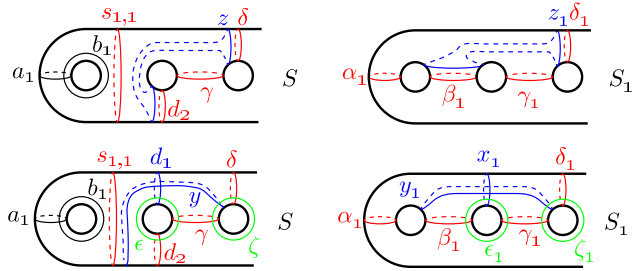}
       \caption{The curves $a_1,b_1,s_{1,1},d_1,d_2,\gamma, \delta, \epsilon, \zeta, y, z$ on $S$ and the curves $\alpha_1,\beta_1,\gamma_1,\delta_1,\epsilon_1,\zeta_1,x_1,y_1,z_1$ on $S_1$.}
       \label{L0L12}
  \end{figure}
We consider the following two lantern relators $L_1$ and $L$: 
\begin{align*}
L_1 &:= t_{\alpha_1}^{-1} t_{\delta_1}^{-1} t_{\gamma_1}^{-1} t_{\beta_1}^{-1} t_{x_1} t_{y_1} t_{z_1} , \\
L&:= t_{d_1} t_y t_z t_\delta^{-1} t_\gamma^{-1} t_{d_2}^{-1} t_{s_{1,1}}^{-1}. 
\end{align*}
The next lemma was proved in \cite{ko1}. 
\begin{lem}[\cite{ko1}]\label{example}
Set $\psi_1= t_{\epsilon_1} t_{\gamma_1} t_{x_1} t_{\epsilon_1}$, $\omega_1= t_{\epsilon_1} t_{z_1} t_{\beta_1} t_{\epsilon_1} t_{\zeta_1} t_{\delta_1} t_{y_1} t_{\zeta_1}$, $\psi= t_\epsilon t_\gamma t_{d_1} t_\epsilon$ and $\phi=t_\zeta t_y t_\delta t_\zeta t_\epsilon t_{d_2} t_z t_\epsilon$. 
The followings hold in $\CM(S_1)$ and $\CM(S)$, respectively:
\begin{align*}
L_1 &\e [t_{x_1},\psi_1] \cdot [t_{y_1} t_{\beta_1}^{-1},\omega_1] \cdot t_{\alpha_1}^{-1}, \\ 
L^{-1} &\e t_{s_{1,1}} \cdot [t_\delta t_z^{-1},\phi] \cdot [t_\gamma,\psi], 
\end{align*}
\end{lem}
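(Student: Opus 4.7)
The plan is to recognize each of the four helper elements $\psi_1,\omega_1,\psi,\phi$ as a product of one or two blocks of the form $t_bt_ct_at_b$, read off from Lemma~\ref{lemma:10001}(1) or Lemma~\ref{lemma:10002} which tuple of curves it sends to which, and then invoke Lemma~\ref{lemma:67} to convert the corresponding pair-swap product of Dehn twists into a commutator. The final rearrangement into the lantern relator is then by commutative relations among the pairwise-disjoint lantern boundary curves together with their disjointness from the interior lantern curves.

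For the first identity, $\psi_1=t_{\epsilon_1}t_{\gamma_1}t_{x_1}t_{\epsilon_1}$ has the form $t_bt_ct_at_b$; reading the intersections $i(x_1,\epsilon_1)=i(\epsilon_1,\gamma_1)=1$ off Figure~\ref{L0L12}, Lemma~\ref{lemma:10001}(1) gives $\psi_1(x_1)=\gamma_1$, so Lemma~\ref{lemma:67} yields $t_{x_1}t_{\gamma_1}^{-1}\e[t_{x_1},\psi_1]$. Likewise $\omega_1$ is a product of two such blocks, so Lemma~\ref{lemma:10002} (after verifying the transverse intersections $i(\beta_1,\epsilon_1)=i(\epsilon_1,z_1)=i(y_1,\zeta_1)=i(\zeta_1,\delta_1)=1$ and that $\beta_1$ is disjoint from $y_1,\zeta_1,\delta_1$, while $\delta_1$ is disjoint from $\beta_1,\epsilon_1,z_1$) shows $\omega_1$ sends $(\beta_1,y_1)$ to $(z_1,\delta_1)$, and Lemma~\ref{lemma:67} with $(k_1,k_2)=(-1,1)$ together with the commutative relations $t_{\beta_1}^{-1}t_{y_1}\e t_{y_1}t_{\beta_1}^{-1}$ and $t_{\delta_1}^{-1}t_{z_1}\e t_{z_1}t_{\delta_1}^{-1}$ gives $t_{y_1}t_{\beta_1}^{-1}t_{z_1}t_{\delta_1}^{-1}\e[t_{y_1}t_{\beta_1}^{-1},\omega_1]$. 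Stacking these and appending $t_{\alpha_1}^{-1}$,
\[[t_{x_1},\psi_1]\cdot[t_{y_1}t_{\beta_1}^{-1},\omega_1]\cdot t_{\alpha_1}^{-1}\e t_{x_1}t_{\gamma_1}^{-1}t_{y_1}t_{\beta_1}^{-1}t_{z_1}t_{\delta_1}^{-1}t_{\alpha_1}^{-1},\]
and the commutativity of $t_{\alpha_1},t_{\beta_1},t_{\gamma_1},t_{\delta_1}$ with each other and with $t_{x_1},t_{y_1},t_{z_1}$ (because the lantern boundary curves are pairwise disjoint and are each disjoint from the interior lantern curves) shuffles the right-hand side into $t_{\alpha_1}^{-1}t_{\delta_1}^{-1}t_{\gamma_1}^{-1}t_{\beta_1}^{-1}t_{x_1}t_{y_1}t_{z_1}=L_1$.

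The second identity is handled in parallel: writing $L^{-1}=t_{s_{1,1}}t_{d_2}t_\gamma t_\delta t_z^{-1}t_y^{-1}t_{d_1}^{-1}$, one applies Lemma~\ref{lemma:10001}(1) to $\psi=t_\epsilon t_\gamma t_{d_1}t_\epsilon$ with $(b,c,a)=(\epsilon,\gamma,d_1)$ to see $\psi(d_1)=\gamma$ and hence (using the swap part, since $i(d_1,\gamma)=0$ as lantern curves) $\psi(\gamma)=d_1$, giving $t_\gamma t_{d_1}^{-1}\e[t_\gamma,\psi]$ by Lemma~\ref{lemma:67}. Similarly, Lemma~\ref{lemma:10002} applied to the two blocks of $\phi=t_\zeta t_y t_\delta t_\zeta\cdot t_\epsilon t_{d_2}t_zt_\epsilon$ with $(b,c,a)=(\zeta,y,\delta)$ and $(\beta,\gamma,\alpha)=(\epsilon,d_2,z)$ shows $\phi$ sends $(\delta,z)$ to $(y,d_2)$, whence Lemma~\ref{lemma:67} with $(k_1,k_2)=(1,-1)$ gives $t_\delta t_z^{-1}t_{d_2}t_y^{-1}\e[t_\delta t_z^{-1},\phi]$. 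Concatenating $t_{s_{1,1}}\cdot[t_\delta t_z^{-1},\phi]\cdot[t_\gamma,\psi]$ and commuting the pairwise-disjoint lantern boundary twists $t_{s_{1,1}},t_{d_2},t_\gamma,t_\delta$ past the interior twists $t_{d_1},t_y,t_z$ yields $L^{-1}$.

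The main obstacle is the careful reading of Figure~\ref{L0L12}: one must verify each of the transverse-intersection conditions ($i(\cdot,\cdot)=1$) and auxiliary disjointness conditions required by Lemmas~\ref{lemma:10001} and \ref{lemma:10002} for all four helper elements, and then confirm that the lantern configurations on $S_1$ and on $S$ have the expected pattern of pairwise disjointness between the boundary curves and between boundary and interior curves, so that the final commutative rearrangements are justified. Once these geometric conditions are checked off the figure, the algebraic manipulations above are essentially forced.
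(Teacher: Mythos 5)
Your argument is correct and follows essentially the same route as the paper: rewrite $L_1$ and $L^{-1}$ by the commutative relations among the disjoint lantern curves, use Lemmas~\ref{lemma:10001} and~\ref{lemma:10002} to see that $\psi_1,\omega_1,\phi,\psi$ realize the needed curve (pair) exchanges, and then apply Lemma~\ref{lemma:67} to produce the commutators. Your explicit use of the swap part of Lemma~\ref{lemma:10001}(1) to get $\psi(\gamma)=d_1$ (rather than only $\psi(d_1)=\gamma$) is a careful touch, but otherwise the two proofs coincide.
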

\begin{proof}
Since $\alpha_1,\beta_1,\gamma_1,\delta_1$ (resp. $\delta,\gamma,d_2,s_{1,1}$) are disjoint from $x_1,y_1,z_1$ (resp. $d_1,y,z$) and disjoint from each other, the commutative relations give
\begin{align*}
L_1 &\e t_{x_1} t_{\gamma_1}^{-1} \cdot t_{y_1} t_{\beta_1}^{-1} t_{z_1} t_{\delta_1}^{-1} \cdot t_{\alpha_1}^{-1} \\
L^{-1} &\e t_{s_{1,1}} \cdot t_\delta t_z^{-1} t_{d_2} t_y^{-1} \cdot t_\gamma  t_{d_1}^{-1} . 
\end{align*}
By Lemma~\ref{lemma:10001} (2) and~\ref{lemma:10002}, $\psi_1$ maps $x_1$ to $\gamma_1$, $\omega_1$ maps $(y_1,\beta_1)$ to $(\delta_1,z_1)$, $\phi$ maps $(\delta,z)$ to $(y,d_2)$, and $\psi$ maps $d_1$ to $\gamma$. 
Lemma~\ref{lemma:67} gives the required formulas. 
\end{proof}

For a simple closed curve $c_1$ on $S_1$ appeared in the above, we set $c_i:=\Phi^{i-1}(c_1)$ which is a simple closed curve on $S_i$, and we write a lantern relation $L_i:={}_{\Phi^{i-1}}(L_1)$. 
From Lemma~\ref{example} and the primitive braid relation, we obtain 
\begin{align*}
\psi_i &= t_{\epsilon_i} t_{\gamma_i} t_{x_i} t_{\epsilon_i}, \\ 
\omega_i &= t_{\epsilon_i} t_{z_i} t_{\beta_i} t_{\epsilon_i} t_{\zeta_i} t_{\delta_i} t_{y_i} t_{\zeta_i}, \\
L_i &\e [t_{x_i},\psi_i]  [t_{y_i} t_{\beta_i}^{-1},\omega_i]  t_{\alpha_i}^{-1} 
\end{align*}
for $i=1,2,\ldots,12$. 
Moreover, we define a 2-chain relator $C_{2,1}$ to be
\begin{align*}
C_{2,1}&:=(t_{a_1}t_{b_1})^6 t_{s_{1,1}}^{-1}. 
\end{align*}

The following proposition is the key result to prove Theorem~\ref{thmA}. 
\begin{prop}\label{prop:10}
For $g\geq 39$, there are elements $\widetilde{\mathcal{A}}_1,\widetilde{\mathcal{B}}_1,\widetilde{\mathcal{C}}_1,\widetilde{\mathcal{D}}_1$ in $\CM_g^1$ such that 
\[L_1L_2\cdots L_{12} C_{2,1}L^{-1} \e [\widetilde{\mathcal{A}}_1,\widetilde{\mathcal{B}}_1][\widetilde{\mathcal{C}}_1,\widetilde{\mathcal{D}}_1],\]
\end{prop}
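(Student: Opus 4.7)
The plan is a systematic collect-and-merge argument that exploits both the cyclic $\Phi$-symmetry among the subsurfaces $S_1,\ldots,S_{12}$ and the disjointness of all of $S,S_1,\ldots,S_{12}$. I will substitute the commutator factorizations of Lemma~\ref{example}, reshuffle factors by disjointness, and then use Lemmas~\ref{lemma:10},~\ref{lemma:8}, and~\ref{lemma:67} to fuse everything into just two commutators.

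First I substitute Lemma~\ref{example}: conjugation by $\Phi^{i-1}$ combined with the primitive braid relation gives $L_i \e [t_{x_i},\psi_i]\,[t_{y_i}t_{\beta_i}^{-1},\omega_i]\,t_{\alpha_i}^{-1}$ for every $i$, while the $t_{s_{1,1}}^{-1}$ at the tail of $C_{2,1}=(t_{a_1}t_{b_1})^6 t_{s_{1,1}}^{-1}$ cancels the leading $t_{s_{1,1}}$ in the factorization of $L^{-1}$. Because every Dehn twist supported on $S_i$ commutes modulo $P$ with every Dehn twist supported on $S_j$ (for $i\neq j$) and with every Dehn twist supported on $S$, I can reorder factors to obtain
\[L_1\cdots L_{12}\,C_{2,1}\,L^{-1}\e\Bigl(\prod_{i=1}^{12}[t_{x_i},\psi_i]\Bigr)\Bigl(\prod_{i=1}^{12}[t_{y_i}t_{\beta_i}^{-1},\omega_i]\Bigr)\Bigl(\prod_{i=1}^{12}t_{\alpha_i}^{-1}\Bigr)(t_{a_1}t_{b_1})^6[t_\delta t_z^{-1},\phi][t_\gamma,\psi].\]

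Next, eleven iterated applications of Lemma~\ref{lemma:10} to each of the first two products — with the required pairwise commutations automatic since the supports on distinct $S_i$'s are disjoint — collapse them into the single commutators $[t_{x_1}\cdots t_{x_{12}},\psi_1\cdots\psi_{12}]$ and $[t_{y_1}t_{\beta_1}^{-1}\cdots t_{y_{12}}t_{\beta_{12}}^{-1},\omega_1\cdots\omega_{12}]$. Two further uses of Lemma~\ref{lemma:10} absorb the $S$-supported commutators $[t_\delta t_z^{-1},\phi]$ and $[t_\gamma,\psi]$ into these two, again by disjointness of $S$ from every $S_i$. For the remaining Dehn-twist tail $\prod_i t_{\alpha_i}^{-1}\cdot(t_{a_1}t_{b_1})^6$, I invoke Lemma~\ref{lemma:8}(1) with $f=\Phi$ (using $\Phi(\alpha_i)=\alpha_{i+1}$) and $k_1=\cdots=k_{12}=-1$ to rewrite $\prod_i t_{\alpha_i}^{-1}$ as a commutator with $\Phi$ times the residue $t_{\alpha_{12}}^{-12}$. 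The $2$-chain identity $(t_{a_1}t_{b_1})^6=t_{s_{1,1}}$ on $S$, together with a diffeomorphism carrying $\alpha_{12}$ to $s_{1,1}$ (or to a suitable conjugate), allows me to express $t_{\alpha_{12}}^{-12}(t_{a_1}t_{b_1})^6$ as a single commutator via Lemma~\ref{lemma:67}. A final application of Lemma~\ref{XYYZ} together with Lemma~\ref{lemma:10} then fuses the four remaining commutators into the required pair $[\widetilde{\mathcal{A}}_1,\widetilde{\mathcal{B}}_1][\widetilde{\mathcal{C}}_1,\widetilde{\mathcal{D}}_1]$.

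The hard part will be this final fusion step: once the $\Phi$-commutator enters, its support no longer avoids those of the big $L_i$-derived commutators, so the pairwise-commutation hypothesis of Lemma~\ref{lemma:10} is not automatic and the merging must be executed by hand, using Lemma~\ref{XYYZ} together with carefully chosen conjugating diffeomorphisms to get the exponents and supports to line up. The assumption $g\geq 39=13\cdot 3$ enters precisely here: it is exactly what permits twelve disjoint genus-$3$ subsurfaces hosting the lanterns $L_i$ to coexist with the extra genus-$3$ subsurface $S$ carrying $C_{2,1}$ and $L$, so that the cyclic $\Phi$-action and all the disjointness-based reshuffles are simultaneously available.
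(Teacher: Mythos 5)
Your overall collect-and-merge strategy matches the paper's up to the point where the residual Dehn twists must be packaged into a commutator, but that step --- the one you yourself flag as ``the hard part'' --- is where the argument genuinely breaks. You propose to handle $t_{\alpha_{12}}^{-12}(t_{a_1}t_{b_1})^6$ by invoking the $2$-chain identity $(t_{a_1}t_{b_1})^6=t_{s_{1,1}}$ and then applying Lemma~\ref{lemma:67} with a diffeomorphism carrying $\alpha_{12}$ to $s_{1,1}$ (or a conjugate). This fails for three separate reasons. First, $\e$ only permits primitive braid relators, so substituting the $2$-chain relation is not a legal move inside a $\e$-manipulation (and it would also wreck the relator count $n(C_2)=1$ that Theorem~\ref{thmA} extracts from this proposition). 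Second, $\alpha_{12}$ is nonseparating while $s_{1,1}$ is separating, so no diffeomorphism whatsoever carries one to the other. Third, even ignoring curve types, Lemma~\ref{lemma:67} needs matching exponents, i.e.\ a word of the shape $t_{\alpha_{12}}^{-12}t_{s_{1,1}}^{+12}$, whereas you would have exponent $+1$ on $t_{s_{1,1}}$; the word $t_{\alpha_{12}}^{-12}t_{s_{1,1}}$ is simply not of the form the lemma handles. So the residue cannot be absorbed by the route you describe, and your final count does not come down to two commutators.

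The paper avoids this by never leaving $(t_{a_1}t_{b_1})^6$ intact: using the commutator factorization of the chain relator from Proposition~\ref{prop:2} (equation (\ref{relation100}) with $n=1$), $C_{2,1}L^{-1}$ is rewritten modulo $P$ as $t_{a_1}^8t_{b_1}^4[V_1,W_1][t_\delta t_z^{-1},\phi][t_\gamma,\psi]$ (Lemma~\ref{2chainlanterncommutator}), so the surviving twist residue is $t_{a_1}^8t_{b_1}^4t_{\alpha_{12}}^{-12}$ --- all nonseparating curves, with exponents summing to zero. The half twist $H_s$ of a genus-$2$ subsurface containing $a_1,b_1,\alpha_{12}$ then gives $H=t_{a_1}t_{b_1}H_s$, which maps $(a_1,\alpha_{12})$ to $(\alpha_{12},b_1)$, and Lemma~\ref{lemma:67} turns the residue into the single commutator $[t_{a_1}^8t_{\alpha_{12}}^{-4},H]$; since $H$ is supported away from $d_1,\gamma,\epsilon$ and the $x_i,\gamma_i,\epsilon_i$, this commutator merges with $[t_\gamma X,\psi\Psi]$ by Lemma~\ref{lemma:10}, while $[A,\Phi]$ and the $\CM(S)$-commutators are merged into the other factor using Lemma~\ref{lemma:21} and $\Phi|_S=\mathrm{id}$. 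Some such device replacing $(t_{a_1}t_{b_1})^6$ by a twist word with controllable exponents is indispensable; without it your plan stalls at four commutators plus a leftover twist word.
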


To prove Proposition~\ref{prop:10}, we prepare two lemmas (Lemma~\ref{twelvetwo} and~\ref{2chainlanterncommutator}). 
\begin{lem}\label{twelvetwo}
For $g\geq 39$, the following relation holds in $\CM_g^1$:
\begin{align*}
L_1L_2\cdots L_{12} \e t_{\alpha_{12}}^{-12} [X, \Psi] [YA, \Omega \Phi], 
\end{align*}
where $X := t_{x_1} t_{x_2} \cdots t_{x_{12}}$, $\Psi := \psi_1 \psi_2 \cdots \psi_{12}$, $Y := t_{y_1} t_{\beta_1}^{-1} t_{y_2} t_{\beta_2}^{-1} \cdots t_{y_{12}} t_{\beta_{12}}^{-1}$, $\Omega := \omega_1 \omega_2 \cdots \omega_{12}$, and $A :=t_{\alpha_1}^{-1} t_{\alpha_2}^{-2} \cdots t_{\alpha_{11}}^{-11}$. 
\end{lem}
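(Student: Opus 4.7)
The strategy is to combine the twelve lantern relators $L_i$ using the $\Phi$-equivariant structure of the decomposition $\Sigma_g^1 \supset S_1\cup\cdots\cup S_{12}$ together with Lemmas~\ref{lemma:10} and~\ref{lemma:8}. The starting point is Lemma~\ref{example} and its $\Phi^{i-1}$-conjugate (obtained via the primitive braid relation): for each $i=1,\ldots,12$,
\[
L_i \e [t_{x_i},\psi_i]\,[t_{y_i}t_{\beta_i}^{-1},\omega_i]\,t_{\alpha_i}^{-1}.
\]
Since $S_1,\ldots,S_{12}$ are pairwise disjoint, any factor supported in $S_j$ commutes modulo $P$ with any factor supported in $S_k$ when $j\neq k$. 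Reorganising the product $L_1L_2\cdots L_{12}$ accordingly separates the three types of factors, yielding
\[
L_1L_2\cdots L_{12} \e \prod_{i=1}^{12}[t_{x_i},\psi_i]\cdot\prod_{i=1}^{12}[t_{y_i}t_{\beta_i}^{-1},\omega_i]\cdot\prod_{i=1}^{12}t_{\alpha_i}^{-1}.
\]

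I then collapse the first two products by iterated application of Lemma~\ref{lemma:10}; the required cross-commutations are immediate from the disjointness of the subsurfaces, and the outcomes are $[X,\Psi]$ and $[Y,\Omega]$ respectively. For the third product I invoke Lemma~\ref{lemma:8} (2) with $f=\Phi$ and all exponents $k_i=-1$: the curves $\alpha_1,\ldots,\alpha_{12}$ are pairwise disjoint and $\Phi(\alpha_i)=\alpha_{i+1}$ for $i=1,\ldots,11$, so the hypotheses hold and
\[
\prod_{i=1}^{12}t_{\alpha_i}^{-1} \e t_{\alpha_{12}}^{-12}[A,\Phi].
\]
Because $\alpha_{12}$ is a boundary curve of the four-holed sphere sitting inside $S_{12}$, it is disjoint from every curve appearing in $X,\Psi,Y,\Omega$, so the factor $t_{\alpha_{12}}^{-12}$ commutes past $[X,\Psi][Y,\Omega]$ to the front of the product.

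What remains is to fuse $[Y,\Omega]\,[A,\Phi]$ into the single commutator $[YA,\Omega\Phi]$ via a second application of Lemma~\ref{lemma:10}, which demands the four cross-commutations $Y\leftrightarrow A$, $Y\leftrightarrow\Phi$, $\Omega\leftrightarrow A$, and $\Omega\leftrightarrow\Phi$ modulo $P$. The commutations involving $A$ follow from disjointness: within each $S_i$, the boundary curve $\alpha_i$ is disjoint from the interior curves $y_i,z_i$, from $\beta_i,\delta_i$, and from the auxiliary curves $\epsilon_i,\zeta_i$ supporting $\omega_i$, while for $j\neq i$ everything lies in disjoint subsurfaces. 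The commutations with $\Phi$ are the main obstacle of the proof. They rely on the cyclic symmetry $\Phi(S_i)=S_{i+1}$ (indices mod~$12$): under conjugation by $\Phi$, the products $Y$ and $\Omega$ become cyclic reorderings of themselves, and because each is a product of pairwise commuting factors supported in disjoint $S_i$'s, those reorderings equal $Y$ and $\Omega$ modulo $P$. Assembling everything yields the claimed identity.
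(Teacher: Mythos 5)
Your proposal is correct and follows essentially the same route as the paper: decompose each $L_i$ via Lemma~\ref{example}, merge by disjointness and Lemma~\ref{lemma:10} into $[X,\Psi][Y,\Omega]$ and the $t_{\alpha_i}^{-1}$ product, apply Lemma~\ref{lemma:8} with $f=\Phi$, and fuse $[Y,\Omega][A,\Phi]$ into $[YA,\Omega\Phi]$ using disjointness together with the $\Phi$-cyclic-symmetry commutations (the content of Lemma~\ref{lemma:21}). The only cosmetic deviations are that you use Lemma~\ref{lemma:8}~(2) and move $t_{\alpha_{12}}^{-12}$ to the front by disjointness, whereas the paper uses Lemma~\ref{lemma:8}~(1) followed by a cyclic permutation.
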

\begin{proof}
Since then $S_i$ is disjoint from $S_{i^\prime}$ for $i\neq i^\prime$, any elements in $\CM(S_i)$ can commute with any elements in $\CM(S_{i^\prime})$ modulo $P$ from the commutative relations. 
Therefore, by $L_i\e [t_{x_i},\psi_i]  [t_{y_i} t_{\beta_i}^{-1},\omega_i]  t_{\alpha_i}^{-1} \in \CM(S_i)$ and Lemma~\ref{lemma:10}, we have 
\begin{align*}
L_1L_2\cdots L_{12} \e [X, \Psi] [Y, \Omega] t_{\alpha_1}^{-1}t_{\alpha_2}^{-1} \cdots t_{\alpha_{12}}^{-1}. 
\end{align*}
By Lemma~\ref{lemma:8} (1) and the definition of the curve $\alpha_i$, we obtain
\begin{align*}
L_1L_2\cdots L_{12} \e [X,\Psi] [Y,\Omega][A, \Phi] t_{\alpha_{12}}^{-12}. 
\end{align*}
Since $\alpha_i$ is disjoint from $\beta_i,\delta_i,\epsilon_i,\zeta_i,y_i,z_i$ for $i=1,2,\ldots,12$ and $S_i$ is disjoint from $S_{i^\prime}$ for $i\neq i^\prime$, $A$ can commute with $Y, \Omega$ modulo $P$ by the commutative relations. 
Here, $\omega_i,\Omega$ (resp. $t_{y_i}t_{\beta_i}^{-1}$, $Y$) and $\Phi$ satisfy the condition of Lemma~\ref{lemma:21} from the commutative and the primitive braid relations, so $\Phi$ can commute with $\Omega$ (resp. $Y$) modulo $P$. 
Lemma~\ref{lemma:10} and a cyclic permutation give the required formula. 
\end{proof}
The next lemma will be also used to prove Theorem~\ref{thmC}. 
\begin{lem}\label{2chainlanterncommutator}
There are elements $V',W'$ in $\CM(S)$ such that the following relation holds in $\CM(S)$:
\begin{align*}
C_{2,1}L^{-1} \e [V', W'] [t_\gamma,\psi] t_{a_1}^8 t_{b_1}^4. 
\end{align*}
\end{lem}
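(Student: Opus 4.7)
The plan is to combine Lemma~\ref{example} for $L^{-1}$ with a braid-relation expansion of the $2$-chain word $(t_{a_1}t_{b_1})^6$, keeping track of supports so that everything on the $(a_1,b_1)$-side of $s_{1,1}$ can be moved freely past the two commutators coming from the lantern side.

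First I would substitute Lemma~\ref{example} into $C_{2,1}L^{-1}$:
$C_{2,1}L^{-1}=(t_{a_1}t_{b_1})^6\,t_{s_{1,1}}^{-1}\cdot L^{-1}\e (t_{a_1}t_{b_1})^6\,t_{s_{1,1}}^{-1}\,t_{s_{1,1}}\,[t_\delta t_z^{-1},\phi]\,[t_\gamma,\psi]$.
The adjacent occurrence $t_{s_{1,1}}^{-1}t_{s_{1,1}}$ is the trivial primitive braid relator (take $f=\mathrm{id}$ and $a=s_{1,1}$ in Definition~\ref{primitive}), so it may be dropped modulo $P$, giving $C_{2,1}L^{-1}\e (t_{a_1}t_{b_1})^6\,[t_\delta t_z^{-1},\phi]\,[t_\gamma,\psi]$. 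Because $a_1,b_1$ lie in the genus-$1$ subsurface of $S$ bounded by $s_{1,1}$ while $\gamma,\delta,\epsilon,\zeta,d_1,d_2,y,z$ (the curves entering $\phi$ and $\psi$) lie in the complementary $4$-holed sphere, the commutative relations let me slide $(t_{a_1}t_{b_1})^6$ freely past both commutators.

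The heart of the argument is to show
$(t_{a_1}t_{b_1})^6 \e [V'',W'']\cdot t_{a_1}^8 t_{b_1}^4$
for some $V'',W''$ supported in the genus-$1$ subsurface. Two applications of the braid relation $t_{a_1}t_{b_1}t_{a_1}\e t_{b_1}t_{a_1}t_{b_1}$ inside the cube give $(t_{a_1}t_{b_1})^3 \e t_{a_1}^2 t_{b_1}t_{a_1}^2 t_{b_1}$, hence $(t_{a_1}t_{b_1})^6 \e (t_{a_1}^2 t_{b_1})^4$. Expanding by Lemma~\ref{lemma:31}(2) with $X=t_{a_1}^2$, $Y=t_{b_1}$ and applying the primitive braid relations $t_{a_1}^{-2k}t_{b_1}t_{a_1}^{2k}\e t_{c_k}$ yields $(t_{a_1}^2 t_{b_1})^4 \e t_{a_1}^8\cdot t_{c_3}t_{c_2}t_{c_1}t_{b_1}$, where $c_k:=t_{a_1}^{-2k}(b_1)$. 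It remains to transform the trailing factor $t_{c_3}t_{c_2}t_{c_1}t_{b_1}$ into $[V'',W'']\,t_{b_1}^4$ modulo $P$; this is achieved by selecting an appropriate diffeomorphism $f$ of the genus-$1$ piece and invoking Lemma~\ref{lemma:67}, so that $t_{c_3}t_{c_2}t_{c_1}t_{b_1}\cdot t_{b_1}^{-4}$ becomes (a conjugate of) a single commutator.

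Reassembling, I have
$C_{2,1}L^{-1}\e [t_\delta t_z^{-1},\phi]\,[V'',W'']\,t_{a_1}^8 t_{b_1}^4\,[t_\gamma,\psi]$,
and $t_{a_1}^8 t_{b_1}^4$ commutes past $[t_\gamma,\psi]$ by disjoint supports. Since $V'',W''$ sit in the genus-$1$ subsurface, they commute modulo $P$ with $t_\delta,t_z$ and every Dehn twist inside $\phi$, so Lemma~\ref{lemma:10} merges $[t_\delta t_z^{-1},\phi]\,[V'',W'']$ into a single commutator $[V',W']$, delivering the claimed $C_{2,1}L^{-1}\e [V',W']\,[t_\gamma,\psi]\,t_{a_1}^8 t_{b_1}^4$.

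The main obstacle is the commutator step in the previous paragraph: the four curves $c_k=t_{a_1}^{-2k}(b_1)$ are not pairwise disjoint (as translates of $b_1$ under powers of $t_{a_1}$, they have positive pairwise intersection), so Lemma~\ref{lemma:8} is not directly available. One must instead craft $f$ carefully — most naturally as a power of $t_{a_1}$ combined with a curve-permuting map — so that Lemma~\ref{lemma:67} produces exactly the factor $t_{b_1}^4$ on the right, while the commutator $[V'',W'']$ remains supported disjointly from $t_\delta,t_z,\phi$ for the final merge via Lemma~\ref{lemma:10}.
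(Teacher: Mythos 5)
Your overall architecture (substitute Lemma~\ref{example}, cancel $t_{s_{1,1}}^{-1}t_{s_{1,1}}$, commute by disjoint supports, merge two commutators via Lemma~\ref{lemma:10}, finish with a cyclic permutation) matches the paper, but the proposal has a genuine gap exactly at its central claim, namely that $(t_{a_1}t_{b_1})^6 \e [V'',W'']\,t_{a_1}^8 t_{b_1}^4$ with $V'',W''$ supported in the genus-one piece. Your reduction to showing that $t_{c_3}t_{c_2}t_{c_1}t_{b_1}\cdot t_{b_1}^{-4}$ is a single commutator is fine as far as it goes, but the route you indicate cannot be completed as stated: Lemma~\ref{lemma:67} would require an element $f$ carrying the tuple $(c_3,c_2,c_1,b_1)$ to $(b_1,b_1,b_1,b_1)$, which is impossible since these four curves are pairwise non-isotopic (they have pairwise positive geometric intersection, as you note), and Lemma~\ref{lemma:8} is unavailable for the same reason. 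You acknowledge this as ``the main obstacle'' and only gesture at ``crafting $f$ carefully,'' so the one step that carries all the content of the lemma is left unproved. The claim itself is true, but establishing it is not a routine application of Lemma~\ref{lemma:67} to the curves $c_k=t_{a_1}^{-2k}(b_1)$; it needs the kind of auxiliary manipulation done in Proposition~\ref{prop:2} (introducing $v=t_at_c(b)$, the relations (\ref{bva})--(\ref{bvc}), and the element $\phi_3=t_at_ct_b^3$ mapping $(b,t_b^{-2}(v))$ to $(v,b)$).

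The paper sidesteps this entirely: it applies the already-proved equation (\ref{relation100}) with $n=1$ and pushes it forward under the capping map $\iota_\ast:\CM_1^2\to\CM_1^1$ obtained by gluing a disk along $d'$, which kills $t_{d'}$ and sends $(c,d)$ to $(a,c)$; this yields $C_2 \e t_a^8 t_b^4\,[V,W]\,t_c^{-1}$ in $\CM_1^1$, i.e.\ precisely the factorization of $(t_at_b)^6$ you need, with the commutator already built. If you want to salvage your version, either quote Proposition~\ref{prop:2} through this capping argument, or redo its internal steps (the braid-relation rearrangement producing $t_a^8 t_b^4 (t_b t_{t_a^2(b)})^{\,\cdot}$-type words and the element playing the role of $\phi_3$) inside the genus-one subsurface; as written, the commutator $[V'',W'']$ is asserted rather than constructed.
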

\begin{proof}
Let $C_3$ be the 3-chain relator in Definition~\ref{3-chain}. 
By the inclusion $\iota: \Sigma_1^2 \to \Sigma_1^1$ obtained by gluing a disk along $d'$, $\iota$ maps $(c,d)$ on $\Sigma_1^2$ to $(a,c)$ on $\Sigma_1^1$. 
Then, from the map $\iota_\ast:\CM_2^1 \to \CM_1^1$ induced by $\iota$, the trivial relation $t_{d'}=\mathrm{id}$ and the braid relation $t_at_bt_a=t_bt_at_b$ gives the 2-chain relator $C_2$ from $C_3$. 
From the equation (\ref{relation100}) in the case of $n=1$ and $\iota_\ast$, the equation \[C_2 \e t_a^8 t_b^4 [V,W] t_c^{-1}\] holds in $\CM_1^1$, where $V,W$ are some elements in $\CM_1^1$. 
Therefore, when we denote $S_1^1$ by the genus-1 subsurface bounded by $s_{1,1}$ as in Figure~\ref{L0L12}, Lemma~\ref{example} gives
\begin{align*}
C_{2,1}L^{-1} &\e t_{a_1}^8 t_{b_1}^4 [V_1,W_1] [t_\delta t_z^{-1},\phi] [t_\gamma,\psi], 
\end{align*}
where $V_1,W_1$ is in $\CM(S_1^1)$. 
Since $S_1^1$ is disjoint from $\delta, \zeta, \epsilon, y, z,d_2$, and $V_1,W_1$ are in $\CM(S_1^1)$, $V_1,W_1$ can commute with $t_\delta t_z^{-1},\phi$ modulo $P$ by the commutative relations. 
Lemma~\ref{lemma:10} and a cyclic permutation give the required formula. 
\end{proof}

We are now ready to prove Proposition~\ref{prop:10}. 
\begin{proof}[Proof of Proposition~\ref{prop:10}]
In the notation of Lemma~\ref{twelvetwo} and~\ref{2chainlanterncommutator}, since $S_i$ is disjoint from $S$ for any $i$, $YA, \Omega$ can commute with $V',W'$ modulo $P$ using the commutative relations. 
Here, for a simple closed curve $c$ on $S$, $\Omega\Phi(c)=c$ since $\Phi|_S = \mathrm{id}_S$ and $S_i$ is disjoint from $S$ for any $i$.
This gives $\Omega\Phi \cdot t_c \e t_c \cdot \Omega\Phi$, and therefore $\Omega\Phi$ can commute with $V',W'$ modulo $P$ since $V',W'$ are in $\CM(S)$. 
Hence, by Lemma~\ref{twelvetwo},~\ref{2chainlanterncommutator} and~\ref{lemma:10} and a cyclic permutation, we have
\begin{align*}
L_1L_2\cdots L_{12} \cdot C_{2,1}L^{-1} &\e  t_{\alpha_{12}}^{-12}[X,\Psi] [YA,\Omega\Phi] \cdot [V', W'] [t_\gamma,\psi] t_{a_1}^8 t_{b_1}^4  \\
&\e [t_\gamma,\psi] t_{a_1}^8 t_{b_1}^4 t_{\alpha_{12}}^{-12} [X,\Psi] [YAV',\Omega\Phi W']. 
\end{align*}
Note that $t_\gamma, \psi$ can commute with $t_{a_1},t_{b_1},t_{\alpha_{12}}$ modulo $P$ by the commutative relations since $\varepsilon, \gamma, d_1$ are disjoint from $a_1,b_1,\alpha_{12}$. 
Therefore, 
\[L_1L_2\cdots L_{12} \cdot C_{2,1}L^{-1} \e t_{a_1}^8 t_{b_1}^4 t_{\alpha_{12}}^{-12} [t_\gamma,\psi] [X,\Psi] [YAV',\Omega\Phi W']. \]
Since $S_i$ is disjoint from $S$, $t_\gamma$ and $\psi$ can commute with $X$ and $\Psi$ modulo $P$ from the commutative relations. 
Lemma~\ref{lemma:10} gives 
\begin{align}
L_1L_2\cdots L_{12} \cdot C_{2,1}L^{-1} &\e t_{a_1}^8 t_{b_1}^4 t_{\alpha_{12}}^{-12} [t_\gamma X, \psi \Psi] [YAV',\Omega\Phi W']. \label{equation123}
\end{align}
  \begin{figure}[hbt]
  \centering
       \includegraphics[scale=.90]{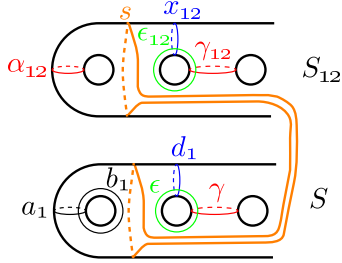}
       \caption{The separating curve $s$ on $\Sigma_g^1$.}
       \label{halftwist}
  \end{figure}

Let $S_2^1$ be the genus-$2$ subsurface of $\Sigma_g^1$ bounded by $s$ such that it contains simple closed curves $a_1,b_1,\alpha_{12}$ (see Figure~\ref{halftwist}) and disjoint from $d_1,\gamma,\epsilon$ and $x_i,\gamma_i,\epsilon_i$ for any $i=1,2,\ldots,12$. 
Then, we can consider the half twist $H_s$ along $s$ such that $H_s|_{\Sigma_g^1-S_2^1} = \mathrm{id}|_{\Sigma_g^1-S_2^1}$, $H_s(a_1)=\alpha_{12}$ and $H_s(\alpha_{12})=a_1$. 
Here we set 
\[H:=t_{a_1}t_{b_1}H_s.\]
Since $\alpha_{12}$ is disjoint from $a_1,b_1$, we see that $H|_{\Sigma_g^1-S_2^1} = \mathrm{id}|_{\Sigma_g^1-S_2^1}$, $H(\alpha_{12}) = b_1$ and $H(a_1) = \alpha_{12}$ from $t_{a_1}t_{b_1}(a_1)=b_1$ (see Definition~\ref{primitive}).
Therefore, Lemma~\ref{lemma:67} gives 
\begin{align*} 
t_{\alpha_{12}}^{-12} t_{a_1}^8 t_{b_1}^4 &\e t_{a_1}^8 t_{\alpha_{12}}^{-4} t_{b_1}^4 t_{\alpha_{12}}^{-8}  \\
&\e [t_{a_1}^8 t_{\alpha_{12}}^{-4}, H] .
\end{align*}
By this equation and the equation (\ref{equation123}), we obtain
\begin{align*}
L_1L_2\cdots L_{12} C_{2,1}L^{-1} &\e [t_{a_1}^8 t_{\alpha_{12}}^{-4}, H] [t_\gamma X, \psi\Psi] [YAV',\Omega\Phi W']. 
\end{align*}
Note that $a_1,b_1,\alpha_{12},s$ are disjoint from $d_1,\gamma,\epsilon$ and $\gamma_i,\epsilon_i,x_i$ for any $i=1,2,\ldots,12$. 
Hence, by $H|_{\Sigma_g^1-S_2^1} = \mathrm{id}|_{\Sigma_g^1-S_2^1}$, the definitions of $X,\Psi,\psi$ and the commutative relations, we see that $t_{a_1}^8 t_{\alpha_{12}}^{-4}$ and $H$ can commute with $t_\gamma X$ and $\psi\Psi$ modulo $P$. 
Lemma~\ref{lemma:10} gives
\begin{align*}
L_1L_2\cdots L_{12} C_{2,1}L^{-1} &\e [t_{a_1}^8 t_{\alpha_{12}}^{-4}t_\gamma X, \psi\Psi H] [YAV',\Omega\Phi W'], 
\end{align*}
and the proof is complete. 
\end{proof}

We show Theorem~\ref{thmA}. 
\begin{proof}[Proof of Theorem A]
Assume that $g\geq 39n$ and $n\geq 1$. 
Then, we can take $n$ disjoint subsurfaces $S_1^\prime,S_2^\prime,\ldots,S_n^\prime$ of $\Sigma_g^1$ of genus $39$ with one boundary component and find a diffeomorphism $\Phi^\prime$ on $\Sigma_g^1$ such that $\Phi^\prime(S_i^\prime)=S_{i+1}^\prime$. 
Let 
\begin{align*}
&R_1:= L_1L_2\cdots L_{12} C_{2,1}L^{-1}, \\ 
&R_{i+1}:={}_{\Phi^\prime}(R_i).
\end{align*}
Since $S_i^\prime$ is disjoint from $S_j^\prime$, by the commutative relations, Lemma~\ref{lemma:10} and Proposition~\ref{prop:10}, we have
\begin{align*}
R_1R_2\cdots R_n \e [\widetilde{\mathcal{A}},\widetilde{\mathcal{B}}][\widetilde{\mathcal{C}},\widetilde{\mathcal{D}}], 
\end{align*}
where $\widetilde{\mathcal{A}},\widetilde{\mathcal{B}},\widetilde{\mathcal{C}},\widetilde{\mathcal{D}}$ are some elements in $\CM_g^1$. 
In particular, we see that this relation also holds in $\CM_g$. 
This gives a  $\Sigma_g$-bundle $X \to \Sigma_2$ with a $0$-section for $g\geq 39n$. 
From the above argument, in the notation of Proposition~\ref{ehktprop0}, we have 
\begin{align*}
n(T) &= n^+(T) - n^-(T) = 0 - 0, \\
n(C_2) &= n^+(C_2) - n^-(C_2) = n - 0, \\
n(L) &= n^+(L) - n^-(L) = 12n - n. 
\end{align*}
This gives 
\[\sigma(X) = - 1 \cdot 0 - 7 \cdot n + 1 \cdot 11n = 4n\]
for $g\geq 39n$, and this finishes the proof. 
\end{proof}

\section{Surface bundles with fiber of odd genus}\label{odd}
This section shows Theorem~\ref{thmB} and~\ref{thmC}. 
To prove them, we prepare some results (Proposition~\ref{prop:11-1} and~\ref{prop:lanterncommutator} and Lemma~\ref{lemma:?}).

Let $\alpha_1,$ $\beta_1,$ $\gamma_1,$ $x_1,$ $y_1,$ $z_1,$ $x^\prime_1,$ $y^\prime_1,$ $z^\prime_1$ be the nonseparating curves on the genus-$2$ subsurface $S_2^2$ of $\Sigma_g$ bounded by $\delta_1,\delta_1^\prime$ as in Figure~\ref{Licurves}. 
We consider the following two lantern relators: 
\begin{align*}
L_1 &:= t_{\alpha_1}^{-1} t_{\delta_1}^{-1} t_{\gamma_1}^{-1} t_{\beta_1}^{-1} t_{x_1} t_{y_1} t_{z_1}, \\
L_1^\prime &:= t_{\beta_1}^{-1} t_{\gamma_1}^{-1} t_{\delta'_1}^{-1} t_{\alpha_1}^{-1} t_{x^\prime_1} t_{y^\prime_1} t_{z^\prime_1}. 
\end{align*}
\begin{prop}\label{prop:11-1}
For any integer $n$, there are elements $X_{1,1},Y_{1,1}, X_{2,1},Y_{2,1},\ldots,$ $X_{|n|+2,1}, Y_{|n|+2,1}$ in $\CM(S_2^2)$ such that the following holds in $\CM(S_2^2)$$:$
\begin{align*}
(L_1)^{2n}(L_1^\prime)^{2n} \e [X_{1,1},Y_{1,1}] [X_{2,1},Y_{2,1}] \cdots [X_{|n|+2,1}, Y_{|n|+2,1}] \cdot t_{\delta_1}^{-2n} t_{\delta_1^\prime}^{-2n}. 
\end{align*}
\end{prop}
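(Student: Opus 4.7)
The plan is to follow the template of the proof of Proposition~\ref{prop:2}, first using the commutative relations to park the boundary twists $t_{\delta_1}^{-2n} t_{\delta_1'}^{-2n}$ on the right, and then extracting commutators from what remains by exploiting a natural symmetry of $S_2^2$.

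First, I would record the following disjointness picture. Writing $P_1$ and $P_1'$ for the two pairs of pants in $S_2^2$ bounded respectively by $(\alpha_1,\beta_1,\gamma_1,\delta_1)$ and $(\alpha_1,\beta_1,\gamma_1,\delta_1')$, the curves $x_1,y_1,z_1$ lie in the interior of $P_1$ while $x_1',y_1',z_1'$ lie in the interior of $P_1'$, and $P_1\cap P_1' = \alpha_1\cup\beta_1\cup\gamma_1$. Hence $\delta_1, \delta_1'$ are disjoint from every other curve appearing in $L_1, L_1'$, the curves $\alpha_1,\beta_1,\gamma_1$ are pairwise disjoint and disjoint from all of $x_1,y_1,z_1,x_1',y_1',z_1'$, and the two triples $\{x_1,y_1,z_1\}$ and $\{x_1',y_1',z_1'\}$ sit in disjoint subsurfaces. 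Using the commutative relations supplied by these disjointnesses, I get
\[
(L_1)^{2n}(L_1')^{2n} \e M_1^{2n}\, {M_1'}^{2n} \cdot t_{\delta_1}^{-2n} t_{\delta_1'}^{-2n},
\]
where $M_1 := t_{\alpha_1}^{-1} t_{\beta_1}^{-1} t_{\gamma_1}^{-1} t_{x_1} t_{y_1} t_{z_1}$ and, after rewriting with commutativity of $t_{\alpha_1}, t_{\beta_1}, t_{\gamma_1}$, $M_1' := t_{\alpha_1}^{-1} t_{\beta_1}^{-1} t_{\gamma_1}^{-1} t_{x_1'} t_{y_1'} t_{z_1'}$.

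Second, I would introduce the involution $\Psi$ of $S_2^2$ that exchanges $P_1$ with $P_1'$, fixing $\alpha_1,\beta_1,\gamma_1$ pointwise and sending $(x_1,y_1,z_1,\delta_1)$ to $(x_1',y_1',z_1',\delta_1')$. Applying the primitive braid relation $\Psi\, t_c\, \Psi^{-1}\e t_{\Psi(c)}$ factor by factor yields $\Psi\, M_1\, \Psi^{-1}\e M_1'$, so
\[
M_1^{2n}\, {M_1'}^{2n} \e M_1^{2n}\cdot \Psi\, M_1^{2n}\, \Psi^{-1} \;=\; M_1^{4n}\cdot [M_1^{-2n},\,\Psi],
\]
which produces one commutator for free. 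It remains to rewrite $M_1^{4n}$ as $|n|+1$ commutators (with residual boundary-twist powers absorbed into $t_{\delta_1}^{-2n}t_{\delta_1'}^{-2n}$). Using commutativity of $\alpha_1,\beta_1,\gamma_1$ with $x_1,y_1,z_1$, one has $M_1^{4n}\e (t_{x_1}t_{y_1}t_{z_1})^{4n} t_{\alpha_1}^{-4n} t_{\beta_1}^{-4n} t_{\gamma_1}^{-4n}$. Taking a diffeomorphism $\rho$ of $S_2^2$ that cyclically permutes $(\alpha_1,\beta_1,\gamma_1)$ (and correspondingly the interior curves of each pair of pants), I would then iterate the argument of Proposition~\ref{prop:2}: Lemma~\ref{lemma:8} creates a pyramidal stack of Dehn-twist powers, Lemma~\ref{lemma:67} peels off one commutator per layer of the stack, and the merging lemmas~\ref{lemma:10}--\ref{lemma:21} combine adjacent commutators. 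The target count $|n|+1$ is forced by the same cyclic/pyramidal bookkeeping that yields $|n|+1$ commutators for $C_3^n$ in Proposition~\ref{prop:2}.

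The hard part will be the third step, specifically obtaining exactly $|n|+1$ commutators while keeping all residual Dehn-twist powers cleanly absorbable into $t_{\delta_1}^{-2n} t_{\delta_1'}^{-2n}$. This requires a careful parallel to the bookkeeping of Proposition~\ref{prop:2} applied to the lantern configuration (where the interior curves $x_1,y_1,z_1$ pairwise intersect, so braid-type shortcuts are unavailable and one must rely on conjugation by $\rho$), together with a verification that both $\Psi$ and $\rho$ genuinely extend to diffeomorphisms of $S_2^2$ realizing the claimed permutations of curves.
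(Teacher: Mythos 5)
Your first step (pulling the boundary twists $t_{\delta_1}^{-2n}t_{\delta_1'}^{-2n}$ out to the right and working with $M_1, M_1'$) is fine and consistent with how the paper begins, but the second step already breaks the statement you are trying to prove: the proposition requires all entries $X_{i,1},Y_{i,1}$ to lie in $\CM(S_2^2)$, i.e.\ to be represented by diffeomorphisms that are the identity on $\Sigma_g-S_2^2$, and no such element can play the role of your $\Psi$. Indeed, if $\Psi\in\CM(S_2^2)$ fixed $\alpha_1,\beta_1,\gamma_1$ and sent $(x_1,y_1,z_1)$ to $(x_1',y_1',z_1')$, then conjugating the first lantern relation would give $t_{x_1'}t_{y_1'}t_{z_1'}=t_{\alpha_1}t_{\Psi(\delta_1)}t_{\gamma_1}t_{\beta_1}$, which together with the second lantern relation forces $\Psi(\delta_1)=\delta_1'$; but an element supported in $S_2^2$ fixes both boundary curves, and $\delta_1\not\simeq\delta_1'$ in $\Sigma_g$ once $g\geq5$. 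This support condition is not cosmetic: the whole point of Proposition~\ref{prop:lanterncommutator} is to place the conjugates ${}_{r_k^{j-1}}(X_{i,1}),{}_{r_k^{j-1}}(Y_{i,1})$ in the $k$ disjoint copies of $S_2^2$ and let them commute, which fails if your commutators involve a map permuting the boundary.

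Even granting $\Psi$, the third step cannot be carried out with the budget you need. In the group one has $M_1=L_1t_{\delta_1}=t_{\delta_1}$, so you are asking for $t_{\delta_1}^{4n}$ to be a product of $|n|+1$ commutators of elements supported in $S_2^2$; since for $g\geq5$ the inclusion-induced map $\CM_2^2\to\CM(S_2^2)$ is injective, this would give $\cl_{\CM_2^2}(t_{\delta_1}^{4|n|})\leq|n|+1$, contradicting the result of \cite{bkm} quoted after Proposition~\ref{separatinggenush}, namely $\cl_{\CM_2^2}(t_{\partial}^{m})=[(m+3)/2]$ (so $2|n|+1$ here, and $\scl=1/2$). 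No amount of $\e$-bookkeeping can evade this, because every $\e$-identity is in particular an identity in the group. This is exactly the trap the paper's proof is engineered to avoid: it never isolates a power of a single boundary twist. Instead it interleaves the two lanterns so that the $n$-growing part is $([X,Y]t_{z_1}^2t_{y_1'}^2)^n$, processed by Lemma~\ref{lemma:31} into $|n|$ commutators plus $t_{z_1}^{2n}t_{y_1'}^{2n}$, and the leftover powers sit on nonseparating interior curves which are then paired against $t_{\alpha_1}^{-2n},t_{\beta_1}^{-2n},t_{\gamma_1}^{-2n}$ via Lemma~\ref{lemma:67} and the maps $f_2,f_3$ supplied by Lemma~\ref{lemma:10002}, producing exactly two further commutators and the clean residue $t_{\delta_1}^{-2n}t_{\delta_1'}^{-2n}$. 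Your reduction to $M_1^{4n}$ concentrates the entire defect on one boundary curve, where $\scl=1/2$ makes the target count unattainable.
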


  \begin{figure}[hbt]
  \centering
       \includegraphics[scale=.65]{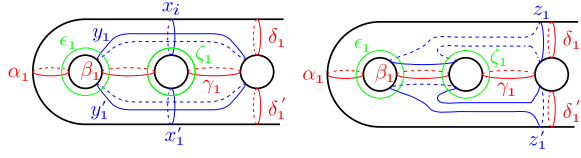}
       \caption{The curves $\alpha_1,\beta_1,\gamma_1,\delta_1,$ $\epsilon_1,\zeta_1,x_1,y_1,z_1,x^\prime_1,y^\prime_1,z^\prime_1$ on $S_2^2$.}
       \label{Licurves}
  \end{figure}

\begin{proof}
Note that $\alpha_1,\beta_1,\gamma_1,\delta_1$ (resp. $\alpha_1,\beta_1,\gamma_1,\delta_1^\prime$) are disjoint from each other and $z_1$ (resp. $x_1^\prime$). 
Therefore, by the lantern relations $t_{x_1} t_{y_1} t_{z_1} =t_{\beta_1} t_{\gamma_1} t_{\delta_1} t_{\alpha_1}$ and $t_{x_1^\prime} t_{y_1^\prime} t_{z_1^\prime} = t_{\alpha_1} t_{\gamma_1} t_{\beta_1} t_{\delta_1^\prime}$ and the commutative relations, we have 
\begin{align*}
t_{x_1} t_{y_1} t_{\alpha_1}^{-1} (z_1) &= t_{\beta_1} t_{\gamma_1} t_{\delta_1} t_{z_1}^{-1} (z_1) = z_1, \\
t_{y_1^\prime} t_{z_1^\prime} t_{\gamma_1}^{-1} (x_1^\prime) &= t_{x_1^\prime}^{-1} t_{\alpha_1} t_{\beta_1} t_{\delta_1^\prime} (x_1^\prime) = x_1^\prime. 
\end{align*}
Using these facts and the primitive braid relations, we obtain 
\begin{align*}
t_{x_1} t_{y_1} t_{\alpha_1}^{-1} \cdot t_{z_1} &\e t_{z_1} \cdot t_{x_1} t_{y_1} t_{\alpha_1}^{-1}, \\
t_{x_1^\prime} \cdot t_{y_1^\prime} t_{z_1^\prime} t_{\gamma_1}^{-1} &\e t_{y_1^\prime} t_{z_1^\prime} t_{\gamma_1}^{-1} \cdot t_{x_1^\prime}. 
\end{align*}
These two relations and the commutative relations give 
\begin{align*}
&(L_1)^{2n} \e (t_{y_1} t_{z_1} t_{\alpha_1}^{-1})^{2n} t_{x_1}^{2n} t_{\beta_1}^{-2n} t_{\gamma_1}^{-2n} t_{\delta_1}^{-2n} , \\
&(L_1^\prime)^{2n} \e (t_{x^\prime_1} t_{y^\prime_1} t_{\gamma_1}^{-1})^{2n} t_{z^\prime_1}^{2n} t_{\alpha_1}^{-2n} t_{\beta_1}^{-2n} t_{\delta_1^\prime}^{-2n} . 
\end{align*}
Since $x_1,y_1,z_1,x^\prime_1,y^\prime_1,z^\prime_1$ are disjoint from $\alpha_1,\beta_1,\gamma_1,\delta_1$, and $x_1,y_1,z_1$ are disjoint from $x^\prime_1,y^\prime_1,z^\prime_1$, by the commutative relations, we have 
\begin{align*}
(L_1)^{2n} (L_1^\prime)^{2n} &\e (t_{y_1} t_{z_1} t_{\gamma_1}^{-1} t_{x^\prime_1} t_{y^\prime_1} t_{\alpha_1}^{-1})^{2n} \cdot t_{x_1}^{2n} t_{z^\prime_1}^{2n} \cdot t_{\alpha_1}^{-2n} t_{\beta_1}^{-4n} t_{\gamma_1}^{-2n} \cdot t_{\delta_i}^{-2n} t_{\delta_i^\prime}^{-2n}. 
\end{align*}
Since $y_1,z_1$ are disjoint from $x_1^\prime,y_1^\prime$, and $\alpha_1,\gamma_1$ are disjoint from $y_1,z_1,x_1^\prime,y_1^\prime$, by the commutative and the primitive braid relations, we obtain 
\begin{align*}
(t_{y_1} t_{z_1} t_{\gamma_1}^{-1} t_{x^\prime_1} t_{y^\prime_1} t_{\alpha_1}^{-1})^2 &\e t_{y_1} t_{\alpha_1}^{-1} t_{x^\prime_1} t_{\gamma_1}^{-1} \cdot (t_{z_1}t_{y_1}t_{z_1}^{-1}) t_{\gamma_1}^{-1} (t_{y^\prime_1}t_{x^\prime_1}t_{y^\prime_1}^{-1}) t_{\alpha_1}^{-1} \cdot t_{z_1}^2 t_{y^\prime_1}^2 \\
&\e t_{y_1} t_{\alpha_1}^{-1} t_{x^\prime_1} t_{\gamma_1}^{-1} \cdot  t_{t_{y^\prime_1}(x^\prime_1)} t_{\gamma_1}^{-1} t_{t_{z_1}(y_1)} t_{\alpha_1}^{-1} \cdot t_{z_1}^2 t_{y^\prime_1}^2. 
\end{align*}
Here, let $f_1:=t_{z_1}t_{y^\prime_1} \cdot t_{\epsilon_1}t_{y_1}t_{\alpha_1}t_{\epsilon_1} \cdot t_{\zeta_1}t_{x_1^\prime}t_{\gamma_1}t_{\zeta_1}$ in $\CM(S_2^2)$. 
By the latter part of Lemma~\ref{lemma:10002}, $t_{\epsilon_1}t_{y_1}t_{\alpha_1}t_{\epsilon_1} \cdot t_{\zeta_1}t_{x_1^\prime}t_{\gamma_1}t_{\zeta_1}$ maps $(y_1,\alpha_1,x^\prime_1,\gamma_1)$ to $(\alpha_1,y_1,\gamma_1,x^\prime_1)$. 
From that $\alpha_1,\gamma_1,y_1$ are disjoint from $y'_1$, $t_{y_1'}$ maps $(\alpha_1,y_1,\gamma_1,x^\prime_1)$ to $(\alpha_1,y_1,\gamma_1,t_{y_1'}(x^\prime_1))$. 
Note that $y_1'$ and $x_1'$ are disjoint from $z_1$, so $t_{y^\prime_1}(x^\prime_1)$ is disjoint from $z_1$. 
From this, $t_{z_1}$ maps $(\alpha_1,y_1,\gamma_1,t_{y_1'}(x^\prime_1))$ to $(\alpha_1,t_{z_1}(y_1),\gamma_1,t_{y^\prime_1}(x^\prime_1))$ since $\alpha_1,\gamma_1,t_{y^\prime_1}(x^\prime_1)$ are disjoint from $z_1$. 
Therefore, we see that $f_1$ maps $(y_1,\alpha_1,x^\prime_1,\gamma_1)$ to $(\alpha_1,t_{z_1}(y_1),\gamma_1,t_{y^\prime_1}(x^\prime_1))$. 
From Lemma~\ref{lemma:67}, we obtain
\begin{align*}
t_{y_1} t_{\alpha_1}^{-1} t_{x^\prime_1} t_{\gamma_1}^{-1} \cdot  t_{t_{y^\prime_1}(x^\prime_1)} t_{\gamma_1}^{-1} t_{t_{z_1}(y_1)} t_{\alpha_1}^{-1} \e [t_{y_1} t_{\alpha_1}^{-1} t_{x^\prime_1} t_{\gamma_1}^{-1}, f_1]. 
\end{align*}
When we write $[X,Y]=[t_{y_1} t_{\alpha_1}^{-1} t_{x^\prime_1} t_{\gamma_1}^{-1}, f_1]$, we have  
\begin{align*}
(t_{y_1} t_{z_1} t_{\gamma_1}^{-1} t_{x^\prime_1} t_{y^\prime_1} t_{\alpha_1}^{-1})^2 \e [X,Y] t_{z_1}^2 t_{y^\prime_1}^2. 
\end{align*}
Since $z_1$ is disjoint from $y_1'$, the commutative relations and Lemma~\ref{lemma:31} (1) give 
\begin{align*}
([X,Y] \cdot t_{z_1}^2 t_{y^\prime_1}^2)^n &= \prod_{i=1}^n [X_i,Y_i] \cdot (t_{z_1}^2 t_{y^\prime_1}^2)^{n} \e \prod_{i=1}^n [X_i,Y_i] \cdot t_{z_1}^{2n} t_{y^\prime_1}^{2n},
\end{align*} 
where $[X_i,Y_i]={}_{(t_{z_1}^2 t_{y^\prime_1}^2)^{i-1}}([X,Y])$, which is a commutator since the conjugation of a commutator is also a commutator. 
Note that $\alpha_1,\beta_1,\gamma_1$ (resp. $z_1$) are disjoint from $x_1,z_1,y_1^\prime,z_1^\prime$ (resp. $y_1^\prime$). 
From the above arguments and the commutative relations give 
\begin{align*}
(L_1)^{2n} (L_1^\prime)^{2n} &\e \prod_{i=1}^n[X_i,Y_i] \cdot t_{z_1}^{2n} t_{y^\prime_1}^{2n} \cdot t_{x_1}^{2n} t_{z^\prime_1}^{2n} \cdot t_{\alpha_1}^{-2n} t_{\beta_1}^{-4n} t_{\gamma_1}^{-2n} \cdot t_{\delta_1}^{-2n} t_{\delta_1^\prime}^{-2n} \\
&\e \prod_{i=1}^n[X_i,Y_i] \cdot t_{z_1}^{2n} t_{\alpha_1}^{-2n} t_{y^\prime_1}^{2n} t_{\beta_1}^{-2n} \cdot t_{x_1}^{2n} t_{\beta_1}^{-2n} t_{z^\prime_1}^{2n} t_{\gamma_1}^{-2n} \cdot t_{\delta_1}^{-2n} t_{\delta_1^\prime}^{-2n}. 
\end{align*}
We set $f_2 = t_{\zeta_1} t_{\beta_1} t_{\epsilon_1} t_{y_1^\prime} t_{\alpha_1} t_{\epsilon_1} t_{z_1} t_{\zeta_1}$ and $f_3 = t_{\epsilon_1} t_{z^\prime_1} t_{\zeta_1} t_{\gamma_1} t_{x_1} t_{\zeta_1} t_{\beta_1} t_{\epsilon_1}$ in $\CM(S_2^2)$. 
By Lemma~\ref{lemma:10002}, $f_2$ and $f_3$ in $\CM(S_2^2)$ map $(z_1,\alpha_1)$ and $(x_1,\beta_1)$ to $(\beta_1,y^\prime_1)$ and $(\gamma_1,z^\prime_1)$, respectively. 
Therefore, by Lemma~\ref{lemma:67}, we have 
\begin{align*}
t_{z_1}^{2n} t_{\alpha_1}^{-2n} t_{y^\prime_1}^{2n} t_{\beta_1}^{-2n} &= [t_{z_1}^{2n} t_{\alpha_1}^{-2n}, f_2],\\ 
t_{x_1}^{2n} t_{\beta_1}^{-2n} t_{z^\prime_1}^{2n} t_{\gamma_1}^{-2n} &= [t_{x_1}^{2n} t_{\beta_1}^{-2n}, f_3], 
\end{align*}
and the proposition follows. 
\end{proof}

\begin{prop}\label{prop:lanterncommutator}
Suppose that $g$ is odd. Let $s_0$ be a nonseparating curve on $\Sigma_g$, and let $\mathcal{L}_i$ be a lantern relator in $\CM_g$. 
For any integer $n$, there are some elements $\mathcal{X}_1,\mathcal{Y}_1,\mathcal{X}_2,\mathcal{Y}_2,\ldots,\mathcal{X}_{|n|+2},\mathcal{Y}_{|n|+2}$ in $\CM_g$ such that 
\begin{align*}
\prod_{i=1}^{2(g-1)n} \mathcal{L}_i \e \prod_{j=1}^{|n|+2} [\mathcal{X}_j, \mathcal{Y}_j] \cdot t_{s_0}^{-2(g-1)n}. 
\end{align*}
\end{prop}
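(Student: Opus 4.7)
The proof will mirror the structure of Theorem~\ref{thm:1000}, with the pair of lantern relators from Proposition~\ref{prop:11-1} playing the role that the $3$-chain relator played there. Since $g$ is odd, set $k:=(g-1)/2$. I will embed $k$ pairwise disjoint copies $S_{2,i}^{2}$ ($i=1,\ldots,k$) of the genus-two subsurface $S_{2}^{2}$ of Proposition~\ref{prop:11-1} inside $\Sigma_g$, each bounded by nonseparating curves $\delta_i,\delta_i'$, and choose a diffeomorphism $R$ of $\Sigma_g$ of order $k$ cyclically permuting these subsurfaces. Setting $L_i:={}_{R^{i-1}}(L_1)$ and $L_i':={}_{R^{i-1}}(L_1')$ produces $k$ pairs of lantern relators, accounting for exactly $4kn=2(g-1)n$ lanterns in total.

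Applying Proposition~\ref{prop:11-1} (transported by conjugation with $R^{i-1}$) in each subsurface yields
\[(L_i)^{2n}(L_i')^{2n} \e \prod_{j=1}^{|n|+2}[X_{j,i},Y_{j,i}]\cdot t_{\delta_i}^{-2n}\,t_{\delta_i'}^{-2n}\]
inside $\CM(S_{2,i}^2)$. Because distinct $S_{2,i}^{2}$'s are disjoint, any two elements coming from different $\CM(S_{2,i}^{2})$'s commute modulo primitive braid relations, so repeated application of Lemma~\ref{lemma:10} slot-by-slot in the index $j$ collapses the $k(|n|+2)$ commutators into $|n|+2$ combined commutators:
\[\prod_{i=1}^{k}(L_i)^{2n}(L_i')^{2n} \e \prod_{j=1}^{|n|+2}[\mathcal{X}_j,\mathcal{Y}_j]\cdot \prod_{i=1}^{k}t_{\delta_i}^{-2n}\,t_{\delta_i'}^{-2n},\]
with $\mathcal{X}_j := \prod_{i=1}^{k}X_{j,i}$ and $\mathcal{Y}_j := \prod_{i=1}^{k}Y_{j,i}$.

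The remaining factor $\prod_{i=1}^{k}t_{\delta_i}^{-2n}t_{\delta_i'}^{-2n}$ involves $g-1$ pairwise disjoint nonseparating curves whose total exponent equals $-2(g-1)n$. Arranging these boundary curves along a cyclic chain on which a diffeomorphism $f$ acts by shifting (with $s_0$ placed at the end of the chain), Lemma~\ref{lemma:8} rewrites the product as $t_{s_0}^{-2(g-1)n}$ times a single additional commutator $[f,D]$ for an explicit product $D$ of boundary Dehn twists.

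The main obstacle is that this bookkeeping produces $|n|+3$ commutators, whereas the statement demands $|n|+2$. To save one commutator, I plan to set up the subsurfaces, the rotation $R$, and the chain map $f$ so that $f$ can be taken to be a power (or a natural conjugate) of $R$, and so that by an appropriate choice in the slot-by-slot combination the combined factor $\mathcal{Y}_{|n|+2}$ can be arranged to equal $R$. Then the two commutators $[\mathcal{X}_{|n|+2},R][R,D]$ merge into a single commutator via the identity of Lemma~\ref{XYYZ}, exactly as $[B,r][r,D]\e[BD^{-1},DrD^{-1}]$ is used in the closing step of the proof of Theorem~\ref{thm:1000}. Verifying that this reshaping is compatible with all the disjointness and commutativity hypotheses used earlier in the slot-by-slot combination is the technical heart of the argument.
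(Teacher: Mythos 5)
Your global strategy coincides with the paper's: for $g=2k+1$ cut $\Sigma_g$ into $k$ genus-two pieces carrying the relators of Proposition~\ref{prop:11-1}, combine the resulting commutators slot-by-slot with Lemma~\ref{lemma:10}, and convert the leftover boundary twists into $t_{s_0}^{-2(g-1)n}$ plus one extra commutator via Lemma~\ref{lemma:8}. One remark on the set-up: since the complement of $k$ disjoint copies of $\Sigma_2^2$ in $\Sigma_{2k+1}$ has Euler characteristic $0$, the $2k$ boundary curves necessarily coincide in pairs, $\delta_i'\simeq\delta_{i+1}$ (this is exactly the paper's arrangement, with the rotation $r_k$ by $2\pi/k$); so there is no cyclic chain of $g-1$ distinct curves shifted by a single diffeomorphism, and Lemma~\ref{lemma:8} is instead applied to the $k$ curves $\delta_j$, each carrying exponent $-4n$, with shift $r_k$, giving $t_{\delta_k}^{-4kn}=t_{s_0}^{-2(g-1)n}$. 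This does not affect the exponent count.

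The genuine gap is the step you flag yourself: reducing $|n|+3$ commutators to $|n|+2$. Your mechanism is to arrange $\mathcal{Y}_{|n|+2}=R$ and merge $[\mathcal{X}_{|n|+2},R][R,D]$ by Lemma~\ref{XYYZ}. This is impossible: $\mathcal{Y}_{|n|+2}=Y_{|n|+2,1}Y_{|n|+2,2}\cdots Y_{|n|+2,k}$ is a product of elements supported in the disjoint genus-two subsurfaces, hence fixes every $\delta_j$ up to isotopy, while $R=r_k$ (or any nontrivial power of it) sends $\delta_j$ to $\delta_{j+1}$; for $k\geq 2$ these are different mapping classes, so the shared-entry identity of Lemma~\ref{XYYZ} is unavailable. (In Theorem~\ref{thm:1000} the two commutators $[B,r]$ and $[r,D]$ share the entry $r$ only because both arise from Lemma~\ref{lemma:8} applied with the same shift $r$; here the last commutator comes from Proposition~\ref{prop:11-1}, not from Lemma~\ref{lemma:8}.) The correct mechanism, which the paper uses, needs no shared entry: one checks that $\mathcal{X}_{|n|+2}$ and $\mathcal{Y}_{|n|+2}$ commute modulo $P$ with $r_k$ (Lemma~\ref{lemma:21}, because conjugation by $r_k$ cyclically permutes their factors $X_{|n|+2,j}$, resp.\ $Y_{|n|+2,j}$) and with the twists $t_{\delta_j}$ (the $\delta_j$ are boundary curves of the subsurfaces), and then merges
\[
[\mathcal{X}_{|n|+2},\mathcal{Y}_{|n|+2}]\,[t_{\delta_1}^{-4n}t_{\delta_2}^{-8n}\cdots t_{\delta_{k-1}}^{-4(k-1)n},\,r_k]\e[\mathcal{X}_{|n|+2}t_{\delta_1}^{-4n}t_{\delta_2}^{-8n}\cdots t_{\delta_{k-1}}^{-4(k-1)n},\ \mathcal{Y}_{|n|+2}r_k]
\]
by Lemma~\ref{lemma:10}. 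With this replacement (and the case $g=3$ handled directly, where $\delta_1=\delta_1'$ and Proposition~\ref{prop:11-1} alone already gives the statement), your argument goes through.
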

\begin{proof}[Proof of Proposition~\ref{prop:lanterncommutator}]
If $g=3$, Proposition~\ref{prop:lanterncommutator} immediately follows from $\delta_1=\delta_1^\prime$ and Proposition~\ref{prop:11-1}. 
\begin{figure}[hbt]
  \centering
       \includegraphics[scale=.75]{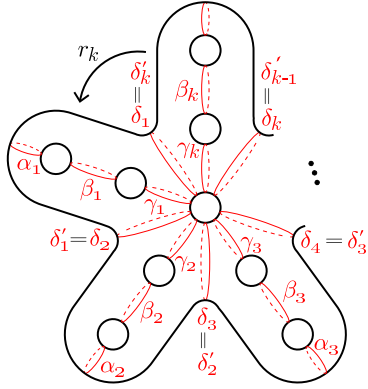}
       \caption{The rotation $r_k$ of $\Sigma_g$ for $g=2k+1$.}
       \label{rk}
  \end{figure}

If $g=2k+1$ and $k\geq 2$, then there is a rotation $r_k$ of $\Sigma_g$ by $2\pi/k$ as in Figure~\ref{rk}. 
In the notation of Proposition~\ref{prop:11-1}, we write 
\begin{align*}
&L_j := {}_{r_k^{j-1}}(L_1),& &L_j^\prime := {}_{r_k^{j-1}}(L_1^\prime),& \\ 
&\delta_j := r_k^{j-1}(\delta_1),& &\delta_j^\prime := r_k^{j-1}(\delta_1^\prime), \\
&X_{i,j} := {}_{r_k^{j-1}}(X_i),& &Y_{i,j} := {}_{r_k^{j-1}}(Y_i),& 
\end{align*}
for $j=1,2,\ldots,k$. 
Note that $\delta_{j+1} = \delta_j^\prime$, $\delta_{k+1}=\delta_1$, $X_{i,k+1} = X_{i,1}$ and $Y_{i,k+1} = Y_{i,1}$. 
For $j=1,2,\ldots,k$, Proposition~\ref{prop:11-1} and the primitive braid relations give 
\[ (L_j)^{2n} (L_j^\prime)^{2n} \e [X_{1,j},Y_{1,j}] [X_{2,j},Y_{2,j}] \cdots [X_{|n|+2,j}, Y_{|n|+2,j}] \cdot t_{\delta_j}^{-2n} t_{\delta_j^\prime}^{-2n}. \]

Recall that $S_2^2$ is the genus-$2$ subsurface of $\Sigma_g$ bounded by $\delta_1,\delta_1^\prime$. 
Any simple closed curves on $\mathrm{Int}(r_k^{j-1}(S_2^2))$ are disjoint from any simple closed curves on $\mathrm{Int}(r_k^{j^\prime-1}(S_2^2))$ if $j\neq j^\prime$, and $\delta_j,\delta_j^\prime$ are boundary curves of $r_k^{j-1}(S_2^2)$. 
Hence, for any elements $e_j$ in $\CM(r_k^{j-1}(S_2^2))$ and any element $f_{j^\prime}$ in $\CM(r_k^{j^\prime-1}(S_2^2))$, we have $e_j f_{j^\prime} \e f_{j^\prime} e_j$ for $j\neq j^\prime$ by the commutative relations and the property of boundary curves. 
By $\delta_{j+1} = \delta_j^\prime$, $\delta_{k+1}=\delta_1$, the commutative relations and Lemma~\ref{lemma:10} we have 
\[\prod_{j=1}^k (L_j)^{2n}(L_j^\prime)^{2n} \e [\mathcal{X}_1,\mathcal{Y}_1] [\mathcal{X}_2,\mathcal{Y}_2] \cdots [\mathcal{X}_{|n|+2},\mathcal{Y}_{|n|+2}] \ t_{\delta_1}^{-4n} t_{\delta_2}^{-4n} \cdots t_{\delta_k}^{-4n}, \] 
where $\mathcal{X}_i = X_{i,1}X_{i,2} \cdots X_{i,k}$ and $\mathcal{Y}_i = Y_{i,1}Y_{i,2} \cdots Y_{i,k}$, and Lemma~\ref{lemma:8} (1) gives 
\begin{align*}
t_{\delta_1}^{-4n} t_{\delta_2}^{-4n} \cdots t_{\delta_k}^{-4n} \e [t_{\delta_1}^{-4n} t_{\delta_2}^{-8n} \cdots t_{\delta_{k-1}}^{-4(k-1)n},r_k] \cdot t_{\delta_k}^{-4kn}. 
\end{align*}
Since from their definition, $\mathcal{X}_{|n|+2}, X_{|n|+2,j}$ (resp. $\mathcal{Y}_{|n|+2}, Y_{|n|+2,j}$) and $r_k$ satisfy the condition of Lemma~\ref{lemma:21}, by the primitive braid relations, we obtain $\mathcal{X}_{|n|+2} r_k \e r_k \mathcal{X}_{|n|+2}$ (resp. $\mathcal{Y}_{|n|+2} r_k \e r_k \mathcal{Y}_{|n|+2}$). 
Moreover, since $\delta_j$ is a boundary curve of $r_k^{j-1}(S_2^2)$ and disjoint from $r_k^{j'-1}(S_2^2)$ if $j\neq j'$, $\mathcal{X}_{|n|+2}$ and $\mathcal{Y}_{|n|+2}$ can commute with $t_{\delta_j}$ modulo $P$ for any $j$ by the commutative relations and the property of boundary curves. 
From the above argument, Lemma~\ref{lemma:10} gives
\begin{align*}
[\mathcal{X}_{|n|+2},\mathcal{Y}_{|n|+2}] [t_{\delta_1}^{-4n} t_{\delta_2}^{-8n} \cdots t_{\delta_{k-1}}^{-4(k-1)n}, r_k] = [\mathcal{X}_{|n|+2} t_{\delta_1}^{-4n} t_{\delta_2}^{-8n} \cdots t_{\delta_{k-1}}^{-4(k-1)n}, \mathcal{Y}_{|n|+2} r_k], 
\end{align*}
and we obtain the desired conclusion. 
\end{proof}

\begin{lem}[\cite{ko2}]\label{lemma:?}
Let us consider the lantern relator $L:=t_a^{-2}t_d^{-1}t_{d'}^{-1} t_ct_{s_1}t_z$, the 2-chain relator $C_2:=t_{s_1}^{-1} (t_at_b)^6$ and the 3-chain relator $C_3:=t_{d}^{-1}t_{d'}^{-1}(t_at_bt_c)^4$, where the curves are as in Figure~\ref{abcdd'}. 
\[C_3 \e L \cdot C_2.\] 
\end{lem}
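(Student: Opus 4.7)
The plan is to verify $C_3 \e L \cdot C_2$ by an explicit sequence of primitive braid moves, following \cite{ko2}.

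First I would reduce. Since $d$ and $d'$ are the boundary components of $\Sigma_1^2$, Remark~\ref{fundamental}(2) gives that $t_d, t_{d'}$ commute modulo $P$ with every other Dehn twist; pulling $t_d^{-1} t_{d'}^{-1}$ to the front of $L \cdot C_2$ and comparing with $C_3 = t_d^{-1} t_{d'}^{-1} (t_a t_b t_c)^4$, the claim reduces to the word identity
\[(t_a t_b t_c)^4 \e t_a^{-2}\, t_c t_{s_1} t_z t_{s_1}^{-1}\, (t_a t_b)^6\]
in $\CM_1^2$.

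Next I would list the primitive braid moves available in Figure~\ref{abcdd'}: the commutation $t_a t_c \e t_c t_a$ (from $i(a,c) = 0$); the commutation of $t_{s_1}$ with $t_a$ and with $t_b$ (since $s_1 = \partial N(a \cup b)$ is disjoint from $a$ and $b$); the braid relations $t_a t_b t_a \e t_b t_a t_b$ and $t_b t_c t_b \e t_c t_b t_c$ (from $i(a,b) = i(b,c) = 1$); and the primitive braid substitution $t_{s_1} t_z t_{s_1}^{-1} \e t_{t_{s_1}(z)}$. With these in hand, the core computation is to expand $(t_a t_b t_c)^4$, slide each $t_c$ past the $t_a$-factors using commutativity, and then apply the two braid relations iteratively to peel off a copy of $(t_a t_b)^6$ on the right. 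What remains on the left should be a specific conjugate of $t_c$ by a word in $t_a, t_b$; the topological identification of $z$ in the figure (as the image of $c$ under an appropriate word in $t_{s_1}, t_a, t_b$ arising from the lantern configuration) then rewrites this remainder as $t_a^{-2}\, t_c t_{s_1} t_z t_{s_1}^{-1}$ via a final primitive braid move.

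The main obstacle will be the bookkeeping. The three interior lantern curves $c, s_1, z$ pairwise intersect, so the corresponding Dehn twists do not commute modulo $P$, and each step of the rearrangement must precisely track which conjugate of which curve is being twisted. The explicit derivation is given in \cite{ko2}; the present lemma records only its upshot, namely that passing from $C_3$ to $L \cdot C_2$ requires no relator beyond $P$.
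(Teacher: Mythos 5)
Your opening reduction is correct: since $t_d,t_{d'}$ are boundary twists they commute modulo $P$ with everything, and the lemma is equivalent to $(t_at_bt_c)^4 \e t_a^{-2}\,t_ct_{s_1}t_zt_{s_1}^{-1}\,(t_at_b)^6$ in $\CM_1^2$. From there, however, what you give is a plan rather than a proof, and its one concrete structural prediction cannot be right. If one could peel $(t_at_b)^6$ off the right so that ``what remains on the left'' is a conjugate of $t_c$ by a word in $t_a,t_b$, then, since $\e$ implies genuine equality in $\CM_1^2$, that remainder would equal $(t_at_bt_c)^4(t_at_b)^{-6}=t_dt_{d'}t_{s_1}^{-1}$ by the chain relations; but $t_dt_{d'}t_{s_1}^{-1}$ acts trivially on $H_1(\Sigma_1^2)$, whereas a Dehn twist along a nonseparating curve (any conjugate of $t_c$) does not. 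So no ``final primitive braid move'' at the end of the word can close your argument: the substitution involving $z$ must be performed in the interior of the word, not as a terminal correction.

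That interior substitution is precisely the content you defer to \cite{ko2}, and it is the crux of the paper's proof: one checks by drawing the curves of Figure~\ref{abcdd'} that $t_bt_at_at_b(z)=c$, so the primitive braid relation converts the single occurrence of $t_z$ (suitably positioned after commutations and a cyclic permutation, and after using the braid relation to rewrite each $(t_at_b)^3$ as $t_at_at_bt_at_at_b$) into a $t_c$; the word is then reassembled into $(t_at_bt_c)^4$ by the moves $t_bt_at_at_b\cdot t_a\e t_a\cdot t_bt_at_at_b$, $t_at_bt_c\cdot t_a\e t_b\cdot t_at_bt_c$ and $t_at_bt_c\cdot t_b\e t_c\cdot t_at_bt_c$ coming from Lemma~\ref{lemma:10001}, together with the braid relation. (The paper runs this from $L\cdot C_2$ toward $C_3$, but the direction is immaterial.) Without identifying the specific curve identity $t_bt_at_at_b(z)=c$ --- or an equivalent one naming exactly which word carries $z$ to $c$ --- and without actually executing the rearrangement, the statement that only $P$-moves are needed is asserted, not proved; citing \cite{ko2} for ``the bookkeeping'' leaves the entire proof missing.
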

\begin{proof}
Since $a,d,d'$ are disjoint from $c,z$ and each other, $t_a,t_d,t_{d'}$ can commute with $t_c,t_z$ modulo $P$ by the commutative relations. 
Combining this with a cyclic permutation give $L \e t_zt_c t_d^{-1}t_{d'}^{-1} t_{a}^{-2} t_{s_1}$. 
Here, by the braid relation, we have $t_at_bt_at_bt_at_b \e t_at_at_bt_at_at_b$. 
Therefore, using a cyclic permutation we have
\begin{align*}
L \cdot C_2 \e t_d^{-1} t_{d'}^{-1} \cdot t_bt_at_at_b \cdot t_at_at_bt_at_at_b \cdot t_zt_c. 
\end{align*}
By drawing corresponding curves and applying the corresponding Dehn twist, we find that $t_bt_at_at_b(z)=c$. 
This gives $t_bt_at_at_b \cdot t_z \e t_c \cdot t_bt_at_at_b$ by the primitive braid relation. 
Using this equation, we have 
\begin{align*}
L \cdot C_2 \e t_d^{-1} t_{d'}^{-1} \cdot \underline{t_bt_at_at_b \cdot t_at_a \cdot t_c \cdot t_bt_a} t_at_b \cdot t_c. 
\end{align*}
We focus on the underlined part. 
By Lemma~\ref{lemma:10001}, we have $t_bt_at_at_b(a)=a$, $t_at_bt_c(b)=c$ and $t_at_bt_c(a)=b$. 
This gives $t_bt_at_at_b \cdot t_a \e t_a \cdot t_bt_at_at_b$, $t_at_bt_c \cdot t_b \e t_c \cdot t_at_bt_c$ and $t_at_bt_c \cdot t_a \e t_b \cdot t_at_bt_c$. 
Applying them on the underlined parts, we obtain
\begin{align*}
\underline{t_bt_a t_a t_b t_a t_a} t_c t_b t_a \e t_a t_a t_b t_a \underline{t_a t_b t_c t_b t_a} \\
\e t_a t_a t_b t_a t_c t_b t_a t_b t_c. 
\end{align*}
By the braid relation and $t_at_bt_c \cdot t_b \e t_c \cdot t_at_bt_c$ on the underlined parts, we get
\begin{align*}
t_a \underline{t_a t_b t_a} t_c t_b t_a t_b t_c \e t_a t_b \underline{t_a t_b t_c t_b} t_a t_b t_c \\
\e t_a t_b t_c t_a t_b t_c t_a t_b t_c. 
\end{align*}
This finishes the proof. 
\end{proof}

We now prove Theorem~\ref{thmB}. 
\begin{proof}[Proof of Theorem B]
We may assume that two simple closed curves in Proposition~\ref{prop:lanterncommutator} and Theorem~\ref{thm:1000} are same from the primitive braid relation since there is an element $f$ in $\CM_g$ such that $f(c)=c^\prime$ for any two nonseparating curve $c,c^\prime$.

After inserting $5n$ into $n$ in Proposition~\ref{prop:lanterncommutator} and applying a cyclic permutation, by Theorem~\ref{thm:1000} we get
\begin{align*}
\prod_{i=1}^{10(g-1)n} \mathcal{L}_i \cdot \prod_{j=1}^{g-1} C_{3,j}^n \e \prod_{j=1}^{5|n|+3} [\mathcal{X}_j, \mathcal{Y}_j] \cdot \prod_{j=1}^{|n|+3} [\mathcal{V}_j, \mathcal{W}_j], 
\end{align*}
This gives an $\Sigma_g$-bundle $X\to \Sigma_{6|n|+6}$ for odd $g$ (This construction is called the ``subtraction of Lefschetz fibration" introduced in \cite{EKKOS}). 
By Lemma~\ref{lemma:?}, we see that
\begin{align*}
n(T) &= n^+(T) - n^-(T) = 0 - 0, \\
n(C_2) &= n^+(C_2) - n^-(C_2) = (g-1)n - 0, \\
n(L) &= n^+(L) - n^-(L) = 11(g-1)n - 0 
\end{align*}
in the notation of Proposition~\ref{ehktprop0}. Therefore, we have 
\begin{align*}
\sigma(X) = - 1\cdot 0 - 7 \cdot (g-1)n + 1\cdot 11(g-1)n = 4(g-1)n.
\end{align*}
This completes the proof. 
\end{proof}
\begin{rem}
We don't know the surface bundles constructed in Theorem~\ref{thmB} admits a section or not. 
\end{rem}

In the rest of this section, we prove Theorem~\ref{thmC}. 
\begin{proof}[Proof of Theorem C]
Let us consider the two (sub)surfaces of genus $3$ with one boundary component as in Figure \ref{Licurves} and the left side of Figure~\ref{L0L12}. 
Since $a_1,b_1$ are disjoint from $\gamma, \epsilon, d_1$, $t_{a_1},t_{b_1}$ can commute with $t_\gamma,\psi(=t_\epsilon t_\gamma t_{d_1} t_\epsilon)$ modulo $P$ by the commutative relations. 
Therefore, a cyclic permutation and Lemma~\ref{2chainlanterncommutator} give 
\begin{align*}
C_{2,1}L^{-1} &\e t_{a_1}^8 t_{b_1}^4 [t_\gamma,\psi] [Vt_\delta t_z^{-1}, W\phi]. 
\end{align*}
\begin{figure}[hbt]
  \centering
       \includegraphics[scale=.90]{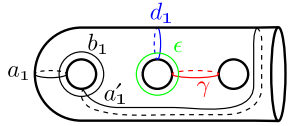}
       \caption{The curves $a_1,a_1',b_1,d_1,\gamma,\epsilon$ on $S$.}
       \label{curvea'}
  \end{figure}
Here, there is an element $f$ in $\CM_3^1$ such that $f(\delta_1)=a_1$ and $f(\delta_1')=a_1'$, where $a_1'$ is the simple closed curve as in Figure~\ref{curvea'}. 
By Proposition~\ref{prop:11-1} and the primitive braid and the commutative relations, we have 
\begin{align*}
{}_{f}\left((L_1)^6 (L_1^\prime)^6\right) C_{2,1}L^{-1} \e [X_1,Y_1] [X_2,Y_2] \cdots [X_5,Y_5] t_{a_1'}^{-6} t_{a_1}^2 t_{b_1}^4 [t_\gamma,\psi] [Vt_\delta t_z^{-1}, W\phi].
\end{align*}
in $\CM_3^1$, where $X_i={}_{f}(X_{i,1})$, $Y_i={}_{f}(Y_{i,1})$. 
Let $f_5:=t_{b_1}^2 \cdot t_{a_1'}t_{b_1}t_{a_1}$ in $\CM_3^1$. 
Since then $t_{a_1'}t_{b_1}t_{a_1}$ maps $(a_1',b_1)$ to $(b_1,a_1)$ by Lemma~\ref{lemma:10001} (2), we see that $f_5$ maps $(a_1',b_1)$ to $(b_1,{}_{t_{b_1}^2}(a_1))$. 
Therefore, the primitive braid relation and Lemma~\ref{lemma:67} give 
\begin{align*}
t_{a_1'}^{-6} t_{a_1}^2 t_{b_1}^4 &= t_{a_1'}^{-6} t_{b_1}^{-2} (t_{b_1}^2 t_{a_1}^2 t_{b_1}^{-2}) t_{b_1}^6 \\
&\e t_{a_1'}^{-6} t_{b_1}^{-2} t_{t_{b_1}^2(a_1)}^2 t_{b_1}^6 \\
&\e [t_{a_1'}^{-6} t_{b_1}^{-2}, f_5]. 
\end{align*}
Note that $t_{a_1'},t_{b_1},f_5$ can commute with $t_\gamma,\psi$ by the commutative relations since $a_1,a_1',b_1$ are disjoint from $\gamma, \epsilon, d_1$. 
By the above argument, Lemma~\ref{lemma:10} gives
\begin{align*}
{}_{f}\left((L_1)^6 (L_1^\prime)^6\right) C_{2,1}L^{-1} \e [X_1,Y_1] [X_2,Y_2] \cdots [X_5,Y_5] [t_{a_1'}^{-6} t_{b_1}^{-2}t_\gamma,f_5\psi] [Vt_\delta t_z^{-1}, W\phi].
\end{align*}
in $\CM_3^1$. 
In particular, this equation holds in $\CM_3$, so we get an $\Sigma_3$-bundle over $\Sigma_7$ with a $0$-section. 
Therefore, we have
\begin{align*}
n(T) &= n^+(T) - n^-(T) = 0 - 0, \\
n(C_2) &= n^+(C_2) - n^-(C_2) = 1 - 0, \\
n(L) &= n^+(L) - n^-(L) = 12 - 1 
\end{align*}
in the notation of Proposition~\ref{ehktprop0}, and 
\begin{align*}
\sigma(X) = -0 - 7 \cdot 1 + 1\cdot 11 = 4. 
\end{align*}
The proof is complete. 
\end{proof}

\section{Proofs of Theorem E (1) and (2)}
Since we don't use the results proved from here to compute signatures of surface bundles, replacing ``$\e$" by ``$=$" and ignoring the numbers of the relators $L,T,C_2$ pose no problem. 
From now on, we do not write $\e$ and relators explicitly.

We use the next result to prove Theorem~\ref{thmE} (1). 
\begin{thm}[Bavard \cite{ba}]\label{bavard}
Let $h_1, g_1, h_2, g_2, \ldots, h_k, g_k$ be elements in a group $G$. 
Then, for any integer $n$, $([h_1, g_1][h_2, g_2] \cdots [h_k, g_k])^n$ is written as a product of $|n|(k-1)+\left[\frac{|n|}{2}\right]+1$ commutators. 
\end{thm}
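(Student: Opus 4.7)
\smallskip

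\noindent\textbf{Proof proposal.} The plan is to induct on $k$, the number of commutator factors. First one reduces to the case $n \geq 0$: since $[h_i,g_i]^{-1} = [g_i,h_i]$ is again a commutator, $x^{-1}$ is a product of $k$ commutators (written in reverse order), so an upper bound on $\cl(x^n)$ for $n \geq 0$ suffices. The base case $k = 1$ is the statement $\cl([h,g]^n) \leq [n/2] + 1$ for the $n$th power of a single commutator, which is a classical bound due to Culler; it can be proved by an explicit algebraic identity that pairs two consecutive copies of $[h,g]$ into a single commutator, or equivalently by constructing a once-punctured surface of genus $[n/2]+1$ whose boundary word represents $[h,g]^n$.

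For the inductive step, set $y = [h_1,g_1]$ and $z = [h_2,g_2]\cdots[h_k,g_k]$, so that $x = yz$. A short induction on $n$ verifies the telescoping identity
\[
(yz)^n \;=\; \prod_{i=0}^{n-1} \bigl(z^{i}\, y\, z^{-i}\bigr) \cdot z^n,
\]
which writes $x^n$ as a product of $n$ conjugates of the single commutator $y$, followed by $z^n$. Each factor $z^{i} y z^{-i}$ is a conjugate of $[h_1,g_1]$, hence itself a commutator, so these contribute $n$ commutators. Applying the inductive hypothesis to $z$, which is a product of $k-1$ commutators, yields $\cl(z^n) \leq n(k-2) + [n/2] + 1$, and combining these estimates gives $\cl(x^n) \leq n + n(k-2) + [n/2] + 1 = n(k-1) + [n/2] + 1$, as required.

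The main obstacle is the base case. The telescoping identity above is purely formal and requires no creativity, but the bound $\cl([h,g]^n) \leq [n/2] + 1$ for a single commutator is delicate: the trivial estimate only gives $\cl([h,g]^n) \leq n$, and saving roughly one commutator for each pair of powers demands either a non-obvious word-combinatorial identity in the free group or a geometric argument exhibiting $[h,g]^n$ as the boundary of an orientable surface of genus $[n/2]+1$ (obtained by gluing two once-punctured tori along portions of their boundaries and iterating).
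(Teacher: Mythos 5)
Your argument is correct. Note that the paper itself gives no proof of this statement --- it is quoted from Bavard's paper \cite{ba} and used as a black box --- so there is nothing internal to compare against; but your derivation is the standard one and it checks out. The reduction to $n\geq 0$ via $[h,g]^{-1}=[g,h]$ is fine; the telescoping identity $(yz)^n = y\,(zyz^{-1})(z^2yz^{-2})\cdots(z^{n-1}yz^{-(n-1)})\,z^n$ is exactly Lemma~\ref{lemma:31} of the paper and correctly contributes $n$ commutators per inductive step; and the arithmetic $n+n(k-2)+[n/2]+1=n(k-1)+[n/2]+1$ closes the induction. The only piece that is not self-contained is the base case $\cl([h,g]^n)\leq [n/2]+1$, which you correctly isolate as the real content and attribute to Culler (the paper's reference \cite{cu}); your informal gloss that one ``pairs two consecutive copies into a single commutator'' is slightly misleading as stated (that would give $\cl([h,g]^{2m})\leq m$, which is false --- the saving is one commutator per two \emph{additional} powers beyond the first), but since you only invoke the correct bound $[n/2]+1$ and not the loose description, the proof stands.
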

\begin{proof}[Proof of Thoerem E (1)]
In the notation of Proposition~\ref{prop:2}, if $g=1$, then $d$ and $d'$ are trivial. 
Therefore, we see that $t_{b}^{12n}$ can be written as a product of $|n|+1$ commutators in $\CM_1$. 
This gives $\cl_{\CM_1}(t_{b}^{12n})\leq |n|+1$ for any integer $n$.

To obtain a contradiction with $\scl_{\CM_1}(t_{s_0})=1/12$ (see Section~\ref{scl}), suppose that for some integer $k\geq 1$, $t_{b}^{12k}$ can be written as a product of $k$ commutators. 
Then, Theorem~\ref{bavard} gives 
\begin{align*}
\cl_{\CM_1}(t_{b}^{12kn}) \leq n(k-1)+ \left[\frac{n}{2}\right]+1.
\end{align*}
for any positive integer $n$. Therefore, we have
\begin{align*}
\scl_{\CM_1}(t_{b}^{12k}) \leq (k-1)+\frac{1}{2} = k-\frac{1}{2}.
\end{align*}
Since $\scl_{\CM_1}(t_{b_1}) = \scl_{\CM_1}(t_{b}^{12k})/12k$ (see Section~\ref{scl}), we obtain 
\begin{align*}
\scl_{\CM_1}(t_{b}) \leq \dfrac{1}{12} - \dfrac{1}{24k} < \dfrac{1}{12}. 
\end{align*}
This contradicts our assumption, which proves the theorem. 
\end{proof}

Next, we give a proof of Theorem~\ref{thmE} (2). 
\begin{proof}[Proof of Theorem E (2)]
\begin{figure}[hbt]
  \centering
       \includegraphics[scale=.90]{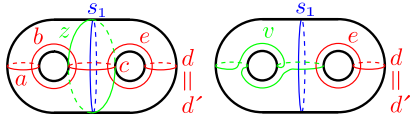}
       \caption{$\Sigma_2$ and the curves $a,b,c,d,e,v$ on $\Sigma_1^2$.}
       \label{genus2surface}
  \end{figure}
We consider the relation (\ref{relation10000}) and embed $\Sigma_1^2$ into $\Sigma_2$ so that $d=d'$ (see Figure~\ref{genus2surface}). 
Lemma~\ref{lemma:31} (2) and the primitive braid relation ${}_{t_{b}^{-2i+2}} (t_{v}) = {}_{t_{b}^{-2i+1}} (t_{t_{b}(v)})$ give  
\begin{align*}
(t_{b} t_{v})^{2n} &\e t_{b}^{2n} \prod_{i=n}^1 {}_{t_{b}^{-2i+1}} (t_{v}t_{t_{b}(v)}). 
\end{align*}
Here, the primitive braid relations give 
\begin{align*}
t_{a}^{4n} t_{c}^{4n} t_{b}^{2n} &= t_{b}^{10n} \cdot t_{b}^{-4n} (t_{b}^{-6n} t_{a}^{4n} t_{b}^{6n}) t_{b}^{-4n} (t_{b}^{-2n} t_{c}^{4n} t_{b}^{2n}) \\
&\e t_{b}^{10n} \cdot t_{b}^{-4n} t_{t_{b}^{-6n}(a)}^{4n} t_{b}^{-4n} t_{t_{b}^{-2n} (c)}^{4n}. 
\end{align*}
By combining the above two relations with the relation (\ref{relation10000}) and using the commutative relations, we obtain 
\begin{align*}
C_3^n \e   t_{b}^{10n} \cdot t_{b}^{-4n} t_{t_{b}^{-6n}(a)}^{4n} t_{b}^{-4n} t_{t_{b}^{-2n} (c)}^{4n} \cdot \prod_{i=n}^1 {}_{t_{b}^{-2i+1}} (t_{v} t_{d}^{-1} t_{t_{b}(v)} t_{d}^{-1}). 
\end{align*}
Since $t_at_bt_c$ maps $(a,b)$ to $(b,c)$ by Lemma~\ref{lemma:10001} (2), we find that $t_b^{-2n}t_at_bt_ct_b^{6n}$, denoted $f_3$, maps $(t_{b}^{-6n}(a),b)$ to $(b,t_{b}^{-2n}(c))$. 
Let $e$ be a nonseparating curve as in Figure~\ref{genus2surface}. 
Since $t_et_dt_vt_e$ maps $(v,d)$ to $(d,v)$ by Lemma~\ref{lemma:10001} (1), $t_b t_et_dt_vt_e$, denoted $f_4$, maps $(v,d)$ to $(d,t_b(v))$ by $i(b,d)=0$. 
By Lemma~\ref{lemma:67} we see that 
\begin{align*}
t_{b}^{-4n} t_{t_{b}^{-6n}(a)}^{4n} t_{b}^{-4n} t_{t_{b}^{-2n} (c)}^{4n} &= [t_{b}^{-4n} t_{t_{b}^{-6n}(a)}^{4n},f_3], \\
t_{v} t_{d}^{-1} t_{t_{b}(v)} t_{d}^{-1} &= [t_{v} t_{d}^{-1},f_4]. 
\end{align*}
Since the conjugation of a commutator is also a commutator, Theorem~E~(2) follows. 
\end{proof}

\section{Scl of the Dehn twist along a separating curve}\label{Scl of the Dehn twist along a separating curve}

\subsection{A separating curve of type $1$}
We show Theorem~\ref{thmD} (2) and E (3).

We consider the subsurface $S_1^2$ in the proof of Theorem~\ref{thm:1000} and the curves $a_1,b_1,c_1,s_{1,1},z_1,d_1,d_{g-1}$ as in Figure~\ref{Li}. 
The separating curve $s_{1,1}$ is  of type $1$. 
\begin{prop}\label{separatinggenus1}
For any integer $n$, there are elements $V'_1,W'_1, V'_2,W'_2,\ldots,V'_{\left[\frac{|n|}{2}\right]+1},$ $W'_{\left[\frac{|n|}{2}\right]+1}$ in $\CM(S_1^2)$ such that the following holds in $\CM(S_1^2)$:
\[t_{s_{1,1}}^n = [V'_1,W'_1][V'_2,W'_2] \cdots [V'_{\left[\frac{|n|}{2}\right]+1}, W'_{\left[\frac{|n|}{2}\right]+1}] t_{d_{g-1}}^n t_{d_1}^n.\] 
\end{prop}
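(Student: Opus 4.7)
My approach is to first establish a base-case factorization of $t_{s_{1,1}}$ as a single commutator times the boundary twists, and then obtain the $n$-th power by invoking Bavard's Theorem~\ref{bavard}.

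Specifically, I will first prove that there exist $V, W \in \CM(S_1^2)$ with
\begin{equation*}
t_{s_{1,1}} \e [V, W] \cdot t_{d_{g-1}}\, t_{d_1}. \tag{$\ast$}
\end{equation*}
For this, I will apply the lantern relator $L$ from Lemma~\ref{lemma:?} on $S_1^2$, relating the curves $a_1, c_1, z_1, s_{1,1}, d_1, d_{g-1}$ of Figure~\ref{Li}. Since $\{a_1, d_1, d_{g-1}\}$ are pairwise disjoint from $\{c_1, z_1\}$ and pairwise among themselves, the lantern relation $t_{a_1}^{2}\, t_{d_1}\, t_{d_{g-1}} \e t_{c_1}\, t_{s_{1,1}}\, t_{z_1}$ rearranges modulo $P$ to
\[ t_{s_{1,1}} \e t_{a_1}^{\,2}\, t_{c_1}^{-1}\, t_{z_1}^{-1} \cdot t_{d_1}\, t_{d_{g-1}}, \]
so it remains to realize the residual $t_{a_1}^{2}\, t_{c_1}^{-1}\, t_{z_1}^{-1}$ as a single commutator. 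For this I will construct a suitable $f \in \CM(S_1^2)$, built out of Dehn twists following Lemmas~\ref{lemma:10002} and~\ref{lemma:10003} together with a half-twist-type element analogous to $H_s$ in the proof of Proposition~\ref{prop:10}, whose geometric action pairs the two sides of $a_1$ (responsible for the squared factor in the lantern) with $c_1$ and $z_1$. Applying Lemma~\ref{lemma:67} to $f$ then yields the single commutator $[V, W]$ realizing the residual.

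Given $(\ast)$, the general case follows easily: because $d_1$ and $d_{g-1}$ are boundary components of $S_1^2$, every element of $\CM(S_1^2)$ fixes them, so $t_{d_1}$ and $t_{d_{g-1}}$ commute with $V$, $W$, and $[V,W]$ modulo $P$ by Remark~\ref{fundamental}(2). Raising $(\ast)$ to the $n$-th power therefore gives
\[ t_{s_{1,1}}^n \e [V, W]^n \cdot t_{d_{g-1}}^n\, t_{d_1}^n, \]
and Bavard's Theorem~\ref{bavard} (applied with $k = 1$) writes $[V, W]^n$ as a product of exactly $\left[\tfrac{|n|}{2}\right] + 1$ commutators, completing the factorization.

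The main obstacle is the construction of $f$ in the base case. Since $a_1$ is a nonseparating curve on $S_1^2$, there is no second disjoint curve isotopic to $a_1$, so Lemma~\ref{lemma:67} with two targets both isotopic to $a_1$ is not directly applicable. Instead, the factor $t_{a_1}^{2}$ in the lantern must be handled via an element of $\CM(S_1^2)$ that exchanges the two sides of $a_1$ — i.e., the two $a_1$-boundary components of the pair of pants obtained by cutting $S_1^2$ along $a_1$ — while carrying $c_1$ and $z_1$ to appropriate positions. The existence of such a half-twist-type diffeomorphism, and the verification that its action with the chosen Dehn twists realizes $t_{a_1}^{2}\, t_{c_1}^{-1}\, t_{z_1}^{-1}$ as a single commutator via Lemma~\ref{lemma:67}, is the geometric heart of the argument.
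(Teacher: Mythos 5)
Your overall scheme (lantern relation, boundary twists central in $\CM(S_1^2)$, then Bavard's Theorem~\ref{bavard} with $k=1$ giving $\left[\frac{|n|}{2}\right]+1$ commutators for $[V,W]^n$) would indeed produce the stated count, but everything rests on your base case $(\ast)$, i.e.\ on writing the residual $t_{a_1}^{2}t_{c_1}^{-1}t_{z_1}^{-1}$ as a \emph{single} commutator, and that is precisely the step you have not proved; moreover, the mechanism you sketch cannot work as described. Lemma~\ref{lemma:67} needs an element $f$ carrying the ordered tuple of source curves to the ordered tuple of target curves; reading your word as $t_{a_1}t_{a_1}t_{c_1}^{-1}t_{z_1}^{-1}$, the source tuple is $(a_1,a_1)$ and you would need $f(a_1)=z_1$ and $f(a_1)=c_1$ simultaneously, which is impossible since $c_1$ and $z_1$ are not isotopic. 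A half-twist exchanging the two $a_1$-boundary components of $S_1^2$ cut along $a_1$ does not help: it still sends the isotopy class of $a_1$ to a single class and leaves $t_{a_1}$ unchanged, so the degeneracy persists. (The analogous trick in the proof of Proposition~\ref{prop:10} worked because the two curves being paired there, $a_1$ and $\alpha_{12}$, were genuinely distinct.) Without this step you only get the residual as a product of two commutators, e.g.\ $(t_{a_1}t_{c_1}^{-1})(t_{a_1}t_{z_1}^{-1})=[t_{a_1},f_1][t_{a_1},f_2]$, and then Bavard with $k=2$ gives $|n|+\left[\frac{|n|}{2}\right]+1$ commutators, which is too many.

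The claim $(\ast)$ itself is true (it is the $n=1$ case of the proposition), and the paper's proof shows how to break the degeneracy: instead of cancelling both $t_{a_1}$'s against the pair $(c_1,z_1)$, it first uses the $t_{c_1}^{\pm1}$ factors to replace $t_{z_1}^{-1}$ by twists about the conjugated curves $t_{c_1}^{-k}(z_1)$, so that for $n=1$ the residual becomes the interleaved word $t_{t_{c_1}^{-1}(z_1)}^{-1}\,t_{a_1}\,t_{c_1}^{-1}\,t_{a_1}$, which is exactly of the form required by Lemma~\ref{lemma:67} for an element ($\phi_2$ in the paper) mapping $\bigl(t_{c_1}^{-1}(z_1),a_1\bigr)$ to $(a_1,c_1)$. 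For general $n$ the paper does not pass through Bavard at all: it distributes the power, groups the conjugates ${}_{t_{c_1}^{-2i}}\bigl(t_{t_{c_1}(z_1)}^{-1}t_{a_1}t_{z_1}^{-1}t_{a_1}\bigr)$ in pairs, each giving one commutator, plus one more commutator for $t_{c_1}^{-2m}t_{a_1}^{2m}$ (with a similar odd case), so the exact count $\left[\frac{|n|}{2}\right]+1$ comes out directly and with an explicit factorization. If you establish $(\ast)$ by such a rewriting, your Bavard reduction of the general case is legitimate (though less explicit); as written, the geometric heart of your argument is missing.
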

\begin{proof}
From the lantern relation $t_{c_1} t_{s_{1,1}} t_{z_1} = t_{d_1} t_{d_{g-1}} t_{a_1}^2$, we get $t_{s_{1,1}} = t_{c_1}^{-1} t_{d_1} t_{d_{g-1}} t_{a_1}^2 t_{z_1}^{-1}$. 
Since $a_1,d_1,d_{g-1}$ are disjoint from each other and $c_1,z_1$, using the commutative relation and Lemma~\ref{lemma:31} (1), we have 
\begin{align*}
t_{s_{1,1}}^n &= (t_{c_1}^{-1}t_{z_1}^{-1})^n t_{a_1}^n t_{a_1}^n t_{d_{g-1}}^n t_{d_1}^n \\
&= {}_{t_{c_1}^{-1}}(t_{z_1}^{-1}) {}_{t_{c_1}^{-2}}(t_{z_1}^{-1}) \cdots {}_{t_{c_1}^{-n}}(t_{z_1}^{-1}) \cdot t_{c_1}^{-n} t_{a_1}^n t_{a_1}^n t_{d_{g-1}}^n t_{d_1}^n. 
\end{align*}
From the commutative relations and the primitive braid relations ${}_{t_{c_1}^{-2i+1}}(t_{z_1}^{-1}) = {}_{t_{c_1}^{-2i}} (t_{t_{c_1}(z_1)}^{-1})$ and ${}_{t_{c_1}^{-2m-1}}(t_{z_1}^{-1}) = t_{t_{c_1}^{-2m-1}(z_1)}^{-1}$, we have 
\begin{align*}
t_{s_{1,1}}^{2m} = \prod_{i=1}^{m} {}_{t_{c_1}^{-2i}} (t_{t_{c_1}(z_1)}^{-1} t_{a_1} t_{z_1}^{-1} t_{a_1}) \cdot t_{c_1}^{-2m} t_{a_1}^{2m} \cdot t_{d_{g-1}}^{2m} t_{d_1}^{2m}
\end{align*}
and 
\begin{align*}
t_{s_{1,1}}^{2m+1} = \prod_{i=1}^{m} {}_{t_{c_1}^{-2i}} (t_{t_{c_1}(z_1)}^{-1} t_{a_1} t_{z_1}^{-1} t_{a_1})  \cdot t_{t_{c_1}^{-2m-1}(z_1)}^{-1} t_{a_1}^{2m+1} t_{c_1}^{-2m-1} t_{a_1} \cdot t_{d_{g-1}}^{2m+1} t_{d_1}^{2m+1}. 
\end{align*}
Since $t_{b_1}t_{a_1}t_{z_1}t_{b_1}$ maps $(z_1,a_1)$ to $(a_1,z_1)$ by Lemma~\ref{lemma:10001} (1), $t_{b_1}t_{a_1}t_{z_1}t_{b_1}t_{c_1}^i$ maps $(t_{c_1}^{-i}(z_1),a_1)$ to $(a_1,z_1)$ by $i(a_1,c_1)=0$. 
From the proof of Lemma~\ref{lemma:?}, $t_{b_1}t_{a_1}t_{a_1}t_{b_1}$ maps $(z_1,a_1)$ to $(c_1,a_1)$. 
Therefore, when we set $\phi_1:=t_{b_1}t_{a_1}t_{z_1}t_{b_1}t_{c_1}^{-1}$ and $\phi_2:=t_{b_1}t_{a_1}t_{a_1}t_{b_1} \cdot t_{b_1}t_{a_1}t_{z_1}t_{b_1} t_{c_1}^{2m+1}$, $\phi_1$ and $\phi_2$ maps $(t_{c_1}(z_1),a_1)$ and $(t_{c_1}^{-2m-1}(z_1),a_1)$ to $(a_1,z_1)$ and $(a_1,c_1)$, respectively. 
Moreover, $t_{b_1}t_{c_1}t_{a_1}t_{b_1}$, denoted $\phi_3$, maps $a_1$ maps $c_1$ by Lemma~\ref{lemma:10001} (1). 
Lemma~\ref{lemma:67} gives 
\begin{align*}
t_{t_{c_1}(z_1)}^{-1} t_{a_1} t_{z_1}^{-1} t_{a_1} &= [t_{t_{c_1}(z_1)}^{-1} t_{a_1}, \phi_1], \\
t_{t_{c_1}^{-2m-1}(z_1)}^{-1} t_{a_1}^{2m+1} t_{c_1}^{-2m-1} t_{a_1} &= [t_{t_{c_1}^{-2m-1}(z_1)}^{-1} t_{a_1}^{2m+1}, \phi_2], \\
t_{c_1}^{-2m} t_{a_1}^{2m} &= [t_{c_1}^{-2m}, \phi_3]. 
\end{align*}
Since the conjugation of a commutator is also a commutator, the proof is complete. 
\end{proof}

\begin{proof}[Proof of Theorem D (2) and E (3)]
If $g=2$, then by $d_1=d_{g-1}$, we have 
\[t_{s_{1,1}}^{5n} = [V'_1,W'_1][V'_2,W'_2] \cdots [V'_{\left[\frac{|5n|}{2}\right]+1}, W'_{\left[\frac{|5n|}{2}\right]+1}] \cdot t_{d_1}^{10n}.\]
By Theorem~\ref{thmE} (2), Theorem~\ref{thmE} (3) is proved.

Note that $s_{1,1}$ is a separating curve of genus-$1$. 
In the notation of proofs of Theorem~\ref{thm:1000} and Proposition~\ref{separatinggenus1}, we write
\begin{align*}
&s_{1,j} := r^{j-1}(s_{1,1}),& &d_j := r^{j-1}(d_1),& \\
&V'_{i,j} := {}_{r^{j-1}}(V'_i),& &W'_{i,j} := {}_{r^{j-1}}(W'_i)& 
\end{align*}
for $j=1,2,\ldots,g-1$. Note that $d_g=d_1$. 
Then, for $j=1,2,\ldots,g-1$, Proposition~\ref{separatinggenus1} and the primitive braid relations give 
\[t_{s_{1,j}}^{5n} = [V'_{1,j},W'_{1,j}][V'_{2,j},W'_{2,j}] \cdots [V'_{\left[\frac{|5n|}{2}\right]+1,j}, W'_{\left[\frac{|5n|}{2}\right]+1,j}] t_{d_j}^{5n} t_{d_{j+1}}^{5n}.\]
Here, any simple closed curves on $\mathrm{Int}(r^{j-1}(S_1^2))$ are disjoint from any simple closed curves on $\mathrm{Int}(r^{j^\prime-1}(S_1^2))$ if $j\neq j^\prime$, and $d_j,d_{j+1}$ are boundary curves of $r^{j-1}(S_1^2)$. 
Hence, for any elements $e_j$ in $\CM(r^{j-1}(S_1^2))$ and any element $f_{j^\prime}$ in $\CM(r^{j^\prime-1}(S_1^2))$, we have $e_j f_{j^\prime} = f_{j^\prime} e_j$ by the commutative relations and the property of boundary curves if $j\neq j^\prime$. 
When we set $\mathcal{V}'_i = V'_{i,1} V'_{i,2} \cdots V'_{i,g-1}$ and $\mathcal{W}'_i = W'_{i,1} W'_{i,2} \cdots W'_{i,g-1}$, from Lemma~\ref{lemma:10} and $d_g=d_1$, we have 
\begin{align*}
t_{s_{1,1}}^{5n} t_{s_{1,2}}^{5n} \cdots t_{s_{1,g-1}}^{5n} =  [\mathcal{V}'_1,\mathcal{W}'_1] [\mathcal{V}'_2,\mathcal{W}'_2] \cdots [\mathcal{V}'_{\left[\frac{|5n|}{2}\right]+1}, \mathcal{W}'_{\left[\frac{|5n|}{2}\right]+1}]\cdot t_{d_1}^{10n} t_{d_2}^{10n} \cdots t_{d_{g-1}}^{10n}. 
\end{align*}
Moreover, Lemma \ref{lemma:8} gives 
\begin{align*}
t_{s_{1,g-1}}^{5(g-1)n} [T_{s_{1,1}}, r] =  [\mathcal{V}'_1,\mathcal{W}'_1] [\mathcal{V}'_2,\mathcal{W}'_2] \cdots [\mathcal{V}'_{\left[\frac{|5n|}{2}\right]+1}, \mathcal{W}'_{\left[\frac{|5n|}{2}\right]+1}] \cdot t_{d_{g-1}}^{10(g-1)n} [T_d,r], 
\end{align*}
where $T_{s_{1,1}} = t_{s_{1,1}}^{5n} t_{s_{1,2}}^{10n} \cdots t_{s_{1,g-2}}^{5(g-2)n}$ and $T_d = t_{d_1}^{10n} t_{d_2}^{20n} \cdots t_{d_{g-2}}^{10(g-2)n}$. 
We obtain Theorem~\ref{thmD} (2) by $[T_d,r][T_{s_{1,1}}, r]^{-1} = [T_d,r][r, T_{s_{1,1}}]$, Lemma~\ref{XYYZ} and Theorem~\ref{thm:1000}. 
\end{proof}

\subsection{A separating curve of type $h$}
We give the proof of Theorem~\ref{thmD} (3). 

Let $a,b,c,d,e,x,y,z$ be the nonseparating curves on the genus-$h$ subsurface $S_h^1$ of $\Sigma_g$ bounded by the separating curve $s_h$ of type $s_h$ as in Figure~\ref{genus-h}. 
  \begin{figure}[hbt]
  \centering
       \includegraphics[scale=.90]{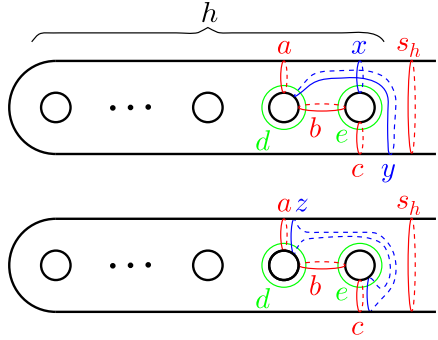}
       \caption{The curves $s_h,a,b,c,d,e,x,y,z$.}
       \label{genus-h}
  \end{figure}
\begin{prop}[\cite{bkm}]\label{separatinggenush}
For any integer $n$, there are elements $X_1,Y_1,X_2,Y_2,\ldots,$ $X_{\left[\frac{|n|+3}{2}\right]}, Y_{\left[\frac{|n|+3}{2}\right]}$ in $\CM(S_h^1)$ such that the following holds in $\CM(S_h^1)$:
\[t_{s_h}^n = [X_1,Y_1][X_2,Y_2] \cdots [X_{\left[\frac{|n|+3}{2}\right]}, Y_{\left[\frac{|n|+3}{2}\right]}]. \]
\end{prop}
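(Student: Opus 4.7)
The plan is to factor $t_{s_h}$ in $\CM(S_h^1)$ via a lantern-type relation among the curves $a,b,c,d,e,x,y,z$ in Figure~\ref{genus-h}, then raise to the $n$-th power and invoke the machinery of Section~\ref{lemmas} to collapse the resulting product into roughly $|n|/2$ commutators. A natural starting point is a lantern relation on a four-holed sphere inside $S_h^1$, giving an identity of the shape $t_{s_h}=t_x t_y t_z \cdot t_a^{-1} t_b^{-1} t_c^{-1} t_d^{-1}$ (up to suitable signs and reorderings dictated by the embedding), combined with a 2-chain or braid identity among $a,b,c,d,e$ that expresses one of the boundary factors in terms of twists along curves that admit a diffeomorphism $f\in\CM(S_h^1)$ mapping one group of curves onto another.

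Once such an identity is in hand, I would raise both sides to the $n$-th power, use the commutative relations coming from the disjointness patterns visible in Figure~\ref{genus-h}, and perform cyclic permutations so that powers of twists along curves in the image of $f$ sit immediately next to powers of twists along their preimages. Lemma~\ref{lemma:67} then converts each juxtaposed block $t_{a_1}^{k_1}\cdots t_{a_n}^{k_n} t_{b_n}^{-k_n}\cdots t_{b_1}^{-k_1}$ into a single commutator $[t_{a_1}^{k_1}\cdots t_{a_n}^{k_n},f]$, so the exponent $n$ is absorbed into the \emph{argument} of the commutator rather than into its count. Any residual products $t_{c_1}^n t_{c_2}^n\cdots$ coming from leftover factors (typically boundary-parallel or chain-related curves) are then rewritten using Lemma~\ref{lemma:8} parts~(1)--(3), which allows each application to consume two consecutive powers at the cost of one commutator; this is the mechanism that gives $|n|/2$ rather than $|n|$ in the final count.

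The decisive step — and the main obstacle — is arranging the bookkeeping so that \emph{pairs} of $n$-th powers can be collapsed simultaneously. A naive application of Lemma~\ref{lemma:67} to each lantern factor separately would produce a bound of order $|n|+c$; the sharper bound $[(|n|+3)/2]$ requires that the diffeomorphism $f$ be chosen to permute the curves so that Lemma~\ref{lemma:8}(1) can be applied to two factors at once (as was done, for instance, in the proof of Proposition~\ref{prop:2} where the symmetry of the 3-chain was exploited to pair $t_a$ with $t_c$ and reduce a length-$4n$ product to a length-$2n$ commutator). Concretely, I would look for an involution-like element $f$ in $\CM(S_h^1)$ swapping two symmetric configurations of curves appearing in the lantern factorization, so that both halves of the factored power can be absorbed in parallel.

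Finally, I would collect the output: the main lantern--plus--chain block contributes $[(|n|+1)/2]$ commutators after the pairing trick, and at most one further commutator arises from the residual boundary-parallel powers and one from combining the leftover $[X,Y][Y,Z]$ terms via Lemma~\ref{XYYZ}. This gives the claimed total of $[(|n|+3)/2]$ commutators, all lying in $\CM(S_h^1)$ by construction. Since the paper attributes this result to \cite{bkm}, I would cross-check the exact curve configuration and pairing against the construction there to ensure the commutator count is sharp.
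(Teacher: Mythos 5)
Your outline has the right flavor (a lantern relation through $s_h$, powering up, commuting disjoint twists, and looking for a swapping diffeomorphism so that factors can be absorbed in pairs), but the mechanism you name for the crucial halving is the wrong one, and as stated it cannot produce the claimed identity. The paper starts from the lantern relation $t_xt_yt_z=t_bt_ct_{s_h}t_a$ (only three other boundary twists appear; no 2-chain is used), writes $t_{s_h}^n=(t_xt_y)^n t_z^n t_a^{-n}t_b^{-n}t_c^{-n}$, expands $(t_xt_y)^n$ by Lemma~\ref{lemma:31}, interleaves the $n$ copies of $t_a^{-1}$, and then pairs consecutive conjugates so that each block $t_yt_a^{-1}t_{t_x(y)}t_a^{-1}$ is a single commutator $[t_yt_a^{-1},\phi']$ by Lemma~\ref{lemma:67}, because $\phi'=t_xt_dt_at_yt_d$ maps $(y,a)$ to $(a,t_x(y))$ (Lemma~\ref{lemma:10001}(1)); this pairing is precisely where the slope $1/2$ in $[(|n|+3)/2]$ comes from. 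You instead attribute the halving to Lemma~\ref{lemma:8}, claiming each application ``consumes two consecutive powers at the cost of one commutator''; but Lemma~\ref{lemma:8} always yields exactly one commutator together with a residual twist $t_{a_{m+1}}^{k_1+\cdots+k_{m+1}}$ carrying the full total exponent --- it neither halves a count nor removes the residue. Followed literally, your plan either gives a number of commutators independent of $n$ (impossible: by the remark after the proposition, $\cl_{\CM(S_h^1)}(t_{s_h}^n)$ equals $[(n+3)/2]$, so it grows linearly) or leaves a leftover power that cannot be eliminated.

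The second gap is the treatment of the leftover powers. The statement to be proved has no residual Dehn twist at all, and since $s_h$ is the boundary curve of $S_h^1$ a leftover ``boundary-parallel'' power would make the argument circular. The paper disposes of the residue by arranging it as $t_x^nt_b^{-n}t_z^nt_c^{-n}$, two positive and two negative powers matched by the element $\tau'=t_dt_z\cdot t_et_ct_xt_e\cdot t_bt_d$ of Lemma~\ref{lemma:10003}, which maps $(x,b)$ to $(c,z)$, so the whole block is one commutator by Lemma~\ref{lemma:67}; one further commutator absorbs the extra factor in the odd case. Your bookkeeping also overshoots: $[(|n|+1)/2]$ plus two additional commutators is $[(|n|+5)/2]$, one more than claimed. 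So the missing content is the explicit pairing device (the swap $(y,a)\mapsto(a,y)$ composed with $t_x$, applied to consecutive conjugates of $t_y$ with interleaved $t_a^{-1}$'s) and the signed-pair structure of the leftover twists; without these, the route through Lemma~\ref{lemma:8} does not reach the bound $[(|n|+3)/2]$.
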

\begin{proof}
By the lantern relation $t_xt_yt_z=t_bt_ct_{s_h}t_a$, we have $t_xt_y=t_bt_ct_{s_h}t_at_z^{-1}$. 
Since $a,b,c,d,s_h,z$ are disjoint from each other, by the commutative relations and Lemma~\ref{lemma:31} (1), we have
\begin{align*}
t_{s_h}^n &= (t_xt_y)^nt_z^nt_a^{-n}t_b^{-n}t_c^{-n} \\
&= {}_{t_x}(t_y) {}_{t_x^2}(t_y) \cdots {}_{t_x^n}(t_y) t_x^n t_z^n t_a^{-n}t_b^{-n}t_c^{-n}.
\end{align*}
From the primitive braid relations ${}_{t_x^{2i}}(t_{y}) = {}_{t_x^{2i-1}}(t_{t_x(y)})$ and ${}_{t_x^{2m+1}}(t_y) = t_{t_x^{2m+1}(y)}$ and the commutative relations, we obtain 
\begin{align*}
t_{s_h}^{2m} = \prod_{i=1}^m {}_{t_x^{2i-1}}(t_y t_a^{-1} t_{t_x(y)} t_a^{-1}) \cdot t_x^{2m} t_b^{-2m} t_z^{2m} t_c^{-2m}, 
\end{align*}
and 
\begin{align*}
t_{s_h}^{2m+1} = \prod_{i=1}^m {}_{t_x^{2i-1}}(t_y t_a^{-1} t_{t_x(y)} t_a^{-1}) \cdot t_{t_x^{2m+1}(y)}t_a^{-1} \cdot t_x^{2m+1} t_b^{-2m-1} t_z^{2m+1} t_c^{-2m-1}. 
\end{align*}
Since $t_dt_at_yt_d$ maps $(y,a)$ to $(a,y)$ by Lemma~\ref{lemma:10001} (1), $t_xt_dt_at_yt_d$ and $t_dt_at_yt_dt_x^{-2m-1}$, denoted $\phi'$ and $\psi'$, map $(y,a)$ and $(t_x^{2m+1}(y),a)$ to $(a,t_x(y))$ and $(a,y)$ by $i(a,x)=0$, respectively. 
Moreover, by Lemma~\ref{lemma:10003}, $t_dt_z \cdot t_et_ct_xt_e \cdot t_bt_d$, denoted $\tau^\prime$, maps $(x,b)$ to $(c,z)$. 
Therefore, for $k=2m,2m+1$, Lemma~\ref{lemma:67} gives
\begin{align*}
t_y t_a^{-1} t_{t_x(y)} t_a^{-1} &= [t_y t_a^{-1}, \phi^\prime], \\
t_{t_x^{2k+1}(y)}t_a^{-1} &= [t_{t_x^{2k+1}(y)}, \psi^\prime], \\
t_x^k t_b^{-k} t_z^k t_c^{-k} &= [t_x^k t_b^{-k}, \tau^\prime]. 
\end{align*}
Since the conjugation of a commutator is a commutator, this finishes the proof. 
\end{proof}
\begin{rem}
The above proof was given in the first draft of \cite{bkm}. Using Proposition~\ref{separatinggenush} it was shown in \cite{bkm} that for a boundary curve $\partial$ of $\Sigma_g^r$, $\cl_{\CM_g^r}(t_\partial^n)=[(n+3)/2]$ if $g\geq 2$ and $r\geq 1$, and therefore, $\scl_{\CM_g^r}(t_\partial)=1/2$. 
\end{rem}

\begin{proof}[Proof of Theorem D (3)]
Suppose that $g\geq 3$ and $h\geq 2$. 
Let $S_h^1$ be the genus-$h$ subsurface of $\Sigma_g$ with one boundary component $s_h$. 
When we write $g=hk+g^\prime$, where $g^\prime=0,1,\ldots,h-1$, there is an element $\rho_k$ in $\CM_g$ such that the subsurfaces $S_h^1, \rho_k(S_h^1), \ldots, \rho_k^{k-1}(S_h^1)$ are disjoint from each other and $\rho_k^k(S_h^1)=S_h^1$. 
In the notation of Proposition~\ref{separatinggenush}, we write 
\begin{align*}
&s_{h,j} := \rho_k^{j-1}(s_h),& && \\
&X_{i,j} := {}_{\rho_k^{j-1}}(X_i),& &Y_{i,j} := {}_{\rho_k^{j-1}}(Y_i)& 
\end{align*}
for $j=1,2,\ldots,k$. Note that $s_{h,k+1}=s_{h,1}$, $X_{i,k+1}=X_{i,1}$ and $Y_{i,k+1}=Y_{i,1}$. 
Then, Proposition~\ref{separatinggenush} and the primitive braid relations give 
\[t_{s_{h,j}}^{n} = [X_{1,j},Y_{1,j}][X_{2,j},Y_{2,j}] \cdots [X_{\left[\frac{|n|+3}{2}\right],j}, Y_{\left[\frac{|n|+3}{2}\right],j}].\]
for $j=1,2,\ldots,k$. 
Since $\rho_k^{j-1}(S_h^1)$ is disjoint from $\rho_k^{j^\prime-1}(S_h^1)$ if $j\neq j^\prime$, any elements $e_j$ in $\CM(\rho_k^{j-1}(S_h^1))$ and any elements $f_{j^\prime}$ in $\CM(\rho_k^{j^\prime-1}(S_h^1))$ satisfy $e_jf_{j^\prime}=f_{j^\prime}e_j$ from the commutative relations. 
Therefore, from Lemma~\ref{lemma:10}, we have 
\[t_{s_{h,1}}^{n} t_{s_{h,2}}^{n} \cdots t_{s_{h,k}}^{n} = [\mathcal{X}_1^\prime,\mathcal{Y}_1^\prime] [\mathcal{X}_2^\prime,\mathcal{Y}_2^\prime] \cdots [\mathcal{X}_{\left[\frac{|n|+3}{2}\right]}^\prime, \mathcal{Y}_{\left[\frac{|n|+3}{2}\right]}^\prime],\]
where $\mathcal{X}_i^\prime = X_{i,1} X_{i,2} \cdots X_{i,k}$ and $\mathcal{Y}_1^\prime = Y_{i,1} Y_{i,2} \cdots Y_{i,k}$. 
Moreover, Lemma~\ref{lemma:8} gives
\[ [ t_{s_{h,1}}^{n} t_{s_{h,2}}^{2n} \cdots t_{s_{h,k-1}}^{(k-1)n}, \rho_k] t_{s_{h,k}}^{kn} = [\mathcal{X}_1^\prime,\mathcal{Y}_1^\prime] [\mathcal{X}_2^\prime,\mathcal{Y}_2^\prime] \cdots [\mathcal{X}_{\left[\frac{|n|+3}{2}\right]}^\prime, \mathcal{Y}_{\left[\frac{|n|+3}{2}\right]}^\prime].\]
In particular, 
\[ t_{s_{h,k}}^{kn} = [ t_{s_{h,1}}^{n} t_{s_{h,2}}^{2n} \cdots t_{s_{h,k-1}}^{(k-1)n}, \rho_k]^{-1} [\mathcal{X}_1^\prime,\mathcal{Y}_1^\prime] [\mathcal{X}_2^\prime,\mathcal{Y}_2^\prime] \cdots [\mathcal{X}_{\left[\frac{|n|+3}{2}\right]}^\prime, \mathcal{Y}_{\left[\frac{|n|+3}{2}\right]}^\prime].\]
Since $X_{i,j},\mathcal{X}'_i$ (resp. $Y_{i,j},\mathcal{Y}'_i$) and $\rho_k$ satisfy the assumption of Lemma~\ref{lemma:21} from their definitions and the primitive braid relations, we obtain $\mathcal{X}_1^\prime \rho_k = \rho_k \mathcal{X}^\prime_1$ (resp. $\mathcal{Y}_1^\prime \rho_k = \rho_k \mathcal{Y}^\prime_1$). 
Note that $s_{h,j}$ is a boundary curve of $\rho_k^{j-1}(S_h^1)$ and that $s_{h,1},s_{h,2},\ldots,s_{h,k}$ are disjoint curves. 
By the property of boundary curves, the commutative relations and Lemma~\ref{lemma:10}, we have
\begin{align*}
[ t_{s_{h,1}}^{n} t_{s_{h,2}}^{2n} \cdots t_{s_{h,k-1}}^{(k-1)n}, \rho_k]^{-1} [\mathcal{X}_1^\prime,\mathcal{Y}_1^\prime] &= [\rho_k, t_{s_{h,1}}^{n} t_{s_{h,2}}^{2n} \cdots t_{s_{h,k-1}}^{(k-1)n}] [\mathcal{X}_1^\prime,\mathcal{Y}_1^\prime] \\
&= [\rho_k \mathcal{X}_1^\prime, t_{s_{h,1}}^{n} t_{s_{h,2}}^{2n} \cdots t_{s_{h,k-1}}^{(k-1)n} \mathcal{Y}_1^\prime], 
\end{align*}
and the proof is complete. 
\end{proof}

\end{document}